\theoremstyle{plain}
\newtheorem{theorem}{Theorem}[section]
\newtheorem{lemma}[theorem]{Lemma}
\newtheorem{proposition}[theorem]{Proposition}
\newtheorem{corollary}[theorem]{Corollary}
\newtheorem{remark}[theorem]{Remark}
\theoremstyle{definition}
\theoremstyle{remark}
\numberwithin{equation}{section}
\newcommand{\ep}{\varepsilon}
\newcommand{\e}{\varepsilon}
\newcommand{\ffi}{\varphi}
\newcommand{\R}{\mathbb{R}}
\newcommand{\N}{\mathbb{N}}
\newcommand{\Z}{\mathbb{Z}}
\newcommand{\E}{\mathcal{E}}
\newcommand{\F}{\mathcal{F}}
\newcommand{\ud}{\;\mathrm{d}}
\newcommand{\weakly}{\rightharpoonup}           % weak convergence
\newcommand{\weakstar}{\stackrel{*}{\weakly}}   % weakstarconvergence
\newcommand{\loc}{\mathrm{loc}}
\newcommand{\A}{\mathcal{A}}
\newcommand{\Am}{\mathcal{A}_{m_1,m_2}}
\newcommand{\Eab}{\E_K^{{c_{11}},{c_{22}}}}
\newcommand{\hs}{\mathcal{H}}
\newcommand{\I}{\mathcal{I}}
\newcommand{\J}{\mathcal{J}}
\newcommand{\dist}{\text{dist}}
\newcommand{\res}{\mathop{\hbox{\vrule height 7pt width .5pt depth 0pt
\vrule height .5pt width 6pt depth 0pt}}\nolimits}
\newcommand{\newatop}{\genfrac{}{}{0pt}{1}} 
\title[Two phases with cross and self attractive forces] {Ground states of a two phase model with cross and self attractive interactions}
\author[M. Cicalese]
{M. Cicalese}
\address[Marco Cicalese]{Zentrum Mathematik - M7, Technische Universit\"at  M\"unchen, Boltzmannstrasse 3, 85748 Garching, Germany}
\email[M. Cicalese]{cicalese@ma.tum.de}
\author[L. De Luca]
{L. De Luca}
\address[Lucia De Luca]{Zentrum Mathematik - M7, Technische Universit\"at  M\"unchen, Boltzmannstrasse 3, 85748 Garching, Germany}
\email[L. De Luca]{deluca@ma.tum.de}
\author[M. Novaga]
{M. Novaga}
\address[Matteo Novaga]{Dipartimento di Matematica, Universit\`a di Pisa, Largo Bruno Pontecorvo 5, 56127 Pisa, Italy}
\email[M. Novaga]{novaga@dm.unipi.it}
\author[M. Ponsiglione]
{M. Ponsiglione}
\address[Marcello Ponsiglione]{Dipartimento di Matematica ``Guido Castelnuovo'', Sapienza Universit\`a di Roma, P.le Aldo Moro 5, I-00185 Roma, Italy}
\email[M. Ponsiglione]{ponsigli@mat.uniroma1.it}
\begin{document}

\begin{abstract}
We consider a variational model for two interacting species (or phases), subject to cross and self attractive forces. We show existence and several qualitative properties of minimizers. Depending on the strengths of the forces, different behaviors are possible: phase mixing or phase separation with nested or disjoint phases. 
In the case of Coulomb interaction forces, we characterize the ground state configurations.
\end{abstract}

\maketitle
\tableofcontents

\section*{Introduction}

Models of two or more interacting species find applications in several fields of science, such as physics, chemistry and biology. To cite a few examples one may think about the formation of bacterial colonies in biology \cite{LB-JCR}, the self-assemble of nano-particles in physical chemistry \cite{LLPPTM}, the problem of two species group consensus \cite{EMV} as well as that of pedestrian dynamics \cite{CPT}. The basic feature of all these models is the presence of competing forces aiming to drive two phases towards different shapes. 

An interesting example of this phenomenon has been recently reported in \cite{LLPPTM}. There, it has been observed that, during the assembly process of two nano-scaled polyprotic macroions in a dilute aqueous solution, the system may be driven towards phase segregation as opposite to phase mixtures via a complex self-recognition mechanism involving multiple scales optimization. 

Far from thinking to propose realistic models for these complex mechanisms, we aim at reproducing such limit behaviors while keeping the number of parameters as small as possible. We propose and study a toy model for two interacting phases subject to self and cross attractive forces depending only on the distance between particles. Such a model may be introduced as follows. Two phases, represented by two subsets of $\R^N$, say $E_{1}$ and $E_{2}$, with masses $m_{1}$ and $m_2$ respectively, interact both with themselves and with the other phase trying to minimize an energy of the form
\begin{equation}\label{intro-energy}
\mathcal F(E_{1},E_2) = \sum_{i,j=1}^{2}J_{K_{ij}}(E_i,E_j). 
\end{equation}
Here
\begin{equation}\label{J}
J_{K_{ij}}(E_{i},E_{j}):=\int_{\R^{N}}\int_{\R^{N}} \chi_{E_i}(x)\,\chi_{E_j}(y)\,K_{ij}(x-y)\ud x\ud y
\end{equation} 
is a nonlocal interaction energy with interaction potential 
$K_{ij}:\R^{N}\to\R$.
Energy functionals of this type have been considered by many authors in the context of  nonlinear aggregation-diffusion problems, modeling biological swarming and crowd congestion (see \cite{WV,CCELM,CDFLS, DF, MKB, MRSV} and the references therein). 

%%%%%%%%%%%%%%%%%%%%
In the present paper we initiate the analysis of the ground states of the energy functional $\F$ assuming that for $i,j\in\{1,2\}$ the interaction forces,   still having  different intensities,  obey the same nonlocal law. More precisely, we consider $K\in L^{1}_{\loc}(\R^N;\R)$ a non-increasing radially symmetric interaction potential and restrict our analysis to those $K_{ij}=c_{ij}K$. Moreover, we assume that the interactions are attractive, i.e., $c_{i,j}\le 0$. Without this assumption, different phenomena may appear, related to loss of mass at infinity. As a consequence, 
the minimization problem is in general ill-posed,  and requires specific cares. One possibility would consist in  adding some confinement conditions. In \cite{BKR},  the authors propose a different kind of problem: they focus on the case $c_{11}=c_{22}=1$, $c_{12}+c_{21}=-2$, fix $E_{1}$ and study the minimization of \eqref{intro-energy} as a function of $E_{2}$. They prove that such a problem admits a solution if and only if $m_{2}\leq m_{1}$. Similar threshold phenomena appear in energetic models for di-block copolymers, where a confining perimeter term and  a repulsive force compete \cite{BC, DNRV, FFMMM, Ju, KM1,KM2,LO} as well as in attractive/repulsive Lennard-Jones-type models (see e.g., \cite{CCP, CCH, CFT, KHP,KSUB,   SST} and the references therein). 

Let us go back to the case of attractive interactions $c_{ij}\leq 0$ considered in this paper. We will see that, also in this case, the minimization problem above is actually ill-posed.  Indeed, in Proposition \ref{propmass} and Theorem \ref{00}, we will show that if $|{c_{11}}|,|{c_{22}}|$ are small enough, any minimizing sequence wants to mix the two phases. We are then led to consider a relaxed version of the problem above where the notion of phase is weakened to allow local mixing.  Now the phases are described in terms of their densities $f_{1}, f_{2}\in L^{1}(\R^N;[0,1])$, so that $\int_{\R^{N}}f_{i}(x)\ud x=m_{i}$ and the functional becomes
\begin{equation}\label{intro-energy-e}
\E_{K}(f_{1},f_{2})=c_{11}\,J_K(f_1,f_1)+c_{22}\,J_{K}(f_2,f_2)+(c_{12}+c_{21})\, J_{K}(f_1,f_2),
\end{equation}
where $J_{K}(f_i,f_{j})$ has the same form of \eqref{J} with $K$ and  $f_{i}$ in place of $K_{ij}$  and $\chi_{E_{i}}$, respectively. 

%We focus only on the self and cross non repulsive case, namely $c_{ij}\leq 0$ for any $i,j\in\{1,2\}$.
For all  masses $m_i>0$ and all $c_{ij}\leq 0$, we prove existence of the minimizers of $\E_K$  under the constraint $f_{1}+f_{2}\leq 1$ (Theorem \ref{esist}). Such a constraint is inherited by the original problem, naturally arising from the relaxation procedure, but has also a clear physical meaning. Indeed, if we interpret the densities $f_{i}$ as proportional to the number of particles per unit volume on a certain mesoscopic ball of a lattice gas model, the condition reflects the fact that two particles are not allowed to occupy the same elementary cell. Note that for a slightly different problem in the one dimensional case, a similar existence result has appeared in \cite{KSX}.

In the case $c_{12} = c_{21}=0$, our problem reduces to two independent one-phase problems given by
$$
\min_{\newatop{f_i\in L^{1}(\R^N;[0,1])}{\int_{\R^N}f_i(x)\ud x=m_i}} c_{ii}\,J_{K}(f_i,f_i)\quad\text{ for }i=1,2.
$$
If $c_{ii}<0$, it is well known that the minimizer above is (the characteristic funciton of) a ball having mass equal to $m_i$ (see \cite{Riesz, LL} or Lemma \ref{riesz}).
Therefore,  we focus on the case $c_{12}+c_{21}<0$. Clearly, by the scaling and symmetry properties of the energy, it is not restrictive to assume $c_{12}=c_{21}=-1$. With this interaction term in the energy the geometry of the phases becomes a more delicate issue and it drastically depends on the strength of the interaction constants $c_{11}$ and $c_{22}$. On one hand, if the cross interaction forces prevail, {\it phase mixing} occurs, that is, a new phase appears which is a combination of the two pure phases. On the other hand, if one of the two self interaction forces is sufficiently strong, {\it phase segregation} occurs, with the presence of two pure phases which can be nested or adjacent, depending on the strength of the other force. 
The latter behavior is in a certain sense reminiscent of clusters of two phases in an infinite ambient phase, minimizing an inhomogeneous perimeter functional with surface tension depending on the two touching phases \cite{AB}. In this case the mixing of phases is impossible but, depending on the strength of the surfaces tensions, minimizers may exhibit disjoint or nested phases \cite{MN}. 

Our analysis focuses also on qualitative properties of  solutions. In some cases, we have determined the explicit geometry of the phases of the minimizers. Such an analysis is almost complete for the Coulomb interaction kernel.    
%The table in Figure \ref{fig1} concerns to the case of a general non-increasing radially symmetric potential $K$,
%while the table in Figure \ref{fig2} gives the shapes of minimizers if the interaction kernel is the Coulomb one (see \eqref{colker} for its definition).

%\begin{figure}[h]
%\centering
%{\def\svgwidth{360pt}
%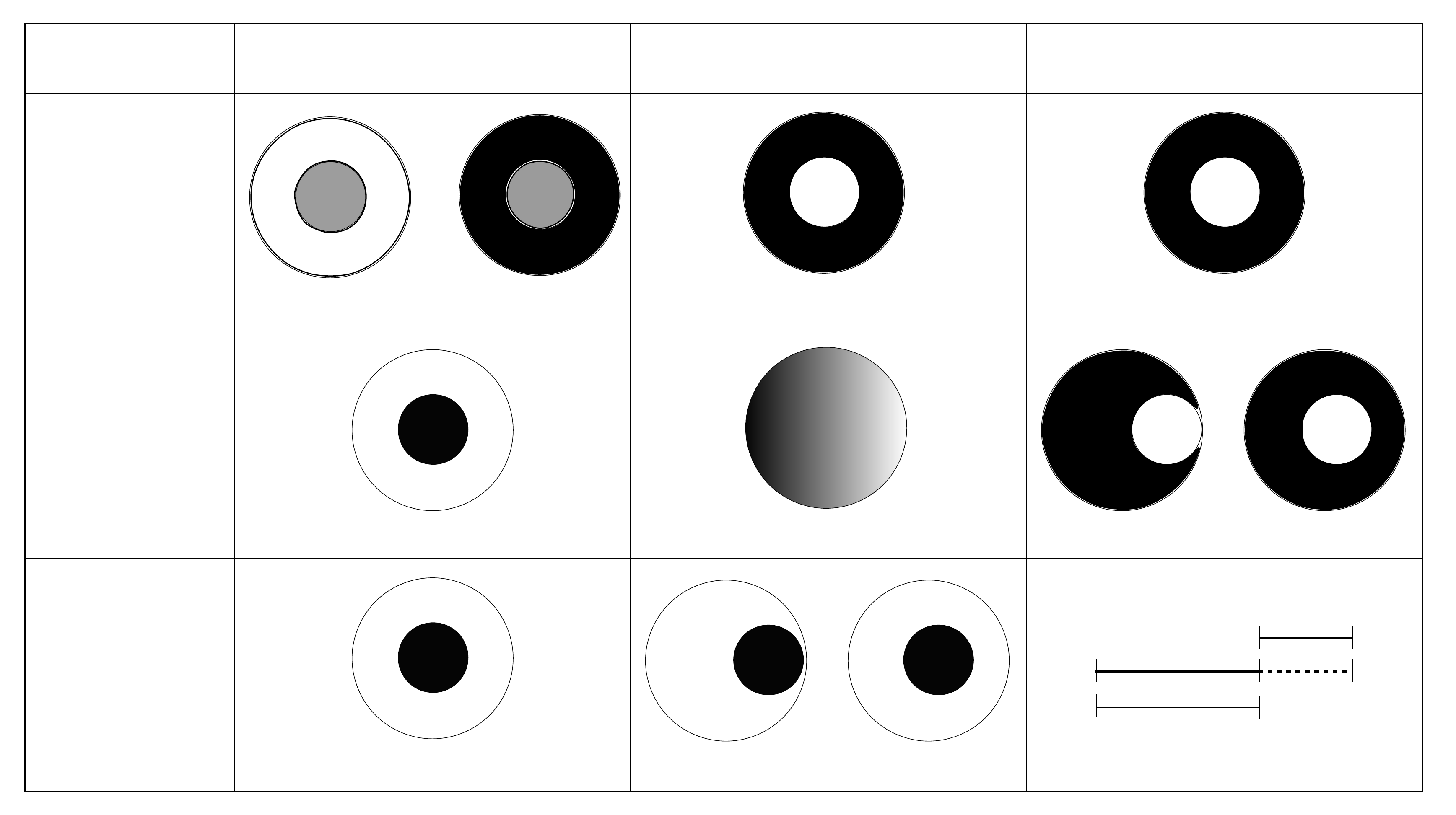}
%%\caption{  }
%\end{figure}

\begin{figure}[h]
\centering
{\def\svgwidth{360pt}
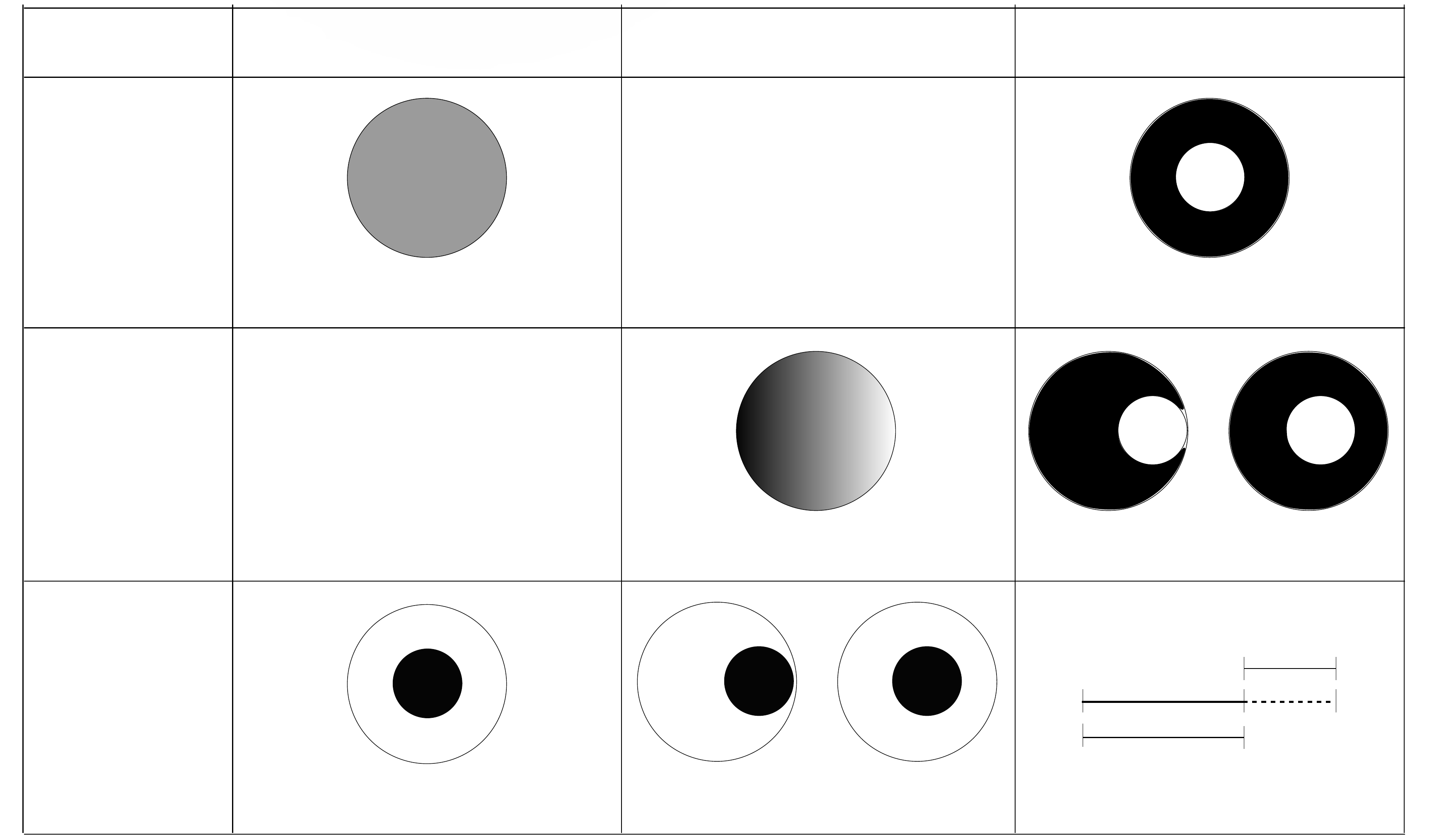}
\caption{
The phase $f_1$ is the black one, whereas the phase $f_2$ is white. The grey region represents the mixing of the two  phases. The gradational shaded ball in the central box represents the extremely degenerate character of minimizers for $c_{11}=c_{22}=-1$.
%: in this case, indeed, the minimizers are all the configurations $(f_1,f_2)$ with $f_1+f_2$ equal to the characteristic of the ball with mass $m_1+m_2$.
}
\label{fig1}
\end{figure}

We first describe the case of general kernels (see Figure \ref{fig1}).
First, consider the case $c_{11}+c_{22}> -2$, that we will call  the {\it weakly attractive case}. In this case,   the shape of minimizers is not explicit, except for $(c_{11}+1)\,m_1=(c_{22}+1)\,m_2$ and $K$ positive definite. If this occurs, the unique minimizer is given by $(f_1,f_2)=(\frac {m_1}{m_1+m_2}\chi_B,\frac {m_2}{m_1+m_2}\chi_B)$, where $B$ is a ball with $|B|=m_1+m_2$ (Proposition \ref{propmass}). 

The {\it strongly attractive case} $c_{11}+c_{22}\le -2$ (Theorem \ref{proptutto}) needs to be classified into the four subcases listed below. 
If $c_{11}=c_{22}=-1$, the problem is extremely degenerate, i.e., the minimizers are given by all the pairs $(f_1,f_2)$, with $f_1+f_2=\chi_{B}$.
If $c_{11}=-1$ and $c_{22}<-1$, then the minimizers of the problem are the pairs $(f_1,f_2)$, where ${f_1}+{f_{2}}=\chi_{B}$  and $f_2$  is (the characteristic function of) a ball  contained in $B$ (not necessarily concentric). If  $c_{22}<-1<c_{11}$, then the minimizer is unique and it is given by a ball and a concentric annulus around it.
Finally, for $c_{11},c_{22}<-1$, the minimizer is fully characterized only in the one dimensional case and it is given by the two tangent balls (namely segments).

As for the Coulomb interactions (see Figure \ref{fig2}), we have fully characterized the minimizers also in the weakly attractive case.
\begin{figure}[here]
\centering
{\def\svgwidth{360pt}
\input{tabellacoul.pdf_tex}}
\caption{
The phase $f_1$ is the black one,  the phase $f_2$ is white. The grey region represents the mixing of the two  phases. 
}
\label{fig2}
\end{figure}

We have proven (Theorem \ref{00}, Corollaries \ref{quadr1d} and \ref{minimoinquadrato}) that if $-1<c_{11}$, $c_{22}<0$ the minimizer is given by an interior ball in which $f_1$ and $f_2$ mix each other with specific volume fractions, according with their self attraction coefficients, and a concentric annulus where only the remaining homogeneous phase is present. If $c_{22}\le -1<c_{11}$, then the minimizer is unique and it is given by a ball and a concentric annulus around it.
In this respect,  for $c_{22}\le -1<c_{11}$ the solution is the same in the weakly and in the strongly attractive cases. 
  
Clearly, in the strongly attractive case the analysis done for general kernels applies in particular to the case of Coulomb interactions.
The shape of minimizers for $c_{11},c_{22}<-1$ is still open, but we can exclude the presence of a mixing phase (Proposition \ref{coroll1.5}).
The determination of the shape of the two phases in this case seems to be a challenging problem, that could be explored through numerical methods. 
Switching the roles of $c_{11}$, $c_{22}$, $f_1$, $f_2$  in the discussion above, 
the  description of minimizers extend  to all the other cases not explicitly mentioned.

We remark that the analysis for the Coulomb interaction kernel is much richer, since we can exploit  methods and tools of potential theory such as maximum principles. 
The characterization of minimizers in the weakly attractive case reduces to the case $c_{11} = c_{22} = 0$, considered in  Theorem \ref{00}. Even if   the two phases interact only through a cross attractive force,  this case turns out to be  non trivial.  The strategy to tackle this problem is based on a rearrangement argument that resembles the Talenti inequality. This is the content of  Lemma \ref{rearr0}, which establishes that, given a charge configuration $f$ which generates a potential $V$, one can rearrange the masses  on every superlevel of $V$, so that the new potential turns out to be greater than the radially symmetric rearrangement $V^*$ of $V$.  

%\begin{figure}
%\includegraphics[width=0.8\textwidth]{notransition}
%\caption{Connecting the chiralities $q_{1}$ and $q_{2}$ by slowly moving the spin rotation axis}\label{Fig.1}
%\end{figure}

%The latter behavior is in a certain sense reminiscent of clusters of two phases in an infinite ambient phase, minimizing an inhomogeneous perimeter functional with surface tension depending on the two touching phases \cite{AB}. In this case the mixing of phases is impossible but, depending on the strength of the surfaces tensions, minimizers may exhibit disjoint or nested phases \cite{MN}. 
%
%\textcolor{red}{non mi e' chiaro come citare Ren e Wei perch\'e il loro modello e' ternario e per ridurlo al nostro si dovrebbero considerare le densit� relative che non sono positive...avete in mente un modo vago per farlo, se dobbiamo?}
%
%As a further interesting direction of research we point out that, even if ground states play a crucial role in the long time asymptotics of nonlinear aggregation-diffusion models, a better understanding of such problems would suggest the analysis of stationary states for energies in \eqref{intro-energy}. This issue is left to future investigations.
%
%\smallskip

The plan of the paper is the following. In Section \ref{secvar}
we introduce the nonlocal model and we prove existence and 
compactness of minimizers. In Section \ref{secqual} we show some qualitative properties of minimizers  and we characterize them explicitly in some  strongly attractive cases. Eventually, in Section \ref{seccoul} we study in detail the case of Coulomb interactions. 
%In this case we provide explicit solutions for all choices of coefficients, except when both $c_{11}<-1$ and $c_{22}<-1$.

\section{The variational problem}\label{secvar}

In this section we state our variational problem,  proving existence and some qualitative properties of the minimizers. 

\subsection{Description of the model}

We first introduce a functional modeling the interaction between two non-self-repulsive and mutually attractive species.

Let $N\in\N$  and let $K:\R^N\to\R$ be a non-increasing radially symmetric interaction potential, with  $K\in L^{1}_{\loc}(\R^N)$. 
For any pair of measurable sets $(A,B)$ with finite measure, we set
\begin{equation}\label{defJ}
J_K(A,B):=\int_{A}\int_{B}K(x-y)\ud x\ud y
\end{equation}
and we notice that, by the assumptions on $K$, the functional $J_K$ is well defined and takes values in $\R\cup\{-\infty\}$.

Given ${c_{ij}}\le 0$ for $i,j=1,2$ and  $m_1, \, m_2 >0$, we are interested in finding the minimizers of the functional
\begin{equation}\label{effe}
\mathcal{F}_K(E_1,E_2):={c_{11}}\,J_K(E_1,E_1)+{c_{22}}\,J_K(E_2,E_2)+(c_{12}+c_{21})\,J_K(E_1,E_2)
\end{equation}
among all the pairs of measurable sets $(E_1,E_2)$ with $|E_1|=m_1$ and $|E_2|=m_2$. Here $E_1$ and $E_2$ represent two species with masses $m_1$ and $m_{2}$ respectively, $c_{11},c_{22}$  the autointeraction and $c_{12}+c_{21}$ the cross-interaction coefficients.

As mentioned in the Introduction, for $c_{12}+c_{21}=0$ the problem decouples into two independent minimization problems, one for each phase. These are of the form 
$$
\min\left\{ -J_{K}(E,E): |E|=m\right\}.
$$
By the Riesz inequality \cite{Riesz} (see Lemma \ref{riesz}), such a one-phase problem is well known to be solved by a ball \cite{LL}.
As a consequence we focus on the case $c_{12}+c_{21}<0$ and furthermore, without loss of generality,  we set $c_{12}+c_{21}=-2$.
From Proposition \ref{propmass} and Theorem \ref{00}, it will follow that
 if $|{c_{11}}|,|{c_{22}}|$ are small enough, the minimum problem above does not admit in general a minimizer. Roughly speaking, the reason is that, in some cases, any minimizing sequence wants to mix the two phases. As a result, we are led to consider a {\it relaxed problem}. More precisely, 
according with \eqref{defJ}, for any $f_1,\,f_2\in L^{1}(\R^N)$ we set
$$
J_K(f_1,f_2):=\int_{\R^N}\int_{\R^N}f_1(x)\,f_2(y)\,K(x-y)\ud x\ud y.
$$
Then, we consider the functional $\Eab: L^1(\R^N;\R^+) \times L^1(\R^N;\R^+)\to\R\cup\{+\infty\}$ defined by
\begin{equation}\label{energy}
\Eab(f_1,f_2) = {c_{11}}\,J_K(f_1,f_1) + {c_{22}}\,J_K(f_2,f_2) -2\,J_K(f_1,f_2).
\end{equation}
%we consider the functional $\Em: L^1(\R^N;\R^+) \times L^1(\R^N;\R^+)\to\R\cup{+\infty}$ defined by
%\begin{multline}\label{energy}
%\Em(f_1,f_2):={c_{11}}  \int_{\R^N}\int_{\R^N} f_1(x)\,f_1(y)\,K(x-y)\ud x\ud y\\
%+{c_{22}} \int_{\R^N}\int_{\R^N}f_2(x)\,f_2(y)\,K(x-y)\ud x\ud y
%+\gamma \int_{\R^N}\int_{\R^N}f_1(x)\,f_2(y)\,K(x-y)\ud x\ud y.
%\end{multline}
We introduce the class of admissible densities $\Am$ defined by
 \begin{multline}\label{admissible}
 \A_{m_1,m_2}:= \left\{(f_1,f_2) \in L^1(\R^N;\R^+) \times L^1(\R^N;\R^+): \right.\\  
\left. \int_{\R^N} f_i(x) \ud x = m_i \text{ for } i=1,2, \, 
f_1(x)+f_2(x)\le 1\,\text{ for a.e. } x\in\R^N \right\}.
 \end{multline}
It is easy to see that for any $(f_1,f_2)\in\Am$ 
$$
\Eab(f_1,f_2)=\inf\liminf_{n\to\infty} \mathcal{F}_K(E^n_1,E^n_2),
$$
where the infimum is taken among all sequences $\{E_i^n\}$ ($i=1,2$) with $|E_i^n|=m_i$ and such that $\chi_{E_i^n}$ converge tightly to $f_i$.
We also observe that, if the kernel $K$ is bounded  at infinity, then  the energy is continuous with respect to tight convergence: if $f_i^n \weakstar f_i$ and $\|f_i^n\|_1\to \|f_i\|_1$ for $i=1,2$, then  $\Eab(f_1^n,f_2^n) \to
 \Eab(f_1,f_2)$.
  
For $i=1,2$, set $V_i:=f_i\ast K$, so that we can write
\begin{eqnarray}\nonumber
\Eab(f_1,f_2)&=&{c_{11}}\int_{\R^N}f_1(x)V_1(x)\ud x+{c_{22}}\int_{\R^N}f_2(x)V_2(x)\ud x\\ \nonumber
&&-2\int_{\R^N}f_1(x)V_2(x)\ud x\\
\label{energyV}
&=&{c_{11}}\int_{\R^N}f_1(x)V_1(x)\ud x+{c_{22}}\int_{\R^N}f_2(x)V_2(x)\ud x\\ \nonumber
&&-2\int_{\R^N}f_2(x)V_1(x)\ud x.
\end{eqnarray}
We now recall the definitions of the main classes of kernels we will focus on.
We say that the kernel $K$ is {\it positive definite} if 
\begin{eqnarray}\label{posdefk}
&&J_{K}(\ffi,\ffi)\ge 0\,\,\,\forall\ffi\in L^{1}(\R^N)\,\text{ and }\\ \nonumber
&&\text{$J_K(\ffi,\ffi)=0$ if and only if $\ffi= 0$ a.e. in $\R^N$.}
\end{eqnarray}
We denote by $K_{C_N}$ the Coulomb kernel in $\R^N$, defined by
\begin{equation}\label{colker}
K_{C_N}(x):= \left\{\begin{array}{ll}
-\dfrac 12\, |x|&\text{for }N=1,\\
\\
-\dfrac{1}{2\pi}\log|x|&\text{for }N=2,\\
\\
\dfrac{1}{(N-2)\,\omega_N}\,\dfrac{1}{|x|^{N-2}}&\text{for }N\ge 3,
\end{array}\right.
\end{equation}
where $\omega_{N}$ is the $N$-dimensional measure of the unitary ball in $\R^{N}$.
By definition, $-\Delta K_{C_N}=\delta_0$ for any $N$ so that $-\Delta V_i(x)=f_i(x)$.
In the following Remark we list some properties of the Coulomb kernels that will be useful in the following.

\begin{remark}\label{posdef}
\rm{By \cite[Theorem 1.15]{La} $K_{C_N}$ is positive definite for $N\ge 3$ but not for $N=1,2$. Nevertheless, by \cite[Theorem 1.16]{La}, for any $\ffi\in L^{1}(\R^2)$ with compact support and $\int_{\R^2}\ffi(x)\ud x=0$, we have 
$$
J_{K_{C_2}}(\ffi,\ffi)\ge 0,
$$
where equality holds true if and only if $\ffi(x)=0$ for a.e. $x\in\R^{2}$. Moreover, it is easy to see that the same result holds true also for $K_{C_1}$.}
\end{remark}

%%%%%%%%%%%%%%%%%%%%%%%%%%%%%%%%%%%%%%%%%%%%%%%%%%
%%%%%%%%%%%%%%%%%%%%%%%%%%%%%%%%%%%%%%%%%%%%%%%%%%%

%%%%%%%%%%%%%%%%%%%%%%%%%%%%%%%%%%%%%%%%%%%%%%%%%%%%%
%%%%%%%%%%%%%%%%%%%%%%%%%%%%%%%%%%%%%%%%%%%%%%%%%%%%%
\subsection{First and second variations}\label{subfv}

For any given $(f_1,f_2)\in \Am$ set
\begin{equation}\label{defac}
G_i:= \{x\in\R^N:\,0<f_i(x)<1\}, \quad F_i:=   \{x\in\R^N:\,f_i(x)=1\}, \quad i=1,2.
\end{equation}
Moreover, we set
\begin{equation}\label{defs}
S:= \{x\in\R^N:\, f_1(x)+f_2(x)=1\}. 
\end{equation}

\begin{lemma}[{\bf First variation}]\label{primolemma}
Let $(f_1,f_2)$ be a minimizer of $\Eab$ in $\Am$. Let  $i, j\in\{1,2\}$ with $i\neq j$.
For any $\ffi_i,\psi\in L^{1}(\R^N;\R^+)$ with $\ffi_i=0$ a.e. in $\R^N\setminus (G_i\cup F_i)$, $\psi=0$ a.e. in $S$, and $\int_{\R^N}\ffi_i(x)\ud x=\int_{\R^N}\psi(x)\ud x$, we have
\begin{equation}\label{primaprimolemma}
\int_{\R^N}(\psi(x)-\ffi_i(x))({c_{ii}} V_i(x)-V_j(x))\ud x\ge 0.
\end{equation}

%For any $\ffi_1,\psi\in L^{1}(\R^N;\R^+)$ with $\ffi_1=0$ a.e. in $\R^N\setminus (G_1\cup F_1)$, $\psi=0$ a.e. in $S$, and $\int_{\R^N}\ffi_1(x)\ud x=\int_{\R^N}\psi(x)\ud x$, we have
%\begin{equation}\label{primaprimolemma}
%\int_{\R^N}(\psi(x)-\ffi_1(x))({c_{11}} V_1(x)-V_2(x))\ud x\ge 0.
%\end{equation}
%Analogously, for any $\ffi_2\in L^{1}(\R^N;\R^+)$ with $\ffi_2=0$ a.e. in $\R^N\setminus (G_2\cup F_2)$, $\psi$ as above, and $\int_{\R^N}\ffi_2(x)\ud x=\int_{\R^N}\psi(x)\ud x$, we have
%\begin{equation}\label{secondaprimolemma}
%\int_{\R^N}(\psi(x)-\ffi_2(x))({c_{22}} V_2(x)-V_1(x))\ud x\ge 0.
%\end{equation}
As a consequence, 
\begin{equation}\label{sommaconst}
{c_{ii}} V_i - V_j=\gamma_{i}\text{ a.e. in } G_i\setminus S.
\end{equation}
for some constant $\gamma_i\in\R$.

%\begin{equation}\label{sommaconst}
%{c_{11}} V_1 - V_2=c_1\text{ a.e. in } G_1\setminus S ,\quad
%{c_{22}} V_2 - V_1=c_2\text{ a.e. in } G_2 \setminus S,
%\end{equation}
%for some constants $c_1,c_2\in\R$.

\end{lemma}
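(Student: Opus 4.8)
The plan is to obtain \eqref{primaprimolemma} from a one–sided first variation of $\Eab$ in the $i$-th slot, and then to read off \eqref{sommaconst} from the fact that points of $G_i\setminus S$ are simultaneously available for adding and for removing mass.

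\emph{First variation.} Given $\ffi_i,\psi$ as in the statement, I set $\eta:=\psi-\ffi_i$ and, keeping $f_j$ fixed, I consider the competitor obtained by replacing $f_i$ with $f_i+t\eta$ for $t>0$ small. Since $\int_{\R^N}\ffi_i=\int_{\R^N}\psi$ the mass constraint $\int_{\R^N}(f_i+t\eta)=m_i$ is preserved. The two pointwise constraints are respected, for $t$ small, once $\ffi_i$ is additionally bounded and supported in $\{f_i\ge\delta\}$ and $\psi$ is bounded and supported in $\{f_1+f_2\le 1-\delta\}$: then $f_i+t\eta\ge 0$ and $(f_i+t\eta)+f_j\le 1$ for all $t$ below a threshold depending on $\delta,\|\ffi_i\|_\infty,\|\psi\|_\infty$. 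Expanding the quadratic functional by bilinearity of $J_K$ and using that $K$ is even, so that $J_K(f_i,\eta)=\int_{\R^N}\eta\,V_i$ and $J_K(\eta,f_j)=\int_{\R^N}\eta\,V_j$ (the cross term being symmetric in $i,j$, the computation is the same for $i=1$ and $i=2$), gives
\begin{equation*}
\Eab(f_i+t\eta,f_j)=\Eab(f_i,f_j)+2t\int_{\R^N}\eta(x)\,({c_{ii}}V_i(x)-V_j(x))\ud x+t^2\,{c_{ii}}\,J_K(\eta,\eta).
\end{equation*}
Dividing the nonnegative increment by $t>0$ and letting $t\to 0^+$ yields \eqref{primaprimolemma} for this restricted class of test functions.

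\emph{Removing the auxiliary restrictions.} To reach the full statement I would approximate a general pair $\ffi_i,\psi$ from below by the truncations $\ffi_i\wedge n$ restricted to $\{f_i\ge 1/n\}\cap B_n$ and $\psi\wedge n$ restricted to $\{f_1+f_2\le 1-1/n\}\cap B_n$, rescaling one of the two by a factor tending to $1$ so as to keep their integrals equal. Each truncated pair is of the restricted type above, so \eqref{primaprimolemma} holds along the approximation; passing to the limit by dominated convergence (the relevant integrals being finite since $0\le f_i\le 1$ with finite mass makes $V_i$ locally integrable and the competitors have finite mass) gives \eqref{primaprimolemma} in general. This approximation bookkeeping, rather than the variational computation itself, is the only genuinely delicate point.

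\emph{The constancy \eqref{sommaconst}.} Since $G_i\setminus S\subset(G_i\cup F_i)\cap(\R^N\setminus S)$, any nonnegative $g\in L^{1}(\R^N)$ supported in $G_i\setminus S$ is an admissible choice both for $\ffi_i$ and for $\psi$. Hence, given $g_1,g_2\ge 0$ supported in $G_i\setminus S$ with $\int_{\R^N}g_1=\int_{\R^N}g_2$, applying \eqref{primaprimolemma} once with $(\ffi_i,\psi)=(g_1,g_2)$ and once with the roles exchanged yields
\begin{equation*}
\int_{\R^N}(g_2(x)-g_1(x))\,({c_{ii}}V_i(x)-V_j(x))\ud x=0.
\end{equation*}
Writing $h:={c_{ii}}V_i-V_j$, this says $h$ has the same average against every probability density supported in $G_i\setminus S$. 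If $h$ were not a.e. constant there, one could find reals $a<b$ with $A:=\{h<a\}\cap(G_i\setminus S)$ and $B:=\{h>b\}\cap(G_i\setminus S)$ both of positive measure; choosing $g_1=\chi_A/|A|$ and $g_2=\chi_B/|B|$ would force the integral above to exceed $b-a>0$, a contradiction. Therefore $h\equiv\gamma_i$ a.e. in $G_i\setminus S$ for some $\gamma_i\in\R$, which is \eqref{sommaconst}. Throughout, the algebra of the first variation and the final constancy argument are routine; the main obstacle is the admissibility/approximation step of the second paragraph.
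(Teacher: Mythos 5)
Your proof is correct and follows essentially the same route as the paper's: you perturb $f_i$ by $t(\psi-\ffi_i)$ using test functions supported where $f_i\ge\delta$ and $f_1+f_2\le 1-\delta$ (the paper's sets $A^\ep$ and $B^\ep$), compute the first variation of the quadratic functional, remove the restrictions by an $L^1$-approximation with renormalized masses, and obtain \eqref{sommaconst} by letting functions supported in $G_i\setminus S$ play both the role of $\ffi_i$ and of $\psi$. The only difference is presentational: you spell out the truncation bookkeeping and the final averaging argument for constancy, which the paper compresses into a ``without loss of generality $L^\infty$'' reduction and a one-line role-switching remark.
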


\begin{proof}
To simplify notation we prove the claim for $i=1$ and $j=2$. The proof of the other case can be obtained by switching $f_1$ with $f_2$ and ${c_{11}}$ with ${c_{22}}$. Without loss of generality, we assume $\ffi_1,\psi\in L^{\infty}(\R^N;\R^+)$.
For any $\ep>0$, we set 
$$
A^\ep:=\{x\in G_1 \cup F_1:\,\ep<f_1(x)\le 1\},\,\,\,B^\ep:=\{x\in \R^N:\,f_1(x)+f_2(x)< 1-\ep\}.
$$
It is easy to see that $A^\ep\nearrow (G_1\cup F_1)$, $B^\ep\nearrow (\R^N\setminus S)$.
Set 
$$
\ffi_{1}^{\ep}:=\frac{\int_{\R^N}\ffi_1(x)\ud x}{\int_{A^\ep}\ffi_1(x)\ud x}\cdot \ffi_1\res{A^\ep},\,\,\,\psi^{\ep}:=\frac{\int_{\R^N}\psi(x)\ud x}{\int_{B^\ep}\psi(x)\ud x}\cdot\psi\res{B^\ep};
$$
then $\|\ffi_1^{\ep}-\ffi_1\|_{L^1}\to 0$ and  $\|\psi^{\ep}-\psi\|_{L^1}\to 0$. 
For $t>0$ small enough,  $(f_1+t(\psi^{\ep}-\ffi_1^{\ep}),f_2)\in \Am$
and, since $(f_1,f_2)$ is a minimizer for $\Eab$, we have:
\begin{multline*}
0\le \lim_{t\to 0}\frac{\Eab(f_1+t(\psi^\ep-\ffi_1^\ep),f_{2})-\Eab(f_1,f_2)}{t}\\
=\int_{\R^N}2(\psi^\ep(x)-\ffi_1^\ep(x))\,({c_{11}} V_1(x)-V_2(x)) \ud x.
\end{multline*}
As $\ep\to 0$, we get the claim.

Finally, taking  $\ffi_1=\psi\equiv 0$ in $S\setminus G_1$ we are allowed to switch the roles of $\psi$ and $\ffi_1$ in \eqref{primaprimolemma}, obtaining \eqref{sommaconst}. % The proof of the second identity follows the same lines.
% and left to the reader.
\end{proof}

From now on, given any subset $E$ of $\R^N$, we will always assume that $E$ coincides with the set of the Lebesgue points of its characteristic function. In this way, $\partial E$ will be well defined and will always refer to this precise representative of $E$.

\begin{corollary}\label{coroll1}
Let $(f_1,f_2)$ be a minimizer of $\Eab$ in $\Am$. Then, for any $\ffi_1,\ffi_2\in L^{1}(\R^N;\R^+)$ with $\ffi_i=0$ a.e. in $\R^N\setminus (G_i\cup F_i)$ for $i=1,2$, and $\int_{\R^N}\ffi_1(x)\ud x=\int_{\R^N}\ffi_2(x)\ud x$, we have
\begin{equation}\label{scambio}
\int_{\R^N}(\ffi_2(x)-\ffi_1(x))(({c_{11}}+1) V_1(x)-({c_{22}}+1)V_2(x))\ud x\ge 0.
\end{equation}

In particular, for any $x_1\in \overline{G_1\cup F_1}$ and $x_2\in \overline{G_2\cup F_2}$, we have
\begin{equation}\label{ineq}
({c_{11}}+1)V_1(x_1)-({c_{22}}+1)V_2(x_1)\le({c_{11}}+1)V_1(x_2)-({c_{22}}+1)V_2(x_2).
\end{equation}

Moreover, 
\begin{equation}\label{diffconst}
({c_{11}}+ 1)V_1-({c_{22}} +1)V_2=\gamma
\qquad \text{ a.e.  in } G_1\cap G_2,
\end{equation}
for some constant $\gamma\in\R$.
\end{corollary}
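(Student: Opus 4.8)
The three claims all follow from a single admissible perturbation: a \emph{swap} that transfers mass of one species into the region occupied by the other, at fixed total density. Given $\ffi_1,\ffi_2$ as in the statement (first reduced to bounded functions, exactly as in the proof of Lemma~\ref{primolemma}), set $g:=\ffi_2-\ffi_1$ and consider, for small $t>0$,
\[
(f_1^t,f_2^t):=(f_1+t\,g,\ f_2-t\,g)=(f_1-t\ffi_1+t\ffi_2,\ f_2+t\ffi_1-t\ffi_2).
\]
Since $f_1^t+f_2^t=f_1+f_2\le 1$, the upper constraint is preserved automatically, and since $\int\ffi_1=\int\ffi_2$ both masses are unchanged. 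For nonnegativity one argues as in Lemma~\ref{primolemma}: replace $\ffi_i$ by its truncation $\ffi_i^\ep$ to $\{f_i>\ep\}\cap\{\ffi_i\neq 0\}$, renormalized so that $\int\ffi_1^\ep=\int\ffi_2^\ep$ (recall $\{f_i>0\}=G_i\cup F_i$). On the support of $\ffi_i^\ep$ one has $f_i>\ep$, so $f_1^t,f_2^t\ge 0$ for $t$ small, whence $(f_1^t,f_2^t)\in\Am$.

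Using the bilinearity and symmetry of $J_K$ and $V_i=f_i\ast K$, a direct computation gives
\[
\lim_{t\to 0^+}\frac{\Eab(f_1^t,f_2^t)-\Eab(f_1,f_2)}{t}=2\int_{\R^N} g(x)\,\big(({c_{11}}+1)V_1(x)-({c_{22}}+1)V_2(x)\big)\ud x,
\]
the only bookkeeping being that the self-term and the cross-term combine into $2{c_{11}}\!\int gV_1+2\!\int gV_1=2({c_{11}}+1)\!\int gV_1$, and analogously $-2({c_{22}}+1)\!\int gV_2$ for the second potential. Minimality forces this derivative to be $\ge 0$; writing $g=\ffi_2^\ep-\ffi_1^\ep$ and letting $\ep\to 0$ yields \eqref{scambio}.

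To pass to the pointwise statement set $W:=({c_{11}}+1)V_1-({c_{22}}+1)V_2$ and read \eqref{scambio} as $\int\ffi_2\,W\ge\int\ffi_1\,W$ for all admissible test pairs of common mass $M>0$. Since $\ffi_1$ ranges over arbitrary nonnegative densities on $\{f_1>0\}=G_1\cup F_1$ and $\ffi_2$ over those on $\{f_2>0\}=G_2\cup F_2$, taking the infimum over $\ffi_2$ and the supremum over $\ffi_1$ shows that \eqref{scambio} is equivalent to the separation
\[
\operatorname*{ess\,sup}_{G_1\cup F_1} W\ \le\ \operatorname*{ess\,inf}_{G_2\cup F_2} W.
\]
Because the potentials $V_i=f_i\ast K$ are continuous (as $f_i\in L^1\cap L^\infty$), $W$ is continuous, so these essential bounds are attained as genuine pointwise bounds at every point of the closures $\overline{G_1\cup F_1}$ and $\overline{G_2\cup F_2}$; this is precisely \eqref{ineq}. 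Finally, on $G_1\cap G_2\subset\{f_1>0\}\cap\{f_2>0\}$ one has a.e. both $W\le\operatorname*{ess\,sup}_{G_1\cup F_1}W$ and $W\ge\operatorname*{ess\,inf}_{G_2\cup F_2}W$, which together with the separation above pin $W$ to the single value $\gamma:=\operatorname*{ess\,sup}_{G_1\cup F_1}W$ a.e., giving \eqref{diffconst}.

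The variation and the truncation are routine, mirroring Lemma~\ref{primolemma}. The step requiring care is the upgrade from the integral inequality \eqref{scambio} to the pointwise inequality \eqref{ineq}: it rests on the arbitrariness of the one-sided test densities (to obtain the $\mathrm{ess\,sup}/\mathrm{ess\,inf}$ separation) together with continuity of the potentials, the latter being what replaces ``a.e.'' bounds by values on the closures. For singular kernels such as Coulomb, this continuity must be invoked explicitly.
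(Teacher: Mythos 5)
Your proof is correct and follows essentially the same route as the paper: the core step in both is the swap variation $(f_1+t(\ffi_2-\ffi_1),\, f_2-t(\ffi_2-\ffi_1))$, which preserves the constraint $f_1+f_2\le 1$ automatically, combined with the truncation device of Lemma~\ref{primolemma} to keep the perturbation admissible. The only cosmetic difference is how the pointwise claims are extracted from \eqref{scambio}: you use an ess-sup/ess-inf separation together with continuity of the $V_i$, whereas the paper obtains \eqref{diffconst} by switching the roles of $\ffi_1$ and $\ffi_2$ when both are supported in $G_1\cap G_2$ --- the two arguments are equivalent.
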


\begin{proof}
Notice that if $\ffi_i=0$ a.e. in $\R^N\setminus G_i$ (for $i=1,2$), \eqref{scambio} is obtained
by summing \eqref{primaprimolemma} for $i=1, j=2$ with $\psi=\ffi_2$  and for $i=2, j=1$ with $\psi=\ffi_1$. 
To treat the general case, it is enough to consider the variation $(f_1+t(\ffi_2-\ffi_1),f_2-t(\ffi_2-\ffi_1))\in\Am$ for $t$ small enough. Then \eqref{scambio} and \eqref{ineq} are equivalent  to  the fact that the first variation of the energy is nonnegative.% the details are left to the reader.

Finally, taking $\ffi_1,\ffi_2\in L_{c}^{\infty}(\R^N;\R^+)$, with $\ffi_1=\ffi_2=0$ a.e. in $\R^N\setminus (G_1\cap G_2)$ and $\int_{\R^N}\ffi_1(x)\ud x=\int_{\R^N}\ffi_2(x)\ud x$, we have that \eqref{scambio} holds true also switching $\ffi_1$ 
with $\ffi_2$, whence we get \eqref{diffconst}.
\end{proof}

Arguing as in the proof of Corollary \ref{coroll1} (or using \eqref{primaprimolemma} and exploiting the continuity of $V_i$),
one can easily prove the following stationarity equations
for the boundaries of the two phases (see also \cite[Eqs. (1.2)--(1.4)]{RW}
for similar conditions in a related model for triblock copolymers).

\begin{corollary}\label{coroll2}
Let $(f_1,f_2)$ be a minimizer of $\Eab$ in $\Am$ and assume
that $f_i=\chi_{E_i}$ for some sets $E_i\subset\R^2$.
% with Lipschitz boundary.
Then, the following equalities hold:
\begin{eqnarray}\label{eqqu}
{c_{11}} V_1 - V_2 &=& c_1 
\qquad \qquad \text{ on } \partial E_1\setminus \partial E_2
\\ \label{eqqa}
{c_{22}} V_2 - V_1 &=& c_2 
\qquad \qquad \text{ on } \partial E_2\setminus \partial E_1
\\ \label{eqqe}
({c_{11}}+ 1)V_1-({c_{22}} +1)V_2&=&c_1-c_2
\qquad \text{ on } \partial E_1\cap\partial E_1\,,
\end{eqnarray}
for some $c_1,\,c_2\in\R$.
\end{corollary}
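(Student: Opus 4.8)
The plan is to specialize the first‑variation inequality \eqref{primaprimolemma} (and, for the mixed interface, the exchange inequality \eqref{scambio}) to the case $f_i=\chi_{E_i}$, and to extract the pointwise boundary relations by localizing the competitor densities near prescribed boundary points, exactly as in the proof of Corollary \ref{coroll1}, exploiting the continuity of $V_1$ and $V_2$. First I would record the structural facts forced by $f_i=\chi_{E_i}$: since each $f_i$ takes only the values $0$ and $1$, the mixing set $G_i$ is (up to a null set) empty and $G_i\cup F_i=E_i$; moreover the constraint $f_1+f_2\le 1$ forces $|E_1\cap E_2|=0$, so $S=E_1\cup E_2$ and the admissible source/sink region $\R^N\setminus S$ is the vacuum $\R^N\setminus(E_1\cup E_2)$. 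I would also use that $V_i=\chi_{E_i}\ast K$ is continuous, since $\chi_{E_i}\in L^\infty$ and $K\in L^1_{\loc}(\R^N)$, so translation continuity in $L^1$ transfers to the convolution.

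For \eqref{eqqu} I would apply \eqref{primaprimolemma} with $i=1$, $j=2$: here $\ffi_1$ must be supported in $E_1$ and $\psi$ in the vacuum. Given two points $x_1,x_2\in\partial E_1\setminus\partial E_2$, each is simultaneously a density point of $E_1$ and of the vacuum (it lies on $\partial E_1$ but away from $E_2$). I would place $\psi$ as a normalized density concentrating at $x_1$ on the vacuum side and $\ffi_1$ as a normalized density concentrating at $x_2$ on the $E_1$ side, with equal masses; letting the supports shrink and using continuity, \eqref{primaprimolemma} gives $({c_{11}}V_1-V_2)(x_1)\ge({c_{11}}V_1-V_2)(x_2)$. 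Exchanging the roles of $x_1,x_2$ yields the reverse inequality, so ${c_{11}}V_1-V_2$ equals a constant $c_1$ on $\partial E_1\setminus\partial E_2$. Relation \eqref{eqqa} follows verbatim after interchanging the indices $1\leftrightarrow 2$, which defines $c_2$.

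For \eqref{eqqe} a point $z\in\partial E_1\cap\partial E_2$ need not see the vacuum, so I would pin its value by coupling it to the two vacuum interfaces. Fixing $y_1\in\partial E_1\setminus\partial E_2$ and $y_2\in\partial E_2\setminus\partial E_1$, I consider the single mass‑preserving, constraint‑admissible perturbation that removes phase $1$ into the vacuum near $y_1$, converts phase $2$ into phase $1$ near $z$ (raising $f_1$, lowering $f_2$ so their sum is unchanged), and adds phase $2$ from the vacuum near $y_2$; the three mass increments cancel, so $m_1,m_2$ are preserved. Writing the first variation as $\delta\Eab=2\int({c_{11}}V_1-V_2)\,\delta f_1+2\int({c_{22}}V_2-V_1)\,\delta f_2\ge 0$ and localizing, the $y_1$ and $y_2$ contributions are $c_1$ and $c_2$ by the previous step, giving $({c_{11}}+1)V_1(z)-({c_{22}}+1)V_2(z)\ge c_1-c_2$; the opposite perturbation (add phase $1$ from the vacuum at $y_1$, convert phase $1$ into phase $2$ at $z$, remove phase $2$ into the vacuum at $y_2$) gives the reverse inequality, which is \eqref{eqqe}.

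The main obstacle is the localization step: I must ensure the competitor densities can be concentrated near a prescribed boundary point while respecting both the support constraints ($\ffi_i$ inside $E_i$, $\psi$ in the vacuum) and the pointwise bound $f_1+f_2\le 1$. Here I would use that boundary points are density points of the relevant phases and of the vacuum, so that arbitrarily small neighborhoods carry positive mass of each required type and the mass‑matching normalization is meaningful; continuity of $V_1,V_2$ then converts the integrated inequalities into the claimed pointwise identities. Pinning the interface constant to exactly $c_1-c_2$, rather than to an undetermined constant, is the only genuinely two‑phase ingredient, and it is precisely what the coupled three‑site perturbation achieves.
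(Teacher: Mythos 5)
Your route is the one the paper itself intends---its proof is a one-line pointer to Corollary \ref{coroll1} and to ``\eqref{primaprimolemma} plus continuity of $V_i$''---and your execution is essentially correct. The structural reductions ($G_i$ null, $F_i=E_i$, $|E_1\cap E_2|=0$, $S=E_1\cup E_2$ up to null sets), the localization of \eqref{primaprimolemma} at two points of $\partial E_1\setminus\partial E_2$ for \eqref{eqqu}, and its mirror image for \eqref{eqqa}, all work; just replace the phrase ``density point'' by what you actually use, namely that with the paper's choice of representatives every neighborhood of a point of $\partial E_1\setminus\partial E_2$ carries positive measure of both $E_1$ and of the vacuum (and note that continuity of $V_i$ also uses the compact support of minimizers, Proposition \ref{compactmin}, so that only the local integrability of $K$ matters). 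Your three-site chain perturbation for \eqref{eqqe} is a valid admissible variation and is genuinely needed: the two-site moves of Lemma \ref{primolemma} and Corollary \ref{coroll1}, localized at $y_1$, $y_2$, $z$, only give $c_{11}V_1-V_2\le c_1$ and $c_{22}V_2-V_1\le c_2$ at $z$, together with constancy of $U:=(c_{11}+1)V_1-(c_{22}+1)V_2$ on $\partial E_1\cap\partial E_2$, and these three facts do not pin the constant to $c_1-c_2$; moreover the chain cannot be split into two admissible two-site moves, since its second half would remove phase-$2$ mass where $f_2=0$. This is exactly the detail that the paper's ``one can easily prove'' glosses over.

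There is, however, one genuine gap: your proof of \eqref{eqqe} presupposes that both $\partial E_1\setminus\partial E_2$ and $\partial E_2\setminus\partial E_1$ are nonempty, since the chain is anchored at points $y_1$, $y_2$ of these sets. This fails precisely for the nested configurations that are actual minimizers in this paper: for a ball surrounded by a concentric annulus (Theorem \ref{proptutto}(iii), Corollary \ref{triang}) the inner interface is common to both phases, so one of the two pure interfaces is empty, the corresponding constant (say $c_1$) is not determined by \eqref{eqqu}, and no anchor point exists for your argument. The repair is short: if, say, $\partial E_1\setminus\partial E_2=\emptyset$, then \eqref{eqqu} is vacuous and $c_1$ is a free parameter, so \eqref{eqqe} only requires showing that $U$ is constant on $\partial E_1\cap\partial E_2$; this follows from the two-point exchange variation (convert phase $2$ into phase $1$ near $z$ and phase $1$ into phase $2$ near $z'$, for $z,z'\in\partial E_1\cap\partial E_2$, i.e. the localized version of \eqref{ineq}), after which one sets $c_1:=c_2+\mu$, where $\mu$ is that constant value. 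With this case distinction added, your proof is complete.
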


%\begin{corollary}\label{coroll2}
%Let $(f_1,f_2)$ be a minimizer of $\Eab$.
%%Set
%%\begin{multline*}
%%B_1:=\{x\in\R^N\,:\,0<f_1(x)<1\}\cap\{x\in\R^N\,:\,f_1(x) + f_2(x)<1\},\\
%%B_2:=\{x\in\R^N\,:\, 0< f_2(x)<1\}\cap\{x\in\R^N\,:\,f_1(x) + f_2(x)<1\}.
%%\end{multline*}
%Then, 
%\begin{equation}\label{sommaconst}
%{c_{11}} V_1 - V_2=c_1\text{ a.e. in } G_1\setminus S ,\quad
%{c_{22}} V_2 - V_1=c_2\text{ a.e. in } G_2 \setminus S,
%\end{equation}
%for some constants $c_1,c_2\in\R$.
%\end{corollary}

%\begin{proof}
%Taking $\psi$ and $\ffi_1$ as in the Lemma \ref{primolemma} with $\ffi_1=\psi\equiv 0$ in $G_1\setminus S$ and switching the roles of $\psi$ and $\ffi_1$ in \eqref{primaprimolemma}, we get the first identity in \eqref{sommaconst}. The proof of the second one is identical and left to the reader
%\end{proof}

\begin{lemma}[{\bf Second Variation}]\label{secondvar}
Let $(f_1,f_2)$ be a minimizer of $\Eab$ in $\Am$. Then for any $\ffi \in L^1(\R^N;\R)$ with $\ffi=0$ in $\R^N\setminus(G_1\cap G_2)$ 
and $\int_{\R^N}\ffi=0$, we have
\begin{equation}\label{equsecondvar}
({c_{11}}+{c_{22}}+2)\int_{\R^N}\int_{\R^N} 
K(x-y) \,  \ffi (x) \,  \ffi(y)   \ud x\ud y\ge 0.
\end{equation}
\end{lemma}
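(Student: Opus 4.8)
The plan is to test the minimality of $(f_1,f_2)$ against the mass-transferring perturbation
\begin{equation*}
(f_1^t,f_2^t):=(f_1+t\ffi,\,f_2-t\ffi),\qquad t\in\R .
\end{equation*}
Its decisive feature is that the pointwise sum is left unchanged, $f_1^t+f_2^t=f_1+f_2$, so the constraint $f_1^t+f_2^t\le 1$ holds for every $t$; moreover $\int_{\R^N}\ffi=0$ gives $\int f_1^t=m_1$ and $\int f_2^t=m_2$. Hence the only obstruction to $(f_1^t,f_2^t)\in\Am$ is the pointwise condition $0\le f_i^t\le 1$, and this is exactly where the hypothesis $\supp\ffi\subset G_1\cap G_2$ will be used.

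To secure admissibility on a full two-sided neighbourhood of $t=0$, I would first reduce to bounded $\ffi$ supported, for some $\ep>0$, in
\begin{equation*}
A^\ep:=\{\ep\le f_1\le 1-\ep\}\cap\{\ep\le f_2\le 1-\ep\}\cap\{|\ffi|\le 1/\ep\},
\end{equation*}
where both densities are bounded away from the obstacles $0$ and $1$. On such a set, with $M:=\|\ffi\|_{L^\infty}$, one has $f_i^t\in[0,1]$ for all $|t|\le \ep/M$, so $(f_1^t,f_2^t)\in\Am$ for $t$ in a symmetric interval $(-\delta,\delta)$.

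For such $\ffi$ I would expand the energy in $t$ using the bilinearity and symmetry of $J_K$. The key computation is that $J_K(f_1^t,f_1^t)$ and $J_K(f_2^t,f_2^t)$ each contribute a quadratic term $t^2 J_K(\ffi,\ffi)$, weighted by $c_{11}$ and $c_{22}$, while the cross term $-2J_K(f_1^t,f_2^t)$ contributes $+2\,t^2 J_K(\ffi,\ffi)$ since $J_K(t\ffi,-t\ffi)=-t^2 J_K(\ffi,\ffi)$. Thus $t\mapsto\Eab(f_1^t,f_2^t)$ is a quadratic polynomial whose leading coefficient is precisely $(c_{11}+c_{22}+2)\,J_K(\ffi,\ffi)$. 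Because $(f_1,f_2)$ is a minimizer and $t=0$ is interior to $(-\delta,\delta)$, this quadratic is minimized at $t=0$; its linear term therefore vanishes and its leading coefficient is nonnegative, which is \eqref{equsecondvar} for the truncated $\ffi$.

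Finally I would remove the truncation. For a general $\ffi$ as in the statement put
\begin{equation*}
\ffi^\ep:=\ffi\res{A^\ep}-\Big(\tfrac{1}{|A^\ep|}\textstyle\int_{A^\ep}\ffi\Big)\chi_{A^\ep},
\end{equation*}
which is bounded, supported in $A^\ep\subset G_1\cap G_2$, and has zero mean; since $A^\ep\nearrow G_1\cap G_2$ up to null sets and $\int\ffi=0$, the correction constant tends to $0$ and $\ffi^\ep\to\ffi$ in $L^1$. Applying the truncated inequality to $\ffi^\ep$ and letting $\ep\to 0$ gives the claim. I expect this passage to the limit to be the main obstacle, since $K$ may be singular at the origin and unbounded at infinity, so $L^1$ convergence alone need not transfer to $J_K(\ffi^\ep,\ffi^\ep)\to J_K(\ffi,\ffi)$; the clean way is to establish the inequality first for $\ffi\in L^\infty$ with compact support, where $x-y$ ranges over a bounded set, $K\in L^1_{\loc}$ makes $J_K(\ffi,\ffi)$ finite, and the approximation is continuous, and only then reach the general $L^1$ case by a standard truncation in both magnitude and support together with dominated convergence.
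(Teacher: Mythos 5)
Your proposal is correct and is essentially the paper's own proof: the paper likewise tests minimality against the two-sided perturbation $(f_1+t\ffi,\,f_2-t\ffi)$, reduces without loss of generality to $\ffi\in L^{\infty}$, and concludes from the nonnegativity of the second variation at the minimizer, with the expansion "left to the reader." You have merely filled in what the paper omits — the admissibility check via the set $A^\ep$, the explicit quadratic expansion showing the leading coefficient is $({c_{11}}+{c_{22}}+2)\,J_K(\ffi,\ffi)$ while the linear term vanishes, and the truncation/limit step back to general $\ffi$ — so the two arguments coincide in substance.
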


\begin{proof}
Without loss of generality assume that  $\ffi \in L^\infty (\R^N;\R)$.  Since $(f_1+t \ffi,f_2-t \ffi)\in\Am$ for $t$ small enough, the claim follows by the positiveness of the second variation at $(f_1,f_2)$, which is assumed to be a minimizer. The computations are left to the reader. 
\end{proof}
%%%%%%%%%%%%%%%%%%%%%%%%%%%%%%%%%%%%%%%%%%%%%%%%%%%
%%%%%%%%%%%%%%%%%%%%%%%%%%%%%%%%%%%%%%%%%%%%%%%%%%%
\subsection{Existence of minimizers}
Here we prove that for every ${c_{11}}, \, {c_{22}} \le 0$, the functional $\Eab$ defined in \eqref{energy}  admits a minimizer in $\Am$.

%According with \eqref{defJ}, for any $f_1,\,f_2\in L^{1}$ we set
%$$
%J(f_1,f_2):=\int_{\R^N}\int_{\R^N}f_1(x)\,f_2(y)\,K(x-y)\ud x\ud y.
%$$
%Then
%$$
%\Eab(f_1,f_2) = {c_{11}} J(f_1,f_1) + {c_{22}} J(f_2,f_2) -2 J(f_1,f_2).
%$$

%Moreover, 

For any $m_1,m_2>0$, we set
$$
I^{{c_{11}},{c_{22}}}_{m_1,m_2}:=\inf _{(f_1,f_2)\in\mathcal{A}_{m_1,m_2}}\Eab(f_{1},f_{2})
$$
and we extend this definition to the case of possibly null masses, by setting
$$
I^{{c_{11}},{c_{22}}}_{m_1,m_2}:=\left\{\begin{array}{ll}
\min_{\newatop{f_i\in L^1(\R^N;[0,1])}{\int _{\R^N}f_i(x)\ud x=m_i}} {c_{ii}}\,J_K(f_i,f_i)&\text{ if }m_i>0\text{ and }m_j=0,\\
0&\text{ if }m_1=m_2=0.
\end{array}\right.
$$
%
%
%
%
%$$
%I^{{c_{11}},{c_{22}}}_{m_1,m_2}:=\left\{\begin{array}{ll}
%\min_{\newatop{f_1\in L^1(\R^N;[0,1])}{\int _{\R^N}f_1(x)\ud x=m_1}} {c_{11}}\,J_K(f_1,f_1)&\text{ if }m_1>0\text{ and }m_2=0,\\
%\min_{\newatop{f_2\in L^1(\R^N;[0,1])}{\int _{\R^N}f_2(x)\ud x=m_2}}{c_{22}}\,J_K(f_2,f_2)&\text{ if }m_2>0\text{ and }m_1=0,\\
%0&\text{ if }m_1=m_2=0.
%\end{array}\right.
%$$

The following two lemmas state  monotonicity and sub-additivity properties of the energy with respect to the masses $m_1$, $m_2$, for nonnegative kernels. Their proofs can be easily obtained exploiting the fact that the two phases attract each other: adding masses or moving back masses going to infinity decreases the energy. The details of the proofs are left to the reader.
\begin{lemma}\label{mono}
Assume that  $K(x)\ge 0$ for all $x\in\R^N$.  For any $m_1\ge\tilde m_1\ge0$ and $m_2\ge\tilde m_2\ge 0$  we have
$$
I^{{c_{11}},{c_{22}}}_{m_1,m_2}\le I^{{c_{11}},{c_{22}}}_{\tilde m_1,\tilde m_2}.
$$ 
Moreover, if $m_1,\,m_2>0$,  equality holds true if and only if $m_i=\tilde m_i$ for $i=1,2$.
\end{lemma}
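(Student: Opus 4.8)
The plan is to prove the non-strict monotonicity by an \emph{addition of mass} argument, using that under $K\ge 0$ every interaction integral between nonnegative densities is nonnegative, and then to upgrade it to the equality characterization by placing the added mass so as to produce a \emph{strictly} negative energy variation. For the inequality $I^{{c_{11}},{c_{22}}}_{m_1,m_2}\le I^{{c_{11}},{c_{22}}}_{\tilde m_1,\tilde m_2}$, I would start from an admissible pair $(\tilde f_1,\tilde f_2)\in\mathcal A_{\tilde m_1,\tilde m_2}$ (a minimizer, which exists by Theorem \ref{esist}, and in the degenerate cases $\tilde m_j=0$ the one-phase ball of Lemma \ref{riesz}). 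Since $0\le 1-\tilde f_1-\tilde f_2\le 1$ and $\int_{\R^N}(\tilde f_1+\tilde f_2)=\tilde m_1+\tilde m_2<\infty$, the set $\{\tilde f_1+\tilde f_2<1\}$ has infinite measure, so one can choose nonnegative $g_1,g_2$ with $\int_{\R^N}g_i=m_i-\tilde m_i\ge 0$ and $g_1+g_2\le 1-\tilde f_1-\tilde f_2$, whence $(f_1,f_2):=(\tilde f_1+g_1,\tilde f_2+g_2)\in\Am$. Expanding $\Eab(f_1,f_2)$ by bilinearity of $J_K$ produces $\Eab(\tilde f_1,\tilde f_2)$ plus the three groups of terms ${c_{11}}\big[2J_K(\tilde f_1,g_1)+J_K(g_1,g_1)\big]$, ${c_{22}}\big[2J_K(\tilde f_2,g_2)+J_K(g_2,g_2)\big]$ and $-2\big[J_K(\tilde f_1,g_2)+J_K(g_1,\tilde f_2)+J_K(g_1,g_2)\big]$. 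Because $K\ge 0$ and all densities are nonnegative every bracket is $\ge 0$, while ${c_{11}},{c_{22}}\le 0$ and the cross coefficient is $-2<0$; hence each added term is $\le 0$ and $\Eab(f_1,f_2)\le\Eab(\tilde f_1,\tilde f_2)$. Taking the infimum over $(\tilde f_1,\tilde f_2)$ yields the claim.

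For the equality case with $m_1,m_2>0$, I would show that if $(m_1,m_2)\neq(\tilde m_1,\tilde m_2)$ then, starting from an actual minimizer $(\tilde f_1,\tilde f_2)$ of the smaller problem, the variation above can be made strictly negative; this gives $I^{{c_{11}},{c_{22}}}_{m_1,m_2}\le\Eab(f_1,f_2)<\Eab(\tilde f_1,\tilde f_2)=I^{{c_{11}},{c_{22}}}_{\tilde m_1,\tilde m_2}$. Assume, say, $m_1>\tilde m_1$ (the case $m_2>\tilde m_2$ being symmetric, with even more gain if both increase). If ${c_{11}}<0$, I would take $g_2=0$ and concentrate $g_1$ in a small ball sitting in the available room; since $K$ is radial, non-increasing and nontrivial, $K>0$ in a neighbourhood of the origin, so $J_K(g_1,g_1)>0$ and the single term ${c_{11}}J_K(g_1,g_1)<0$ already forces the strict inequality, irrespective of where $g_1$ is placed. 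If instead ${c_{11}}=0$ the self term is useless and one must rely on the cross contribution $-2J_K(g_1,\tilde f_2)$: since $\int_{\R^N}\tilde f_2=m_2>0$, it suffices to place $g_1$ so that $J_K(g_1,\tilde f_2)>0$.

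The hard part will be exactly this last positioning step. When $K>0$ everywhere (as for the Coulomb kernel in dimension $N\ge 3$, which is the situation of interest compatible with $K\ge 0$) the quantity $J_K(g_1,\tilde f_2)$ is automatically strictly positive for \emph{any} placement of $g_1$, and there is nothing more to do. For a general $K\ge 0$, which may vanish outside a ball, I must instead guarantee that the added mass genuinely interacts, i.e. that it can be located within the positivity range of $K$ around $\spt\tilde f_2$ while respecting the constraint $f_1+f_2\le 1$. This is the quantitative content of the heuristic that \emph{bringing mass back from infinity lowers the energy}, and it is the only point where one has to invoke the geometry of the minimizer, namely the boundedness of $\spt\tilde f_2$ and the availability of room within distance $\dist(\cdot,\spt\tilde f_2)$ small. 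Once a strictly positive cross (or self) interaction is secured, the strict inequality follows at once, and with it the characterization of the equality case.
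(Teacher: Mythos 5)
Your mass--addition computation is exactly the mechanism the paper has in mind (its proof is explicitly ``left to the reader'' with the hint that adding mass decreases the energy), and the bilinear expansion with all extra terms nonpositive is correct. However, there is a genuine logical flaw in how you set it up: you start from a \emph{minimizer} of the $(\tilde m_1,\tilde m_2)$--problem ``which exists by Theorem \ref{esist}'', and in the equality case you additionally invoke the geometry of that minimizer (boundedness of $\spt\tilde f_2$, i.e.\ Proposition \ref{compactmin}, which also presupposes a minimizer). This is circular: Lemma \ref{mono} is an ingredient of the proof of Theorem \ref{esist} --- it is used, together with Lemma \ref{subadd}, in the final chain of inequalities
$I^{{c_{11}},{c_{22}}}_{m_1,m_2}\ge\sum_l \Eab(f_1^l,f_2^l)\ge\sum_l I^{{c_{11}},{c_{22}}}_{m_1^l,m_2^l}\ge I^{{c_{11}},{c_{22}}}_{\tilde m_1,\tilde m_2}\ge I^{{c_{11}},{c_{22}}}_{m_1,m_2}$,
and both the inequality and its equality characterization are needed there to conclude $H=1$ and $\tilde m_i=m_i$. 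Note also that when $K\ge0$ (the hypothesis of this lemma) one is necessarily in the second case of the proof of Theorem \ref{esist}, precisely the case where Lemma \ref{mono} is invoked, so the circularity cannot be sidestepped.

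The repair is to run your argument on near-minimizers rather than minimizers. For the non-strict inequality this is immediate: given $\e>0$ pick $(\tilde f_1,\tilde f_2)\in\mathcal A_{\tilde m_1,\tilde m_2}$ with $\Eab(\tilde f_1,\tilde f_2)\le I^{{c_{11}},{c_{22}}}_{\tilde m_1,\tilde m_2}+\e$, add the mass exactly as you do, and let $\e\to0$. The equality case, however, then requires more than you have written, because elements of a minimizing sequence need not have compact support, nor any a priori concentration: you must produce a gain which is \emph{uniform along the sequence}. Concretely, one places a fixed chunk of the added mass in a ball of radius comparable to the positivity radius $r_0$ of $K$ (such a ball with enough free room always exists by a pigeonhole argument, since $\int(\tilde f_1^n+\tilde f_2^n)$ is finite), which settles the cases where a self term (${c_{11}}<0$) or a cross term between two added chunks is available. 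In the remaining case (say ${c_{11}}=0$, $m_1>\tilde m_1$, $m_2=\tilde m_2>0$) you must rule out that $\tilde f_2^n$ spreads out so that no ball of radius $r_0$ carries a fixed amount of its mass: if it did, all interaction terms involving $\tilde f_2^n$ would vanish in the limit (split the kernel near/far and use $K\in L^1_{\loc}$, $K$ monotone), forcing $I^{{c_{11}},{c_{22}}}_{\tilde m_1,\tilde m_2}\ge 0$, which contradicts the strictly negative energy of a concentrated competitor. Only after this (or in the trivial case $\inf K>0$) does your positioning step yield a gain bounded away from zero, and hence the strict inequality. With these modifications your proof is correct and coincides in spirit with the paper's intended one.
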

\begin{lemma}\label{subadd}
Assume that  $K(x)\ge 0$ for all $x\in\R^N$. Let $\{m_1^l\},\,\{m_2^l\}$ be two nonnegative sequences such that  $0\le \tilde m_{i}:=\sum_{l\in\N}m_i^l<+\infty$ for $i=1,2$.
Then
\begin{equation}
\sum_{l\in\N}I^{{c_{11}},{c_{22}}}_{m_1^l,m_2^l}\ge I^{{c_{11}},{c_{22}}}_{\tilde m_1,\tilde m_2}.
\end{equation}
Moreover, if $\tilde m_1, \, \tilde m_2 >0$, then  equality holds true if and only if  $\tilde m_i^l\equiv 0$ for any $l\neq \bar l$, for some $\bar l\in\N$ and for $i=1,2$.
\end{lemma}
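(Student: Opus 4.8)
The plan is to establish the two assertions of Lemma \ref{subadd} separately: the inequality by a ``separate and translate'' construction, and the rigidity (equality) case by exhibiting a strictly better competitor for the combined problem.

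\emph{The inequality.} Fix $\eta>0$ and, for each $l$, choose a compactly supported near-minimizer $(f_1^l,f_2^l)\in\A_{m_1^l,m_2^l}$ with $\Eab(f_1^l,f_2^l)\le I^{{c_{11}},{c_{22}}}_{m_1^l,m_2^l}+\eta\, 2^{-l}$. Such a competitor exists because any admissible pair may be truncated to a bounded set and its lost mass reinstated inside that same set (there is always room, since $f_1^l+f_2^l\le 1$ while the set has large volume), and this changes the energy arbitrarily little by the continuity of $\Eab$ under tight convergence, which applies since $K$, being nonnegative and non-increasing, is bounded at infinity. Translating the $l$-th pair by vectors $\tau_l$ with $|\tau_l|\to\infty$ fast enough that the (compact) supports become pairwise disjoint, the superposition $f_i:=\sum_l f_i^l(\cdot-\tau_l)$ satisfies $f_1+f_2\le 1$ and $\int_{\R^N}f_i\ud x=\tilde m_i$, so $(f_1,f_2)\in\A_{\tilde m_1,\tilde m_2}$. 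Since $K\ge 0$ and $f_i^l\ge 0$, every mixed term $J_K\big(f_i^l(\cdot-\tau_l),f_j^{l'}(\cdot-\tau_{l'})\big)$ with $l\neq l'$ is nonnegative; as these enter the energy only through the nonpositive coefficients $c_{11},c_{22},-2$, dropping them can only raise the value, whence $\Eab(f_1,f_2)\le\sum_l\Eab(f_1^l,f_2^l)$ (this algebraic bound uses $K\ge 0$ but not disjointness, which is needed only for admissibility). Thus $I^{{c_{11}},{c_{22}}}_{\tilde m_1,\tilde m_2}\le\sum_l I^{{c_{11}},{c_{22}}}_{m_1^l,m_2^l}+\eta$, and letting $\eta\to 0$ gives the claimed inequality.

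\emph{The equality case.} One direction is immediate: if $m_i^l=0$ for all $l\neq\bar l$ and $i=1,2$, the left-hand side is the single term $I^{{c_{11}},{c_{22}}}_{m_1^{\bar l},m_2^{\bar l}}=I^{{c_{11}},{c_{22}}}_{\tilde m_1,\tilde m_2}$. For the converse, assume $\tilde m_1,\tilde m_2>0$ and that the mass is \emph{not} concentrated in a single index; I will produce a competitor for the $(\tilde m_1,\tilde m_2)$-problem whose energy is strictly below $\sum_l I^{{c_{11}},{c_{22}}}_{m_1^l,m_2^l}$, forcing strict inequality. Let $L_i:=\{l:\,m_i^l>0\}$; both are nonempty, and non-concentration yields $l_1\in L_1$ and $l_2\in L_2$ with $l_1\neq l_2$ (otherwise $L_1=L_2$ would be a common singleton). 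Take genuine minimizers for every cluster — they exist by Theorem \ref{esist}, and are balls for one-phase clusters by Lemma \ref{riesz} — truncate them to compact support with total energy loss at most $\eta$, and place them with pairwise disjoint supports but with cluster $l_1$ \emph{adjacent} to cluster $l_2$, so that the phase-$1$ part of $l_1$ (mass close to $m_1^{l_1}>0$) and the phase-$2$ part of $l_2$ (mass close to $m_2^{l_2}>0$) lie within distance $\rho$, where $\rho>0$ is chosen so that $K\ge K(\rho)>0$ on $B_\rho$ (possible since $K\not\equiv 0$ is non-increasing). The superposition is admissible exactly as above, and now the $(l_1,l_2)$ mixed contribution is at most $-2J_K\big(f_1^{l_1},f_2^{l_2}(\cdot-\tau)\big)=:-2\sigma$ with $\sigma>0$, while all other mixed terms remain $\le 0$. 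Hence its energy is $\le\sum_l I^{{c_{11}},{c_{22}}}_{m_1^l,m_2^l}+\eta-2\sigma$, and choosing the truncation fine enough that $\eta<\sigma$ gives $I^{{c_{11}},{c_{22}}}_{\tilde m_1,\tilde m_2}<\sum_l I^{{c_{11}},{c_{22}}}_{m_1^l,m_2^l}$, contradicting equality.

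\emph{Main obstacle.} The inequality is routine once one observes that all mixed terms have the ``right'' sign; the delicate point is the rigidity, where one must create a strictly negative cross contribution of a size bounded below. This forces one to (i) pass to compactly supported competitors so that ``adjacent placement'' is meaningful and the constraint $f_1+f_2\le 1$ is preserved, and (ii) guarantee that the interacting masses $m_1^{l_1},m_2^{l_2}$ stay concentrated in bounded regions, so that $\sigma$ is a fixed positive number rather than something degenerating with the approximation. It is for this reason that one should truncate \emph{genuine} minimizers (whose existence is supplied by Theorem \ref{esist}), for which $\sigma$ converges to a strictly positive limit as the truncation is refined, rather than arbitrary near-minimizers. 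The combinatorial selection of $l_1\neq l_2$ is exactly where both hypotheses $\tilde m_1>0$ and $\tilde m_2>0$ are used.
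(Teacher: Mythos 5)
Your proof of the inequality half is correct, and it is exactly the argument the paper has in mind (the paper only sketches it: ``moving back masses going to infinity decreases the energy''): pass to compactly supported near-minimizers using continuity of $\Eab$ under tight convergence, superpose them with disjoint supports, and note that all inter-cluster interaction terms enter the energy through the nonpositive coefficients $c_{11},c_{22},-2$, so discarding them only increases the value. The equality case, however, contains a genuine logical flaw: you produce ``genuine minimizers'' of the cluster problems by citing Theorem \ref{esist}, but in the paper Lemma \ref{subadd} \emph{precedes} Theorem \ref{esist} and is an ingredient of its proof --- the final chain of inequalities there invokes precisely the equality case of this lemma (together with Lemma \ref{mono}) to conclude $H=1$ and $\tilde m_i=m_i$. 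So your argument for the ``only if'' direction is circular inside the paper's logical structure. Any admissible proof must work with near-minimizers, or else the lemma would have to be restated with the extra hypothesis (which does suffice for the application in Theorem \ref{esist}) that the cluster infima are attained.

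There is a second gap, independent of the circularity: the lower bound $\sigma>0$ is not justified for all kernels allowed by the hypotheses. $K$ is only assumed nonnegative, radial, non-increasing and locally integrable, so it may vanish outside a bounded set --- for instance $K=\chi_{[-\rho,\rho]}$, the very kernel the paper uses in Remark \ref{controes}. For such $K$, ``adjacent placement'' yields $\sigma>0$ only if the phase-$1$ mass of cluster $l_1$ and the phase-$2$ mass of cluster $l_2$ can be brought within distance $\rho$ of each other while the \emph{total} supports of the two clusters stay disjoint (this is forced by the constraint $f_1+f_2\le 1$). Nothing you prove rules out that the minimizer of cluster $l_1$ has its phase $1$ enclosed by a layer of phase $2$ of thickness larger than $\rho$; in that case every admissible placement of cluster $l_2$ keeps its phase $2$ out of range of cluster $l_1$'s phase $1$, so $\sigma=0$, and if moreover $c_{22}=0$ and cluster $l_2$ is pure phase $2$, all the other discarded inter-cluster terms vanish as well, so your competitor produces no strict gain whatsoever. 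You correctly identified that the crux is a lower bound on $\sigma$ uniform in the approximation, but the remedy you chose (truncating genuine minimizers) both causes the circularity and still fails to deliver such a bound for kernels of bounded range; for those kernels the strict inequality seems to require a different mechanism (e.g., merging the two clusters and re-optimizing the mixed configuration) rather than mere juxtaposition of the separate minimizers.
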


\begin{theorem}\label{esist}
Let ${c_{11}}, \, {c_{22}} \le 0$. Then,  the functional $\Eab$ defined in \eqref{energy} admits a minimizer in $\Am$. 
More precisely, let $\{(f_{1,n},f_{2,n})\}$ be a minimizing sequence. Then, there exists a sequence of  translations $\{\tau_n\}\subset\R^N$ such that 
%$(f_{1,n}(\cdot+\tau_n),f_{2,n}(\cdot+\tau_n))$ is still a minimizing sequence, 
 (up to a subsequence)
$f_{i,n}(\cdot - \tau_n) \to f_i$ tightly for some  $(f_1,f_2) \in \A_{m_1,m_2}$ which minimizes $\Eab$.
\end{theorem}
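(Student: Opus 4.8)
\emph{The plan is to argue by concentration--compactness in the spirit of Lions, the crucial point being to prevent mass from escaping to infinity.}

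First I would record the a priori bounds and two reductions. The membership $(f_{1,n},f_{2,n})\in\Am$ forces $0\le f_{i,n}\le 1$ and $\int_{\R^N}f_{i,n}\ud x=m_i$, so the sequence is bounded in $L^1\cap L^\infty$. Next I would check that $I:=I^{c_{11},c_{22}}_{m_1,m_2}$ is finite: by the Riesz rearrangement inequality (Lemma \ref{riesz}), among densities with values in $[0,1]$ and prescribed mass the quantities $J_K(f_i,f_i)$ and $J_K(f_1,f_2)$ are maximised by characteristic functions of balls and hence bounded above, which together with $c_{ii}\le 0$ bounds $\Eab$ from below. I would then use the elementary identity $J_{K+\lambda}(f_i,f_j)=J_K(f_i,f_j)+\lambda\,m_i m_j$, showing that replacing $K$ by $K+\lambda$ changes $\Eab$ only by the configuration-independent constant $\lambda(c_{11}m_1^2+c_{22}m_2^2-2m_1m_2)$; the minimisation is therefore unaffected. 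Consequently, whenever $\lim_{|x|\to\infty}K(x)>-\infty$ I may subtract this limit and assume $K\ge 0$ with $\lim_{|x|\to\infty}K(x)=0$, so that Lemmas \ref{mono} and \ref{subadd} apply; in the opposite case $\lim_{|x|\to\infty}K(x)=-\infty$ the kernel is \emph{confining}, in the sense that separating positive masses drives the energy to $+\infty$.

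I would then set $g_n:=f_{1,n}+f_{2,n}$, so that $0\le g_n\le 1$ and $\int_{\R^N}g_n\ud x=m_1+m_2=:m$, and apply the concentration--compactness trichotomy to $g_n$. In the \textbf{vanishing} case one gets $\Eab(f_{1,n},f_{2,n})\to I^{c_{11},c_{22}}_{0,0}=0$, contradicting $I<0$; this strict inequality follows from the strict monotonicity of Lemma \ref{mono}, comparing $(m_1,m_2)$ with $(0,0)$. In the \textbf{dichotomy} case the mass $m$ splits into two clusters whose supports drift infinitely apart; tracking how $(m_1,m_2)$ distributes as $(m_1^1,m_2^1)$ and $(m_1^2,m_2^2)$, and using that after the reduction $K\ge 0$, $\lim_{|x|\to\infty}K(x)=0$ the cross-cluster interaction vanishes in the limit, one obtains $\liminf_n\Eab(f_{1,n},f_{2,n})\ge I^{c_{11},c_{22}}_{m_1^1,m_2^1}+I^{c_{11},c_{22}}_{m_1^2,m_2^2}$. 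The strict subadditivity of Lemma \ref{subadd} then makes the right-hand side strictly larger than $I$ unless one cluster is trivial, a contradiction; in the confining case both alternatives are excluded directly, since spreading or splitting mass costs infinite energy. \emph{This exclusion of dichotomy is the heart of the argument and the step I expect to be the main obstacle, as it is precisely where attractivity — the monotone kernel together with $c_{ij}\le 0$ — must be exploited, and where the bounded-below and the confining regimes require slightly different reasoning.}

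Only \textbf{compactness} survives, producing translations $\tau_n$ with $g_n(\cdot-\tau_n)$ tight; since $0\le f_{i,n}\le g_n$, each $f_{i,n}(\cdot-\tau_n)$ is tight as well. Extracting a subsequence, $f_{i,n}(\cdot-\tau_n)\weakstar f_i$ in $L^\infty$, tightness prevents loss of mass so that $\int_{\R^N}f_i\ud x=m_i$ and the convergence is tight, while passing the pointwise bound to the weak-$*$ limit gives $f_1+f_2\le 1$, i.e. $(f_1,f_2)\in\Am$. Finally I would invoke continuity of $\Eab$ along tight convergence when $K$ is bounded at infinity (as observed after \eqref{admissible}), and lower semicontinuity — obtained from the confining lower bound together with Fatou's lemma — in the remaining case, to deduce $\Eab(f_1,f_2)\le\liminf_n\Eab(f_{1,n},f_{2,n})=I$. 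Hence $(f_1,f_2)$ minimises $\Eab$ in $\Am$, which completes the proof.
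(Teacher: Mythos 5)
Your proposal is correct and follows essentially the same strategy as the paper's proof: the same two-case split according to whether $\lim_{|x|\to\infty}K(x)=-\infty$ or is finite, the same normalization of the kernel so that $K\ge 0$ and vanishes at infinity, the same use of the strict forms of Lemmas \ref{mono} and \ref{subadd} to rule out loss or splitting of mass, and the same conclusion via tightness and (semi)continuity of $\Eab$ under tight convergence. The only real difference is presentational: you invoke the abstract Lions trichotomy for $g_n=f_{1,n}+f_{2,n}$, whereas the paper implements that concentration--compactness mechanism by hand through its cube decomposition, with the light cubes playing the role of your vanishing part and the finitely many drifting clusters of heavy cubes playing the role of your dichotomy part.
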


\begin{proof}
We distinguish between two cases. \\

{\it First case: $\lim_{|x|\to +\infty} K(x) = -\infty$}. 
For every $\e>0$ and for every pair of sets $A_{1,n}, \, A_{2,n}\subset \R^N$ such that 
$$
\int_{A_{i,n}} f_{i,n}(x) \ud x \ge \e,
$$ 
we have dist$(A_{1,n},  A_{2,n})\le  C$ for some $C$ independent of $n$; otherwise, we would clearly have $-J_K(f_{1,n},f_{2,n})\to + \infty$. 
As a consequence, by the triangular inequality we deduce that for every pair of sets $A_{i,n}, \, B_{i,n}\subset \R^N$ such that 
$$
\int_{A_{i,n}} f_{i,n}(x) \ud x \ge \e, \qquad \int_{B_{i,n}} f_{i,n}(x) \ud x \ge \e,
$$ 
we have dist$(A_{i,n},  B_{i,n})\le  C$  for some $C$ independent of $n$. As a result there exists $\{\tau_n\}\subset \R$ such that,  
up to a subsequence, $f_{i,n}(\cdot-\tau_n)$ tightly converge  to some $f_i$ in $L^1$.  
By the lower semicontinuity of $\Eab$ with respect to the tight convergence, we conclude that $(f_1,f_2)$ is a minimizer of $\Eab$ in $\Am$.\\

{\it Second case: $\lim_{|x|\to +\infty} K(x) +C=0$ for some $C\in\R$}.  For simplicity, we assume that  $C=0$, since additive constants in the kernel bring only an additive constant in the total energy. Set $Q_0:= [0,1]^N$, and for every $z \in \Z^N$, let $Q^{z}:=  z+ Q_0$ and $m^z_{i,n}:=\int_{Q^z} f_{i,n}(x) \ud x$.  
For any given $\e>0$, we set 
\begin{eqnarray*}
\mathcal I_{\e,n} := \{ z \in \Z^N: \, m^z_{i,n} \le \e, \, i=1,2\}, && \J_{\e,n} := \{ z \in \Z^N: \, \max_i \, m^z_{i,n} > \e\},\\
A_{\e,n}:= \bigcup_{z\in \I_{\e,n}} Q^z, &&     g^\e_{i,n}:=  f_{i,n}  \, \chi_{A_{\e,n}},\\
 E_{\e,n}:= \bigcup_{z\in \J_{\e,n}} Q^z, &&     f^\e_{i,n}:=  f_{i,n}  \, \chi_{E_{\e,n}}
\end{eqnarray*}
We first prove that
\begin{multline}\label{claim}
J_K(g^\e_{1,n}, f_{1,n}) + J_K(g^\e_{2,n},f_{2,n}) +  J_K(g^\e_{1,n},f_{2,n})
 + J_K(f_{1,n},g^\e_{2,n}) \le r(\e),
\end{multline}
where $r(\e) \to 0$ as $\e\to 0$.
We show only that $J_K(g^\e_{1,n},f_{2,n})<r(\ep)$ (the other cases being analogous). For every fixed $R \in\N$ we have
\begin{multline}\label{tante}
J_K(g^\e_{1,n},f_{2,n}) = \sum_{z \in \I_{\e,n}} \sum_{w \in \Z^N}  J_K(f_{1,n} \chi_{Q^z}, f_{2,n} \chi_{Q^w})\\
 = \sum_{z \in \I_{\e,n}, w \in \Z^N: |z-w|\le R}  J_K(f_{1,n} \chi_{Q^z}, f_{2,n} \chi_{Q^w})\\
+ \sum_{z \in \I_{\e,n}, w \in \Z^N: |z-w| > R}  J_K(f_{1,n} \chi_{Q^z}, f_{2,n} \chi_{Q^w}).
\end{multline}
Using that $K$ is integrable and by Riesz inequality (see Lemma \ref{riesz}), it is easy to see that
\begin{equation*}
J_K(f_{1,n} \chi_{Q^z}, f_{2,n} \chi_{Q^w}) \le h(m_{1,n}^z) \, m_{2,n}^w ,
\end{equation*}
where $h(t):=\int_{B^t} K(x) \, dx$ with $B^t$  the ball centered at the origin and with mass $t$, so that  $\lim_{t\to 0}  h(t) = 0$. We deduce that
the first addendum in \eqref{tante} tends to zero as $\e\to 0$ (for $R$ fixed). Moreover,   the second addendum  is bounded (uniformly with respect to $\e$) from above by a function $\omega(R)$,  such that $\omega(R)\to 0$ as  $R\to\infty$. This completes the proof of \eqref{claim}. 

By the mass constraints on $f_i$ we have that 
$\sharp \J_{\e,n} \le \frac{m_1+m_2}{\e}$. Therefore, up to a subsequence, we  can always write $\J_{\e,n} = \cup_{l=1}^{H_\e} J_{\e,n}^l$ for some ${H_\e}\le  \frac{m_1+m_2}{\e}$, where $J_{\e,n}^l$ are pairwise disjoint and satisfy:
\begin{itemize}
\item[(1)] for every $l$, $\mathrm{diam}(J_{\ep,n}^l)\le M_\e$ for some $M_\e\in \R$ independent of $n$;  
\item[(2)] for every $l_1\neq l_2$, $\dist(J_{\e,n}^{l_1}, J_{\e,n}^{l_2}) \to \infty$ as $n\to\infty$.
\end{itemize}
Notice that by \eqref{claim} we deduce that, for $\e$ small enough, $\J_{\e,n} \neq \emptyset$ and  $H_\e\ge1$ (otherwise $ I^{{c_{11}},{c_{22}}}_{m_1,m_2}$ would be zero). 
%Let $g_{i,n}^{\e,l}$ be the restriction of $f_i$ on the squares $Q^j$ with $j \in J_{\e,n}^l$, and let $g_{i,n}^{\e} := \sum_{l=1^H} g_{i,n}^{\e,l}$. 
Set $ f^{\e,l}_{i,n}:=f_{i,n}^\ep\res\bigcup_{z\in J_{\ep,n}^l}Q^z$ for $i=1,2$ and for every $l=1, \ldots, H_\e$.  There exists a translation $\tau_{l,n}$ such that, up to a subsequence, 
$f^{\e,l}_{i,n}(\cdot - \tau_{l,n})$ converge tightly to some $f^{\e,l}_{i}$.
By \eqref{claim}, recalling that $\lim_{|x|\to +\infty} K(x) =0$   and  using the continuity of the energy with respect to the tight convergence,  we have
\begin{multline}\label{siusa}
\lim_n \Eab(f_{1,n},f_{2,n}) \ge \limsup_n \Eab(f^\e_{1,n},f^\e_{2,n}) - r(\e) \\
= \limsup_n \sum_{l=1}^{H_\ep} \Eab(f^{\e,l}_{1,n},f^{\e,l}_{2,n}) - r(\e) \geq\sum_{l=1}^{H_\ep} \Eab(f^{\e,l}_{1},f^{\e,l}_{2}) - r(\e). 
\end{multline}
Let now $\{\e_k\}$ be a decreasing sequence converging to zero as $k\to\infty$. 
We notice that $H_{\ep_k}$ is nondecreasing with respect to $k$ and then $ H_{\ep_k}\to H\in \N\cup{\infty}$. 
We can always choose the labels in such a way that $\{f_{i,n}^{\e_k,l} \}$, and so also their limits $f_{i}^{\ep_k,l}$, are monotone with respect to $k$. As a consequence, it is not restrictive to assume that the translation vectors $\tau_{l,n}$ are independent of $\ep$. By monotonicity,  $f_i^{\e_k,l}$ converge strongly in $L^1$ to some $f_{i}^{l}$ for any $1\le l \le H$ and  $i=1,\, 2$. 
By \eqref{siusa}  and the continuity of $\Eab$ with respect to the tight convergence, it follows that
\begin{equation}\label{quasiconcl}
I^{{c_{11}},{c_{22}}}_{m_1,m_2} =\lim_{n}\Eab(f_{1,n},f_{2,n})\ge \sum_{l=1}^{H} \Eab(f^{l}_{1},f^{l}_{2}).
\end{equation}

Let $m_i^l:=\int_{\R^N}f_{i}^l(x)\ud x$, then $\tilde m_i:=\sum_{l=1}^{H}m_{i}^l\le m_i$ for $i=1,2$.

By \eqref{quasiconcl} and lemmas \ref{subadd} and \ref{mono}, we get
$$
I^{{c_{11}},{c_{22}}}_{m_1,m_2} \ge \sum_{l=1}^{H} \Eab(f^{l}_{1},f^{l}_{2})
\ge \sum_{l=1}^{H} I^{{c_{11}},{c_{22}}}_{m^l_1,m^l_2} \ge I^{{c_{11}},{c_{22}}}_{\tilde m_1,\tilde m_2}\ge I^{{c_{11}},{c_{22}}}_{m_1,m_2};
$$
it follows that all the inequalities above are in fact equalities, $H=1$ and $\tilde m_i=m_i$, which concludes the proof. 
\end{proof}

\begin{remark}\label{morephase}
{\rm The problem considered in this paper could be generalized to the case of more than two phases, with mutual and self attractive interactions. We notice that, with minor changes, the existence of a solution for this generalized problem would follow along the  lines of  the proof of Theorem \ref{esist}.
}
\end{remark}

\begin{remark}\label{nonex}
\rm{Notice that in the case of ${c_{11}},{c_{22}}>0$ the functional $\Eab$ does not admit in general a minimizer in $\Am$. For instance, if ${c_{11}}>0$, then it is easy to see that, for $m_1$ large enough, any minimizing sequence $f_{1,n}$ for the first phase  tends to lose mass at infinity. As a consequence, $\Eab$ does not admit  a minimizer in $\Am$
  for $m_1$ large enough.
  
Moreover, assume that $K$ be a positive definite kernel
as in \eqref{posdefk}, and let ${c_{11}},{c_{22}}\ge 1$. Then, for any $(f_1,f_2)\in\Am$, we have
\begin{equation*}
\E_K^{c_{11},c_{22}}(f_1,f_2)=(c_{11}-1)\,J_{K}(f_1,f_1)+(c_{22}-1)\,\,J_{K}(f_2,f_2)+J_{K}(f_1-f_2,f_1-f_2)\ge 0.
\end{equation*}
It is easy to see that the infimum of $\E_K^{c_{11},c_{22}}$ is zero.
%By the convexity of the functional $J_{K}$, 
It follows that  $(f_1,f_2)$ is a minimizer of $\E_K^{c_{11},c_{22}}$ in $\Am$ if and only if  $m_1=m_2$, $c_{11}=c_{22}=1$
%As a consequence, such minimizer exists only if  $m_1=m_2$ and $c_{11}=c_{22}=1$ 
and  $f_i=f\in L^{1}(\R^N;[0,\frac 1 2])$ with $\int_{\R^N}f(x)\ud x=m_1=m_2$.
%Indeed, if $m_1\neq m_2$, we cannot have that $f_1=f_2$ a.e. in $\R^N$.

Finally,  if $m_1=m_2$ and $\max\{c_{11},c_{22}\}>1$, then still the energy does not admit a minimizer. Indeed any minimizer $(f_1,f_2)$ should satisfy  $f_1=f_2$ a.e. and  the  energy becomes
$$
(c_{11}+c_{22}-2)\,J_{K}(f,f),
$$
which does not admit a minimizer in the class of functions $f\in L^{1}(\R^N;[0,\frac 1 2])$ with $\int_{\R^N}f(x)\ud x=m_1=m_2$.
}
\end{remark}

\subsection{Compactness of minimizers}
Here we prove the compactness property of minimizers.
\begin{proposition}\label{compactmin}
Every minimizer $(f_1,f_2)$ of $\Eab$ in $\Am$ has compact support. 
\end{proposition}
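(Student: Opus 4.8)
The plan is to combine the first-variation optimality conditions with the behavior of the potentials $V_i=f_i\ast K$ at infinity, reducing the statement to the boundedness of a single sublevel set. Writing $\mu_i:=c_{ii}V_i-V_j$ (with $\{i,j\}=\{1,2\}$) for the functional derivative of $\Eab$ with respect to $f_i$, minimality together with Lemma~\ref{primolemma} says that mass of phase $i$ can only sit where $\mu_i$ is smallest: more precisely, \eqref{primaprimolemma} forces $\mathrm{ess\,sup}_{\{f_i>0\}}\mu_i\le \mathrm{ess\,inf}_{\{f_1+f_2<1\}}\mu_i=:\gamma_i$, a finite constant (it coincides on $G_i\setminus S$ with the $\gamma_i$ of \eqref{sommaconst}, and $\mu_i$ is bounded since $f_i\le1$ and $K\in L^1_{\loc}$). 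Consequently $\{f_i>0\}\subseteq\{\mu_i\le\gamma_i\}$ up to null sets, so it suffices to show $\{\mu_i\le\gamma_i\}$ is bounded for $i=1,2$. I would then analyze $\mu_i$ at infinity, distinguishing the two regimes for $K$ already used in Theorem~\ref{esist}.

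First I would treat the case $\lim_{|x|\to\infty}K(x)=-\infty$. Since the minimizer is a fixed pair of $L^1$ densities its mass is automatically tight, so for each $\varepsilon$ there is $R$ with $\int_{B_R}f_i\ge m_i-\varepsilon$; splitting the convolution into the contribution from $B_R$ (where $|x-y|\ge|x|-R$ and $K$ is very negative) and the remainder (whose near-diagonal singularity is controlled by $K\in L^1_{\loc}$ and whose far part is bounded above), one gets $V_i(x)\to-\infty$ as $|x|\to\infty$ for $i=1,2$. As $c_{ii}\le0$, the term $c_{ii}V_i$ is eventually nonnegative while $-V_j\to+\infty$ (here $m_j>0$ is used), so $\mu_i(x)\to+\infty$. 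Hence $\{\mu_i\le\gamma_i\}$ is bounded and the support is compact in this regime.

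The remaining case is $\lim_{|x|\to\infty}K(x)=0$, in which $K$ non-increasing forces $K\ge0$ and thus $V_i\ge0$. A convolution estimate (again splitting near- and far-field, using $f_i\in L^1\cap L^\infty$ and $K\to0$) shows $V_i(x)\to0$, so $\mu_i(x)\to0$ at infinity; moreover $\mu_i=c_{ii}V_i-V_j\le0$ everywhere, whence $\gamma_i\le0$. To conclude I must upgrade this to the strict inequality $\gamma_i<0$: then $\{\mu_i\le\gamma_i\}$ is bounded because $\mu_i\to0$. This strictness is exactly the crux, and it is \emph{not} a consequence of first-order optimality alone, since $\mu_i\le0$ is automatic and the optimality condition becomes vacuous when $\gamma_i=0$.

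I would establish $\gamma_i<0$ by an energy-comparison (mass transport) argument contradicting minimality. Assuming $\supp f_i$ unbounded, pick $\rho$ large so that the tail mass $\sigma:=\int_{\{|x|>\rho\}}f_i$ is positive but arbitrarily small, and relocate this tail into a bounded reservoir adjacent to the bulk, placed in $\{f_1+f_2<1\}$ and within the short range where $K>0$; such room exists because $\int(f_1+f_2)=m_1+m_2<\infty$ while the bulk carries positive mass in a fixed ball. Since $K$ is non-increasing, the relocated mass gains interaction with the concentrated bulk and lowers the energy; the decisive gain comes through the cross term, whose coefficient $-2<0$ together with $m_j>0$ produces a strict decrease of order $\sigma$, while the discarded tail--tail interactions are of order $\sigma^2$ and negligible. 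This contradiction yields $\gamma_i<0$, so $\{f_i>0\}$ is bounded for $i=1,2$ and $(f_1,f_2)$ has compact support. The main obstacle is precisely this last step --- making the relocation admissible (respecting $f_1+f_2\le1$ within the interaction range) and quantifying the strict gain --- which is where the strict monotonicity and subadditivity of Lemmas~\ref{mono}--\ref{subadd} and the concentration of mass from Theorem~\ref{esist} enter.
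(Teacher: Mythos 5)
Your skeleton is in fact the paper's: both arguments rest on testing the first variation \eqref{primaprimolemma} with the far tail of $f_i$ against mass placed in free room, combined with the behavior of the potentials at infinity, and your treatment of the regime $\lim_{|x|\to\infty}K(x)=-\infty$ is correct and essentially what the paper does there. The genuine gap is in the regime $K\to 0$, which you yourself call the crux and leave unresolved. Moreover, your diagnosis of that regime is off: the strictness $\gamma_i<0$ \emph{does} follow from \eqref{primaprimolemma} alone whenever $K>0$ everywhere, with no energy comparison. Since $\gamma_i$ is an essential infimum over \emph{all} of $\{f_1+f_2<1\}$, it is enough to exhibit one positive-measure piece of free room on which $\mu_i$ is bounded away from $0$. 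Pick $x_0,\rho_0$ with $\int_{B_{\rho_0}(x_0)}f_j\,\ud x\ge m_j/2$ and a ball $B_r(x_0)$ of volume $2(m_1+m_2)$; then $|B_r(x_0)\setminus S|>0$, and on this set $\mu_i\le -V_j\le -\tfrac{m_j}{2}\,K(z_0)<0$, where $|z_0|=r+\rho_0$, using only that $K$ is radially non-increasing and positive and that $c_{ii}V_i\le 0$. No minimality is needed for this step. This is precisely the mechanism of the paper's proof, which derives $\inf_{B_r}(|c_{11}|V_1+V_2)\le \sup_{\R^N\setminus B_R}(|c_{11}|V_1+V_2)$ and concludes because the left-hand side strictly exceeds the limit of the right-hand side as $R\to\infty$.

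Conversely, in the one situation where the first variation genuinely becomes vacuous --- $K\equiv 0$ outside a bounded set, so that a configuration can have the whole positivity-range neighborhood of $\supp f_j$ saturated ($f_1+f_2=1$ there), hence $V_j=0$ a.e.\ on the free room and $\gamma_i=0$ --- your energy-comparison argument fails for exactly the same reason: the ``bounded reservoir placed in $\{f_1+f_2<1\}$ and within the short range where $K>0$'' is then precisely what does not exist, the relocated tail gains nothing, and the claimed strict decrease of order $\sigma$ is false. Ruling out such saturated competitors would require rearranging $f_j$ itself so as to create admissible room within range, and neither your sketch nor Lemmas \ref{mono}--\ref{subadd} (whose strictness assertions face the same obstruction) supply that. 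So the detour through the energy comparison is superfluous exactly when it works and insufficient exactly when it would be needed; the repair is to drop it and argue from \eqref{primaprimolemma} against free room near the bulk of phase $j$, as above --- noting that kernels vanishing outside a bounded set are not fully covered by this argument, nor, for that matter, by the paper's.
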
 

\begin{proof}
Assume by contradiction that $f_1$ has not compact support. Recalling the definition of $S$ in \eqref{defs}, we set $r:=(2\frac{m_1+m_2}{\omega_N})^{1/N}$ so that $|B_{r}\setminus S|>0$. For $R>0$ we now set  $\ffi^R_1:=f_1\chi_{(\R^N\setminus B_R)}$ and observe that for $R$ large enough we can find  $\psi^{R}\in L^{1}(\R^N;\R^+)$  such that $\psi^R\equiv 0$ in $S\cup(\R^N\setminus B_{r})$ and at the same time 
%, taking advantage from our choice of $r$, 
$\int_{\R^N}\ffi_1^R(x)\ud x=\int_{\R^N}\psi^R(x)\ud x>0$. 
Hence by \eqref{primaprimolemma} we have
$$
\int_{B_{r}}\psi^R(x)({c_{11}} V_1(x)-V_2(x))\ud x\ge \int_{\R^N\setminus B_R}\ffi_1^R(x)({c_{11}} V_1(x)-V_2(x))\ud x,
$$
or, equivalently,
\begin{multline*}
\int_{B_{r}}\psi^R(x)(|{c_{11}}| V_1(x)+V_2(x))\ud x\le \int_{\R^N\setminus B_R}\ffi_1^R(x)(|{c_{11}}| V_1(x)+V_2(x))\ud x.
\end{multline*}
Since $\int_{\R^N\setminus B_R}\ffi_1^R(x)\ud x=\int_{B_r}\psi^R(x)\ud x$, the previous inequality implies that
$$
\inf_{B_r}(|{c_{11}}| V_1+V_2)\le \sup_{\R^N\setminus B_R}(|{c_{11}}| V_1+V_2)
$$
which gives a contradiction for $R$ large enough. 
\end{proof}

\section{Qualitative properties of minimizers and some explicit solutions}\label{secqual}

In this Section we discuss some qualitative properties of the minimizers of $\Eab$, and for some specific choice of the coefficients ${c_{11}}$, ${c_{22}}$ we determine the explicit solution. 

\subsection{Some preliminary results}
First, we recall the classical Riesz inequality \cite{Riesz}. To this purpose, 
for any $m>0$ and $x_0\in\R^N$, we denote by $B^m(x_0)$  the ball centered in $x_0$ with $|B^m(x_0)|=m$ ($B^m$ if $x_0=0$). With a little abuse of notation, for any $x_0\in\R^N$ and  for any $f\in L^{1}(\R^N)$, we set $B^{f}(x_0):=B^{\|f\|_{L^1}}(x_0)$ ($B^{f}:=B^{\|f\|_{L^1}}$ if $x_0=0$).
Moreover, for every function $u\in L^1(\R^N;\R^+)$  we denote by $u^*$ the spherical symmetric nonincreasing rearrangement of $u$, satisfying 
\begin{equation}\label{simmetrizz}
\{u^{*} > t\} = B^{m_t} \text{ where } m_t:=|\{u>t\}| \qquad \text{ for all } t>0.
\end{equation}

% We recall the Riesz Lemma.
\begin{lemma}[{\bf Riesz inequality}]\label{riesz}
Let $f,\,g\in L^{1}(\R^N;[0,1])$ with $\|f\|_ {L^1},\|g\|_{L^1}>0$. Then, 
\begin{multline*}
\int_{\R^N}\int_{\R^N}f(x)\,g(y)\,K(x-y)\ud x\ud y\le \int_{\R^N}\int_{\R^N}f^*(x)\,g^*(y)\,K(x-y)\ud x\ud y\\
\le\int_{\R^N}\int_{\R^N}\chi_{B^f}(x)\,\chi_{B^g}(y)\,K(x-y)\ud x\ud y ,
\end{multline*}
where the first inequality is in fact an equality if and only if $f(\cdot)=f^*(\cdot-x_0)$ and $g(\cdot)=g^*(\cdot-x_0)$ for some $x_0\in\R^N$, whereas the second inequality holds with the equality if and only if $f^*=\chi_{B^{f}}$ and $g^*=\chi_{B^{g}}$.
\end{lemma}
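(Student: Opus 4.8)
The statement splits into two inequalities, and I would attack them with two different classical tools, treating the strict cases at the end.

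\textbf{First inequality.} This is exactly the Riesz rearrangement inequality. Since $K$ is radially symmetric and non-increasing in $|x|$, it is already symmetric decreasing, so the three–function Riesz inequality $\int_{\R^N}\int_{\R^N} f(x)g(y)K(x-y)\,dx\,dy\le \int_{\R^N}\int_{\R^N} f^*(x)g^*(y)K(x-y)\,dx\,dy$ applies directly, with no rearrangement of the kernel needed. Because $K\in L^1_{\loc}$ may be singular at the origin and/or unbounded at infinity (as for the Coulomb kernels), I would first prove the inequality for the bounded truncations $K_M:=\max(-M,\min(K,M))$, which remain radial and non-increasing, and then pass to the limit $M\to\infty$ by monotone/dominated convergence; the integrals are well defined in $\R\cup\{-\infty\}$ by the remark following \eqref{defJ}, and $f^*,g^*$ have bounded support. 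The equality case is Lieb's characterization of equality in Riesz's inequality for a strictly symmetric decreasing kernel, which forces $f$ and $g$ to be translates, by a \emph{common} vector $x_0$, of their symmetric rearrangements.

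\textbf{Second inequality.} Here I would freeze one variable at a time and invoke the bathtub principle. Set $\Phi:=g^*\ast K$, i.e. $\Phi(x)=\int_{\R^N} g^*(y)K(x-y)\,dy$, so that $\int_{\R^N}\int_{\R^N} f^*(x)g^*(y)K(x-y)\,dx\,dy=\int_{\R^N} f^*\,\Phi$. The crucial point is that $\Phi$ is radially symmetric and non-increasing. Radial symmetry is immediate; for monotonicity I would use the layer–cake formula $g^*=\int_0^\infty \chi_{\{g^*>t\}}\,dt=\int_0^\infty \chi_{B_{\rho(t)}}\,dt$, reducing the claim to the fact that $\chi_{B_\rho}\ast K$ is radially non-increasing for each centered ball $B_\rho$. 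The latter is a short reflection argument: for $|x_1|\le|x_2|$ on a common ray, reflection $\sigma$ across the bisecting hyperplane sends $A:=B_\rho(x_1)\setminus B_\rho(x_2)$ onto $B_\rho(x_2)\setminus B_\rho(x_1)$ while moving points away from the origin, so $(\chi_{B_\rho}\ast K)(x_1)-(\chi_{B_\rho}\ast K)(x_2)=\int_A \big(K(w)-K(\sigma w)\big)\,dw\ge0$ since $K$ is non-increasing. As $f^*$ and $\chi_{B^f}$ share the mass $\|f\|_{L^1}=|B^f|$ and $0\le f^*\le 1$, the bathtub principle applied to the radial non-increasing weight $\Phi$ gives $\int f^*\,\Phi\le\int \chi_{B^f}\,\Phi$, because placing all mass where $\Phi$ is largest, namely on the centered ball $B^f$, is optimal; this proves $\int\int f^*g^*K\le\int\int\chi_{B^f}g^*K$. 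Repeating the argument with the roles of the two variables exchanged (freezing $\chi_{B^f}$ and replacing $g^*$ by $\chi_{B^g}$, using that $\chi_{B^f}\ast K$ is likewise radial non-increasing) yields $\int\int\chi_{B^f}g^*K\le\int\int\chi_{B^f}\chi_{B^g}K$, and chaining the two bounds completes the second inequality.

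\textbf{Equality cases and the main obstacle.} For equality in the second inequality I would use the sharp bathtub principle: equality in $\int f^*\Phi\le\int\chi_{B^f}\Phi$ forces $f^*=\chi_{B^f}$ as soon as $\Phi$ is \emph{strictly} decreasing in $|x|$, since then $B^f$ is the unique optimal set of measure $|B^f|$ (and symmetrically for $g$). Strict monotonicity of $\Phi$ follows from the reflection identity once $K$ is strictly decreasing: the integrand $K(w)-K(\sigma w)$ is then strictly positive on the positive–measure set $A$, so each $\chi_{B_\rho}\ast K$, hence $\Phi=g^*\ast K$ (recall $\|g\|_{L^1}>0$), is strictly decreasing. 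I expect this to be the main delicate point, because the equality conclusions of Lemma \ref{riesz} genuinely need $K$ to be strictly decreasing (for a constant $K$ both inequalities are trivial equalities while neither conclusion holds); I would therefore make the strict‑monotonicity requirement explicit wherever the equality statements are used. The remaining technical care lies in legitimizing Riesz's inequality for the singular/unbounded kernels $K\in L^1_{\loc}$ via the truncation above, and in the bookkeeping of possible flat level sets $\{\Phi=\mathrm{const}\}$ in the bathtub step.
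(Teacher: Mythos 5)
The paper offers no proof of Lemma \ref{riesz}: it is ``recalled'' as a classical fact, with \cite{Riesz} covering the rearrangement inequality and \cite{LL} the bathtub/equality analysis. Your two ingredients are exactly the classical route those citations supply, and your execution is essentially sound: the three-function Riesz inequality applies after adding a constant to a bounded truncation of $K$ (the constant cancels because both sides carry the same masses $\|f\|_{L^1}\|g\|_{L^1}$), and your reflection argument for the radial monotonicity of $\chi_{B_\rho}\ast K$ is correct, since any $w\in B_\rho(x_1)\setminus B_\rho(x_2)$ lies, together with the origin, on the $x_1$-side of the bisecting hyperplane, whence $|\sigma w|\ge |w|$ and $K(w)-K(\sigma w)\ge 0$. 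For the bathtub step you can even avoid the finiteness hypotheses of the bathtub theorem: with $R$ the radius of $B^f$ and $\Phi:=g^*\ast K$, one has $\int_{\R^N}(\chi_{B^f}-f^*)\,\Phi\ud x=\int_{\R^N}(\chi_{B^f}-f^*)(\Phi-\Phi(R))\ud x\ge 0$ because both factors change sign exactly at $|x|=R$; equality then forces $f^*=\chi_{B^f}$ whenever $\Phi$ is strictly radially decreasing, which is your argument.

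Your observation that the equality characterizations are genuinely false for merely non-increasing $K$ (constant $K$ gives equality everywhere) is a real and worthwhile catch, not a pedantic one: the paper's standing assumption on $K$ is only monotonicity, and the kernel $K=\chi_{[-\rho,\rho]}$ of Remark \ref{controes} is admissible yet violates both equality statements; consequently the ``only if'' parts of Theorem \ref{proptutto}, which rest on Lemma \ref{riesz}, implicitly require strict decrease of $K$. Two smaller slips you should repair. First, $f^*$ and $g^*$ need \emph{not} have bounded support (take $f(x)=e^{-|x|}$); the limit $M\to\infty$ should instead be organized by splitting $K$ into positive and negative parts and using monotone convergence on each, noting that the positive part of $J_K$ is always finite (as $K_+$ is locally integrable and bounded away from the origin), so no $\infty-\infty$ ambiguity arises. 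Second, the equality case of the first inequality cannot be routed through the truncations $K_M$: equality in the limit does not yield equality for each $M$, and $K_M$ is never strictly decreasing. You must apply Lieb's characterization to $K$ itself, which requires invoking (or proving) the version for strictly symmetric-decreasing kernels that are possibly negative and unbounded below, as happens for $K_{C_1}$ and $K_{C_2}$, and which tacitly assumes the common value of the two integrals is finite.
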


The following lemma states that, for ${c_{11}}=0$, there exists a minimizer $(\tilde f_1,f_2)$ such that $\tilde f_1+f_2=1$ on the support of $\tilde f_1$. Its proof can be easily obtained by \eqref{energyV}; the details are left to the reader.

\begin{lemma}[{\bf Superlevels}]\label{corsuper}
Let $(f_1,f_2)$ be a minimizer of $\E_K^{0,{c_{22}}}$ in $\Am$. Then, there exists a unique $t>0$ such that the pair $(\tilde f_1,f_2)$ is still a minimizer of $\E_K^{0,{c_{22}}}$ in $\Am$ if and only if $\tilde f_1\in L^1(\R^N;\R^+)$ satisfies (i), (ii) and (iii) below:
\begin{itemize}
\item[(i)]$\int_{\R^N} \tilde f_1(x) \ud x = m_1$;
\item[(ii)] $\tilde f_1(x)= 1-f_2(x)$ if  $V_2(x)> t$;
\item[(iii)] $\tilde f_1(x) = 0$ if   $V_2(x) < t$.
\end{itemize}
Moreover, if $|\{ V_2=t\}|=0$, then $\tilde f_1$ is uniquely determined, and clearly $f_1=\tilde f_1$.

A similar statement holds true for the case ${c_{22}}=0$.
%Analogously, if  $(f_1,f_2)$ is a minimizer of $\E_K^{{c_{11}},0}$, then, there exists a unique $t>0$ such that the pair $(f_1,\tilde f_2)$ is still a minimizer of $\E^{{c_{11}},0}$ if and only if $\tilde f_2\in L^1(\R^N;\R^+)$ satisfies (i), (ii) and (iii) below:
%\begin{itemize}
%\item[(i)]$\int_{\R^N} \tilde f_2(x) \ud x = m_2$;
%\item[(ii)] $\tilde f_2(x)= 1-f_1(x)$ if  $V_1(x)> t$;
%\item[(iii)] $\tilde f_2(x) = 0$ if   $V_1(x) < t$.
%\end{itemize}
%Moreover, if $|\{ V_1=t\}|=0$, then $f_2=\tilde f_2$.
\end{lemma}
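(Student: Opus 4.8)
The plan is to exploit the fact that, when ${c_{11}}=0$, the energy \eqref{energyV} depends on the first phase only through a single linear term. Writing
\[
\E_K^{0,{c_{22}}}(f_1,f_2)={c_{22}}\int_{\R^N}f_2(x)\,V_2(x)\ud x-2\int_{\R^N}f_1(x)\,V_2(x)\ud x,
\]
I observe that, once $f_2$ (and hence $V_2=f_2\ast K$) is frozen to be the second component of the given minimizer, the first summand is a constant. Therefore, among all $\tilde f_1$ with $(\tilde f_1,f_2)\in\Am$, i.e.\ with $0\le \tilde f_1\le 1-f_2$ a.e.\ and $\int_{\R^N}\tilde f_1=m_1$, the pair $(\tilde f_1,f_2)$ is again a minimizer of $\E_K^{0,{c_{22}}}$ if and only if $\tilde f_1$ maximizes the linear functional $\tilde f_1\mapsto\int_{\R^N}\tilde f_1\,V_2\ud x$ over this constraint set. (Indeed $f_1$ itself is such a maximizer, since $(f_1,f_2)$ is a global minimizer, so any other maximizer yields exactly the same energy, and conversely.) This turns the problem into a purely pointwise optimization.

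The optimization is the classical bathtub principle: to maximize $\int \tilde f_1 V_2$ under $0\le \tilde f_1\le 1-f_2$ and prescribed mass $m_1$, one must saturate $\tilde f_1=1-f_2$ where $V_2$ is largest and set $\tilde f_1=0$ where $V_2$ is smallest. Concretely, I would introduce the nonincreasing capacity function
\[
\mu(s):=\int_{\{V_2>s\}}\big(1-f_2(x)\big)\ud x
\]
and define the threshold $t$ as the level at which $\mu$ crosses the value $m_1$, i.e.\ with $\mu(t)\le m_1\le \mu(t^-)$. A standard exchange argument then shows that any competitor with $\tilde f_1<1-f_2$ on a positive-capacity subset of $\{V_2>t\}$, or $\tilde f_1>0$ on a subset of $\{V_2<t\}$, can be strictly improved by moving mass towards higher values of $V_2$; hence every maximizer satisfies exactly (ii) and (iii), while (i) is just the mass constraint, which fixes the admissible fill on the level set $\{V_2=t\}$. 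When $|\{V_2=t\}|=0$, conditions (ii)--(iii) prescribe $\tilde f_1$ for a.e.\ $x$, so $\tilde f_1$ is unique and necessarily coincides with the given $f_1$.

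It remains to justify the existence, uniqueness and positivity of $t$. Continuity of $V_2$ (which follows from $K\in L^1_{\loc}$ and $f_2\in L^\infty$ with compact support, as already used before Corollary \ref{coroll2}) makes the superlevel sets $\{V_2>s\}$ open and $\mu$ well behaved; monotonicity of $\mu$ gives uniqueness of the crossing level, and strict monotonicity of the distribution of $V_2$ across $\{f_2<1\}$ rules out a plateau of $\mu$ at height $m_1$. The main point, and the step I expect to be the most delicate, is the positivity of $t$: since the minimizer has compact support by Proposition \ref{compactmin}, the set where $\tilde f_1$ is saturated must have finite measure, which forces $\{V_2>t\}\cap\{f_2<1\}$ to be bounded; given the decay of $V_2$ at infinity this can only happen for $t>0$. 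The case ${c_{22}}=0$ is entirely symmetric, obtained by interchanging the roles of $f_1$ and $f_2$.
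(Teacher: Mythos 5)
Your main argument coincides with the paper's intended proof: the paper states only that the lemma ``can be easily obtained by \eqref{energyV}'', which is exactly your observation that for $c_{11}=0$ the energy depends on the first phase through the single linear term $-2\int_{\R^N}\tilde f_1 V_2\,\mathrm{d}x$, so that with $f_2$ frozen the problem becomes a bathtub maximization of $\int_{\R^N}\tilde f_1 V_2\,\mathrm{d}x$ under $0\le \tilde f_1\le 1-f_2$ and $\int_{\R^N}\tilde f_1\,\mathrm{d}x=m_1$. Your equivalence between being a minimizer of $\E_K^{0,c_{22}}$ and maximizing this linear functional, and the exchange argument yielding (i)--(iii), are correct; this is the content of the lemma that the paper actually uses later (e.g.\ in Theorem \ref{00}).

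However, your treatment of the clause ``there exists a unique $t>0$'' has two genuine gaps. First, your claim that a plateau of $\mu(s)=\int_{\{V_2>s\}}(1-f_2)\,\mathrm{d}x$ at height $m_1$ cannot occur is false: for the paper's own minimizers (Theorem \ref{00}, Corollary \ref{minimoinquadrato} with $c_{11}=0$) the second phase is pure, i.e.\ $f_2\equiv 1$ and hence $1-f_2\equiv 0$, on an annulus sandwiched between superlevel sets of the radially strictly decreasing potential $V_2$, so that $\mu$ is constant, equal exactly to $m_1$, on a nondegenerate interval of levels. Every level in that interval satisfies the ``if and only if'' (they all single out the same class of $\tilde f_1$ a.e., precisely because $1-f_2$ vanishes on the region where the conditions differ), so literal uniqueness of $t$ requires either a different argument or a weaker reading of the statement. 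Second, your proof that $t>0$ assumes $V_2\to 0$ at infinity; this holds for $K_{C_N}$ with $N\ge 3$, but the lemma is stated for general admissible kernels, and for instance for $K=K_{C_1}$ or $K=K_{C_2}$ one has $V_2\to -\infty$, and $V_2$ can even be everywhere negative, which forces every admissible threshold to be negative (note also that the sign of $t$ is not invariant under adding a constant to $K$, an operation that changes no minimizer). So the characterization (i)--(iii) --- the part of the lemma the paper relies on --- is correctly established by your argument, but the ``unique $t>0$'' clause is not, and cannot be, by the route you chose.
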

We recall that the sets $G_i$ are defined in \eqref{defac}.
\begin{corollary}\label{concentra}
Let $(f_1,f_2)$ be a minimizer for $\mathcal{E}_K^{0,{c_{22}}}$  in $\Am$. Then,
for any measurable set 
$E_{1}\subset G_1\setminus G_2$  
with $|E_1|=\int_{G_1\setminus G_2} f_1(x)\ud x$,   
the function 
$$
\tilde f_1(x):=\left\{\begin{array}{ll}
\chi_{E_1} &\text{ if }x\in  G_1\setminus G_2\\
f_1(x)&\text{otherwise in } \R^N, 
\end{array}\right.
$$
satisfies
$$
\mathcal{E}_K^{0,{c_{22}}}(\tilde f_1, f_2) \le\mathcal{E}_K^{0,{c_{22}}}(f_1,f_2).
$$

A similar statement holds true in the case ${c_{22}}=0$.

\end{corollary}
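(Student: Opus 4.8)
The plan is to exploit the fact that, when $c_{11}=0$, the density $f_1$ enters the energy only through the cross term. From \eqref{energyV} with $c_{11}=0$ we have $\E_K^{0,{c_{22}}}(f_1,f_2)={c_{22}}\int_{\R^N}f_2V_2\ud x-2\int_{\R^N}f_1V_2\ud x$, and since $\tilde f_1$ differs from $f_1$ only on $G_1\setminus G_2$ while $f_2$ is left untouched, the term ${c_{22}}\int f_2V_2$ is unchanged. Hence
$$\E_K^{0,{c_{22}}}(\tilde f_1,f_2)-\E_K^{0,{c_{22}}}(f_1,f_2)=-2\int_{G_1\setminus G_2}(\tilde f_1(x)-f_1(x))\,V_2(x)\ud x,$$
and it suffices to show that this integral vanishes. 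First, though, I would check admissibility: on $G_1\setminus G_2$ one has $f_1\in(0,1)$ and $f_2\notin(0,1)$, while the constraint $f_1+f_2\le 1$ forces $f_2<1$; together these give $f_2=0$ on $G_1\setminus G_2$. Consequently $\tilde f_1+f_2=\chi_{E_1}\le 1$ there, the constraint is untouched elsewhere, and $\int\tilde f_1=\int f_1=m_1$ because $|E_1|=\int_{G_1\setminus G_2}f_1$; thus $(\tilde f_1,f_2)\in\Am$.

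The key step is to observe that $V_2$ is constant on $G_1\setminus G_2$. Having just seen that $f_2=0$ on $G_1\setminus G_2$, we have $f_1+f_2=f_1<1$ there, so $G_1\setminus G_2\subseteq G_1\setminus S$. The first variation \eqref{sommaconst} (with $i=1$, $j=2$ and $c_{11}=0$) gives $-V_2=\gamma_1$ a.e. in $G_1\setminus S$, i.e.\ $V_2$ is a.e.\ equal to a constant on $G_1\setminus S$, hence on $G_1\setminus G_2$. Since $\tilde f_1-f_1=\chi_{E_1}-f_1$ has zero integral over $G_1\setminus G_2$, the displayed integral equals this constant times $\int_{G_1\setminus G_2}(\chi_{E_1}-f_1)\ud x=0$. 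Therefore $\E_K^{0,{c_{22}}}(\tilde f_1,f_2)=\E_K^{0,{c_{22}}}(f_1,f_2)$, which in particular yields the claimed inequality.

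The only genuinely delicate point is the identification $f_2=0$ on $G_1\setminus G_2$, since this is exactly what places the set inside the region $G_1\setminus S$ on which the first variation pins down $V_2$; everything else reduces to the one-line computation above. I would also remark that the argument in fact produces equality rather than merely $\le$, reflecting that on $G_1\setminus G_2$ the two admissible profiles $f_1$ and $\chi_{E_1}$ (of the same mass) are integrated against the same constant potential $V_2$ and hence carry the same energy, independently of the location of $E_1$ within $G_1\setminus G_2$. The statement for $c_{22}=0$ follows by interchanging the roles of the two phases, exactly as in the proof of Lemma \ref{primolemma}.
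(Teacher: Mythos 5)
Your proof is correct. The paper itself writes no proof of this statement: it is positioned as a corollary of the Superlevels Lemma \ref{corsuper}, the intended argument being that a minimizer $f_1$ must satisfy the bathtub conditions (ii)--(iii) for some level $t$, that on $G_1\setminus G_2$ (where $f_2=0$ and $0<f_1<1$) neither condition can be active so that $V_2=t$ a.e.\ there, and that $\tilde f_1$ then still satisfies (i)--(iii), making $(\tilde f_1,f_2)$ again a minimizer. You arrive at the same pivotal fact --- $V_2$ is a.e.\ constant on $G_1\setminus G_2$ --- by a different citation: the first-variation identity \eqref{sommaconst} of Lemma \ref{primolemma}, after the (correct and necessary) observation that $f_2=0$ on $G_1\setminus G_2$ places this set inside $G_1\setminus S$. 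From there the two arguments coincide: with $c_{11}=0$ the energy depends on $f_1$ only through the affine term $-2\int f_1 V_2$, the rearrangement preserves mass, and integrating a mass-preserving perturbation against a constant potential leaves the energy unchanged, so both routes in fact yield equality rather than mere inequality. Your version has the advantage of resting on a lemma the paper proves in full (Lemma \ref{primolemma}), whereas the proof of Lemma \ref{corsuper} is itself left to the reader; the paper's route has the advantage of exhibiting $(\tilde f_1,f_2)$ directly as another minimizer, which is how the corollary is used later (e.g.\ in Lemma \ref{solounaomogenea} and Theorem \ref{00}). Your explicit admissibility check ($f_2=0$ on $G_1\setminus G_2$, hence $\tilde f_1+f_2\le 1$ and the mass constraint is preserved) is a worthwhile addition that the paper's statement glosses over entirely.
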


\subsection{The strongly attractive case {${c_{11}} + {c_{22}} \le -2$}}

In the following theorem we characterize the minimizers for every ${c_{11}}, \, {c_{22}} $ such that ${c_{11}} +{c_{22}} \le -2$ and 
$\max \{{c_{11}}, {c_{22}}\} \ge -1$.
\begin{theorem}\label{proptutto}
Let ${c_{11}}+{c_{22}}\le -2$. The following statements hold true.
\begin{itemize}
\item[(i)] if ${c_{11}}={c_{22}}=-1$, then $(f_1,f_2)$ is a minimizer of $\Eab$ in $\Am$ if and only if $f_1+f_2=\chi_{B^{m_1+m_2}(x_0)}$, for some $x_0\in\R^N$;
\item[(ii)] if ${c_{11}}=-1$ and ${c_{22}}<-1$, then $(f_1,f_2)\in\Am$ is a minimizer  of $\Eab$ in $\Am$  if and only if $f_1+f_2=\chi_{B^{m_1+m_2}(x_0)}$ for some $x_0\in\R^N$, and $f_2=\chi_{B^{m_2}(y_0)}$ for some $y_0\in\R^N$  with $B^{m_2}(y_0)\subset B^{m_1+m_2}(x_0)$;
\item[(ii')] if ${c_{22}}=-1$ and ${c_{11}}<-1$, then $(f_1,f_2)\in\Am$ is a minimizer  of $\Eab$ in $\Am$  if and only if $f_1+f_2=\chi_{B^{m_1+m_2}(x_0)}$
for some $x_0\in\R^N$ and $f_1=\chi_{B^{m_1}(y_0)}$ for some $y_0\in\R^N$ with $B^{m_1}(y_0)\subset B^{m_1+m_2}(x_0)$;
\item[(iii)] if ${c_{22}}<-1$ and $-1<{c_{11}}\le 0$, then $(f_1,f_2)\in\Am$ is a minimizer  of $\Eab$ in $\Am$  if and only if $f_1+f_2=\chi_{B^{m_1+m_2}(x_0)}$ and $f_2=\chi_{B^{m_2}(x_0)}$ for some $x_0\in\R^N$;
\item[(iii')] if ${c_{11}}<-1$ and $-1<{c_{22}}\le 0$, then $(f_1,f_2)\in\Am$ is a minimizer  of $\Eab$ in $\Am$  if and only if $f_1+f_2=\chi_{B^{m_1+m_2}(x_0)}$ and $f_1=\chi_{B^{m_1}(x_0)}$ for some $x_0\in\R^N$.
\end{itemize}
\end{theorem}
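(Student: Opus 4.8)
The plan is to deduce all five cases from the Riesz inequality (Lemma \ref{riesz}) after a single algebraic rewriting of the energy. Setting $f:=f_1+f_2$ and substituting $f_1=f-f_2$ into \eqref{energy}, a direct expansion gives the identity
\[
\Eab(f_1,f_2) = c_{11}\,J_K(f,f) - 2(c_{11}+1)\,J_K(f,f_2) + (c_{11}+c_{22}+2)\,J_K(f_2,f_2).
\]
The whole point of this form is a sign miracle: since the problem is symmetric under the exchange $(c_{11},m_1,f_1)\leftrightarrow(c_{22},m_2,f_2)$, I may assume $c_{11}\ge c_{22}$, hence $c_{11}\ge -1$ (this covers (i),(ii),(iii); the cases (ii') and (iii') follow by the exchange). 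Then, using the standing hypotheses $c_{11}\le 0$ and $c_{11}+c_{22}\le -2$, all three coefficients are nonpositive, namely $c_{11}\le 0$, $-2(c_{11}+1)\le 0$, and $c_{11}+c_{22}+2\le 0$. Thus minimizing $\Eab$ becomes the problem of simultaneously \emph{maximizing} the three interaction functionals $J_K(f,f)$, $J_K(f,f_2)$, $J_K(f_2,f_2)$ subject to $\int f=m_1+m_2$, $\int f_2=m_2$, and the admissibility chain $0\le f_2\le f\le 1$.

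Next I would apply Lemma \ref{riesz} termwise. Since $f,f_2\in L^1(\R^N;[0,1])$ carry the prescribed masses, Riesz gives $J_K(f,f)\le J_K(\chi_{B^{m_1+m_2}},\chi_{B^{m_1+m_2}})$, $J_K(f,f_2)\le J_K(\chi_{B^{m_1+m_2}},\chi_{B^{m_2}})$ and $J_K(f_2,f_2)\le J_K(\chi_{B^{m_2}},\chi_{B^{m_2}})$. Multiplying by the nonpositive coefficients and summing yields
\begin{multline*}
\Eab(f_1,f_2)\ge c_{11}\,J_K(\chi_{B^{m_1+m_2}},\chi_{B^{m_1+m_2}}) - 2(c_{11}+1)\,J_K(\chi_{B^{m_1+m_2}},\chi_{B^{m_2}})\\
+ (c_{11}+c_{22}+2)\,J_K(\chi_{B^{m_2}},\chi_{B^{m_2}}),
\end{multline*}
and the right-hand side is exactly the energy of the admissible concentric pair $(\chi_{B^{m_1+m_2}(x_0)\setminus B^{m_2}(x_0)},\chi_{B^{m_2}(x_0)})$. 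This simultaneously shows the infimum is attained and identifies its value, so the characterization of \emph{all} minimizers reduces to analyzing when equality holds in the three Riesz bounds, via the equality cases recorded in Lemma \ref{riesz}.

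The four subcases then separate according to which coefficients vanish. In case (i), $c_{11}=c_{22}=-1$ kills the last two coefficients, so only $-J_K(f,f)$ survives and equality forces just $f=\chi_{B^{m_1+m_2}(x_0)}$, with $f_2$ free; this gives all splittings $f_1+f_2=\chi_B$. In case (ii), $c_{11}=-1$ makes the cross coefficient vanish, so the bound decouples into the two \emph{independent} equalities $f=\chi_{B^{m_1+m_2}(x_0)}$ and $f_2=\chi_{B^{m_2}(y_0)}$ (both coefficients being strictly negative); nothing links the centers, so the only remaining constraint is $f_2\le f$, i.e.\ $B^{m_2}(y_0)\subset B^{m_1+m_2}(x_0)$, which is precisely the nested-but-not-concentric description. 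In case (iii), $-1<c_{11}\le 0$ makes the cross coefficient strictly negative, so equality in $J_K(f,f_2)\le J_K(\chi_{B^{m_1+m_2}},\chi_{B^{m_2}})$ is forced; by the equality case of Lemma \ref{riesz} this requires $f$ and $f_2$ to be characteristic functions of \emph{concentric} balls about a common center, giving the unique (up to translation) minimizer with $f_2$ an inner ball and $f_1$ a concentric annulus.

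The main obstacle is finding the right change of variables. The naive rewriting $\Eab=-J_K(f,f)+(c_{11}+1)J_K(f_1,f_1)+(c_{22}+1)J_K(f_2,f_2)$ is useless in case (iii), where $(c_{11}+1)J_K(f_1,f_1)$ carries a \emph{positive} coefficient and Riesz, being an upper bound, cannot control it from below. Passing instead to the variables $(f,f_2)$ and exploiting $c_{11}\le 0$ is exactly what renders all three coefficients nonpositive. The one genuinely delicate point that remains is the bookkeeping of the equality cases in Lemma \ref{riesz}: one must track carefully that the cross term \emph{vanishes} in (ii) (allowing free nesting of the inner ball) while it is \emph{strictly negative} in (iii) (forcing concentricity), which is what distinguishes the degenerate nested family from the rigid concentric solution.
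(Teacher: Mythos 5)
Your proposal is correct and is essentially the paper's own proof: the paper uses exactly the same identity $\Eab(f_1,f_2)=c_{11}\,J_K(f_1+f_2,f_1+f_2)-2(c_{11}+1)\,J_K(f_2,f_1+f_2)+(c_{11}+c_{22}+2)\,J_K(f_2,f_2)$ and applies Lemma \ref{riesz} termwise, with (i) and (ii) read off from the specializations at $c_{11}=-1$. Your additional bookkeeping of the Riesz equality cases (cross term vanishing in (ii) versus strictly negative in (iii)) just makes explicit what the paper leaves to the reader.
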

%\vskip -0.5cm
\begin{figure}[h]
\centering
{\def\svgwidth
{300pt}
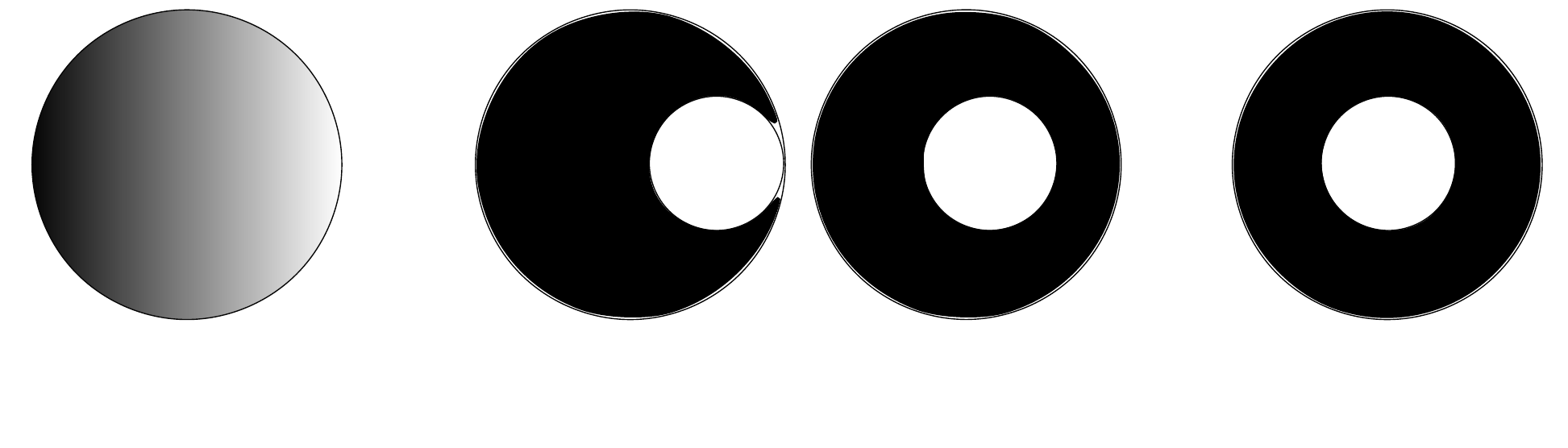}
\vskip -0.2cm
\caption{The phase $f_1$ is the black one, whereas the phase $f_2$ is white. The first cartoon represents the minimizers  in (i). In this case,  all the configurations $(f_1,f_2)$ such that $f_1+f_2=\chi_{B^{m_1+m_2}}$ are minimizers of the energy. The second and the third cartoons are two examples of minimizers in the case (ii). The last cartoon is the unique minimizer in the case (iii). Minimizers in cases (ii') and (iii') can be obtained by the balls above switching the balck parts with the white ones.
}
\end{figure}

\begin{proof}
We prove only (i), (ii), and (iii), being the proofs of (ii') and (iii') 
the same as to the ones of (ii) and (iii), respectively.

It is easy to see that
\begin{multline*}
\Eab(f_1,f_2)
={c_{11}}\,J_K(f_1+ f_2,f_1+f _2)-2({c_{11}}+1)\,J_K(f_2,f_1+f_2)\\
+({c_{11}}+{c_{22}}+2)\,J_K(f_2,f_2).
\end{multline*}
The claim (iii) follows immediately by applying Lemma \ref{riesz} to each of the three addenda above.
Moreover, 
\begin{align*}
&\mathcal{E}_K^{-1,-1}(f_1,f_2)=-J_K(f_1+f_2,f_1+f_2),\\
&\mathcal{E}_K^{-1,{c_{22}}}(f_1,f_2)=-J_K(f_1+f_2,f_1+f_2)+({c_{22}}+1)\,J_K(f_2,f_2)
\end{align*}
and hence (i) and (ii) easily follow by applying once again Lemma \ref{riesz}.
\end{proof}

The next proposition gives a characterization for $N=1$ of the minimizer of $\mathcal{E}_K^{{c_{11}},{c_{22}}}$ in the case ${c_{11}}, \, {c_{22}} < -1$ which is left open in Theorem \ref{proptutto}.

\begin{proposition}\label{oned}
Let $N=1$ and ${c_{11}},\,{c_{22}}<-1$. Then 
$$
(f_1,f_2)=(\chi_{[-m_1,0]},\chi_{[0,m_2]})\text{ and }(f_1,f_2)=(\chi_{[0,m_1]},\chi_{[-m_2,0]}),
$$
are (up to a translation) the unique minimizers of $\Eab$ in $\Am$.
\end{proposition}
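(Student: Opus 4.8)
The plan is to reduce the whole statement to the Riesz inequality by means of a single algebraic rearrangement of the energy that makes all the competing interactions pull in the same direction. Writing $g:=f_1+f_2$, the starting point is the identity
\[
\Eab(f_1,f_2)=-J_K(g,g)+(c_{11}+1)\,J_K(f_1,f_1)+(c_{22}+1)\,J_K(f_2,f_2),
\]
valid for every $(f_1,f_2)\in\Am$; one just expands $J_K(g,g)=J_K(f_1,f_1)+2J_K(f_1,f_2)+J_K(f_2,f_2)$ and collects the coefficients. The point of this splitting is that, since $c_{11},c_{22}<-1$, the three coefficients $-1$, $c_{11}+1$ and $c_{22}+1$ are \emph{all} strictly negative; hence, to lower the energy, each of the three quadratic terms separately wants its argument to be as concentrated as possible.

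Next I would apply the Riesz inequality (Lemma \ref{riesz}) to each term. Since $g\le 1$, $f_1\le 1$, $f_2\le 1$ with masses $m_1+m_2$, $m_1$, $m_2$, Lemma \ref{riesz} yields $J_K(g,g)\le J_K(\chi_{B^{m_1+m_2}},\chi_{B^{m_1+m_2}})$ and the analogous bounds for $f_1,f_2$. Multiplying by the negative coefficients reverses these inequalities, so
\[
\Eab(f_1,f_2)\ge -J_K(\chi_{B^{m_1+m_2}},\chi_{B^{m_1+m_2}})+(c_{11}+1)J_K(\chi_{B^{m_1}},\chi_{B^{m_1}})+(c_{22}+1)J_K(\chi_{B^{m_2}},\chi_{B^{m_2}}),
\]
a universal lower bound depending only on the masses.

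Here is where $N=1$ enters decisively: this lower bound is attained exactly when $f_1$, $f_2$ and $f_1+f_2$ are \emph{simultaneously} (characteristic functions of) balls with the prescribed masses, and in dimension one this is possible — it is precisely the two-tangent-segment configuration $f_1=\chi_{[-m_1,0]}$, $f_2=\chi_{[0,m_2]}$, for which $f_1$, $f_2$ and $f_1+f_2=\chi_{[-m_1,m_2]}$ are all intervals. (In $N\ge 2$ two disjoint balls can never tile a ball, which is exactly why the bound is not sharp and the problem remains open there.) This already shows that the two tangent segments are minimizers.

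Finally, for the uniqueness claim I would run the equality analysis of Lemma \ref{riesz} backwards: at a minimizer all three Riesz inequalities must be equalities, and the equality cases of Lemma \ref{riesz} then force $f_1=\chi_{I_1}$, $f_2=\chi_{I_2}$ and $f_1+f_2=\chi_{J}$ with $I_1,I_2,J$ intervals of masses $m_1,m_2,m_1+m_2$. Because $f_1+f_2\le 1$ and $|I_1|+|I_2|=|J|$, the intervals $I_1,I_2$ must be disjoint with $I_1\cup I_2=J$, i.e. adjacent; up to a translation this leaves exactly the two configurations in the statement. I expect the only delicate point to be this last rigidity step — converting the term-by-term equality cases of the Riesz inequality into $f_i=\chi_{I_i}$ — which relies on the sharp characterization of equality already recorded in Lemma \ref{riesz} (and, implicitly, on the strict monotonicity of $K$). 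Everything else is bookkeeping around the decomposition and the observation that dimension one is what makes the lower bound achievable.
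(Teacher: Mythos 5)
Your proof is correct and is essentially the paper's own argument: the paper writes $\Eab(f_1,f_2)=\mathcal{E}_K^{c_{11},-1}(f_1,f_2)+(c_{22}+1)\,J_K(f_2,f_2)$ and invokes Theorem \ref{proptutto}(ii'), whose proof rests on precisely your three-term identity $-J_K(f_1+f_2,f_1+f_2)+(c_{11}+1)J_K(f_1,f_1)+(c_{22}+1)J_K(f_2,f_2)$ together with term-by-term application of Lemma \ref{riesz} and its equality cases. The only difference is organizational: you inline the decomposition and the rigidity analysis that the paper factors through its earlier theorem.
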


\begin{proof}
It is easy to see that for any $(f_1,f_2)$
$$
\Eab(f_1,f_2)=\mathcal{E}_K^{{c_{11}},-1}(f_1,f_2)+({c_{22}}+1)J_K(f_2,f_2);
$$
since the second addendum is minimized when $f_2$ is the characteristic function of an interval, the claim follows by Theorem \ref{proptutto} (ii').
\end{proof}

\begin{remark}\label{cong}
{
\rm
In the general multi-dimensional case,  we do not know the explicit form of the minimizers if ${c_{11}},\, {c_{22}}< -1$.  One could guess that $f_i$ are characteristic functions as in the Coulomb case considered in Proposition \ref{coroll1.5} where, by means of first variation techniques, we can exclude that the solution is given by two tangent balls as well as by a ball and a concentric annulus around it. A natural issue to consider is then the asymptotic behaviour of minimizers  for ${c_{11}},\,{c_{22}}$ which tend to the boundary (and at infinity) of the region $\{{c_{11}}, \, {c_{22}} < -1\}$. In fact, there are many interesting limits that one could study:
\begin{itemize}
\item[(1)] ${c_{11}}< -1$, ${c_{22}}_n \nearrow - 1$ . Let $(f_1^{n},f_2^{n})\in\Am$ ba a minimizer of $\mathcal{E}^{{c_{11}},{c_{22}}_n}$ in $\Am$. Notice that the limit problem does not admit a unique solution. Nevertheless, we expect that, up to a unique translation, $f^n_1$ and $f^n_1+f^n_2$ converge strongly in $L^1$ to characteristic functions of two innerly tangent balls. Indeed, this is the minimizer for ${c_{11}}, {c_{22}} <1$,  among the family of pairs of nested balls.
\item[(2)] ${c_{11}},{c_{22}}< -1$, ${c_{11}}, {c_{22}} \nearrow - 1$.  In this case the limit problem is the most degenerate one for which it seems difficult to have a clear guess.
\item[(3)] ${c_{11}} < -1$, $ {c_{22}}_n \to - \infty$. In this case we expect that the second phase tends to a ball, while the first phase tends to the characteristic function of a set which is not a ball.
\item[(4)] ${c_{11}}, \, {c_{22}} \to -\infty$:  In this case we have that the two phases converge to two tangent balls. This is precisely the content of Proposition \ref{tangball} below. 
\end{itemize}
}
\end{remark}

\begin{proposition}\label{tangball}
Let $\{{c_{11}}_n\},\{{c_{22}}_n\}\subset\R$ be such that ${c_{11}}_n, \, {c_{22}}_n\to-\infty$.  For any $n\in\N$, let $(f_1^{n},f_2^{n})\in\Am$ be a minimizer of $\mathcal{E}_K^{{c_{11}}_n,{c_{22}}_n}$ in $\Am$. Then, up to a unique translation, $f^n_1,\,f^n_2$ converge strongly in $L^1$ to characteristic functions of two tangent balls, i.e., there exists a family of translations $\{\tau_n\}$ and a unitary vector $\nu\in\R^N$, such that 
$$
f_1^n(\cdot-\tau_n)\to \chi_{B^{m_1}},\quad f_2^n(\cdot-\tau_n)\to \chi_{B^{m_2}(r\,\nu)}, \quad\text{with $r:=(\textstyle {\frac{m_1}{\omega_N}})^{\frac 1 N}$.}
$$
\end{proposition}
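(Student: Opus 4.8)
The plan is to exploit that, as $c_{11}^n,c_{22}^n\to-\infty$, the self-interaction terms dominate the energy and, by the Riesz inequality, force each phase to concentrate on a ball, while the cross term (whose coefficient is fixed at $-2$) selects the mutual position of the two balls. Concretely, set $a_i:=J_K(\chi_{B^{m_i}},\chi_{B^{m_i}})$ and $r_i:=(m_i/\omega_N)^{1/N}$. By Lemma \ref{riesz}, $a_i$ is the maximum of $J_K(f,f)$ over $f\in L^1(\R^N;[0,1])$ with $\int f=m_i$, attained precisely on translates of $\chi_{B^{m_i}}$. Comparing the minimizer $(f_1^n,f_2^n)$ with the competitor given by two externally tangent balls $(\chi_{B^{m_1}},\chi_{B^{m_2}(z_0)})$, $|z_0|=r_1+r_2$, minimality yields
\[
|c_{11}^n|\,(a_1-J_K(f_1^n,f_1^n))+|c_{22}^n|\,(a_2-J_K(f_2^n,f_2^n))\le 2\big(J_K(f_1^n,f_2^n)-J_K(\chi_{B^{m_1}},\chi_{B^{m_2}(z_0)})\big).
\]
Both summands on the left are nonnegative (since $c_{ii}^n\le0$ and, by Riesz, $J_K(f_i^n,f_i^n)\le a_i$), while the right-hand side is bounded above by the $n$-independent constant $2(J_K(\chi_{B^{m_1}},\chi_{B^{m_2}})-J_K(\chi_{B^{m_1}},\chi_{B^{m_2}(z_0)}))$, again by Riesz. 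Dividing by $|c_{ii}^n|\to\infty$ gives $J_K(f_i^n,f_i^n)\to a_i$ for $i=1,2$.

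\textbf{Compactness and identification of the limits.} Arguing as in Theorem \ref{esist}, and using the near-optimality above to rule out the splitting of each single phase, together with the fact that separating the two phases is strictly penalized by the cross term (because the value of $J_K$ on tangent balls is strictly larger than its value at infinity, resp. because $K\to-\infty$), one finds a single sequence of translations $\tau_n$ such that, up to subsequences, $f_i^n(\cdot-\tau_n)\to g_i$ tightly with $(g_1,g_2)\in\A_{m_1,m_2}$. By continuity of $J_K$ with respect to tight convergence, $J_K(g_i,g_i)=\lim_nJ_K(f_i^n,f_i^n)=a_i$, so the equality case in Lemma \ref{riesz} forces $g_i=\chi_{B^{m_i}(z_i)}$. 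Translating so that $z_1=0$ and writing $g_2=\chi_{B^{m_2}(z)}$, the bounds $0\le f_i^n\le1$ and the conservation of mass upgrade the tight convergence to strong $L^1$ convergence to these characteristic functions.

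\textbf{Tangency.} For every pair of balls $\chi_{B^{m_1}(a_1)},\chi_{B^{m_2}(a_2)}$ with disjoint interiors — an admissible competitor — minimality combined with $J_K(f_i^n,f_i^n)\le a_i$ gives, after the same rearrangement as above, $J_K(f_1^n,f_2^n)\ge J_K(\chi_{B^{m_1}(a_1)},\chi_{B^{m_2}(a_2)})$. Letting $n\to\infty$ we obtain $J_K(g_1,g_2)\ge\sup J_K$, the supremum being taken over all disjoint pairs of balls of masses $m_1,m_2$, which (since $K$ is non-increasing) is attained at the tangent configuration. As $g_1,g_2$ are themselves disjoint balls, the reverse inequality is immediate, whence $|z|=r_1+r_2$: the two balls are tangent, and $\nu:=z/|z|$ is the unit vector in the statement. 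The usual subsequence argument then yields convergence of the whole sequence and the uniqueness of the limiting translation.

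\textbf{Main obstacle.} The delicate points are the uniform-in-$n$ compactness in the second step — one must adapt the concentration-compactness arguments of Theorem \ref{esist} to a sequence of minimizers of \emph{different} functionals, and pass to the limit in the quadratic (hence not weakly continuous) functionals $J_K(f_i^n,f_i^n)$ — and the strictness needed in the tangency step: concluding $|z|=r_1+r_2$ rather than merely $|z|\ge r_1+r_2$ requires that $J_K$ be strictly decreasing in the distance between two disjoint balls, i.e. that $K$ be strictly decreasing on the relevant range; otherwise tangency can only be asserted up to a plateau of $K$.
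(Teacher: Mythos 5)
Your proposal is correct and follows essentially the same route as the paper: compare the minimizer with the tangent-balls competitor and use the Riesz inequality to deduce $J_K(f_i^n,f_i^n)\to\max$, invoke the compactness machinery of Theorem \ref{esist} to identify the limits of the two phases as balls (strongly in $L^1$), and finally use minimality of the cross term against disjoint-ball competitors to force tangency. The differences are only organizational --- the paper uses two separate translation sequences (one per phase, obtained by viewing each $f_i^n$ as a minimizing sequence of its own one-phase problem) and a quantified modulus $\omega(\lambda_n)$ in the tangency step --- and even the caveat you flag about strict monotonicity of the ball--ball interaction near tangency is present, implicitly, in the paper's assertion that $\omega$ vanishes only at $1$.
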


\begin{proof}
First, notice that there exists a constant $C$ such that 
$$
-2 J_K(f_1^n,f_2^n) \ge C, \quad {c_{11}}_n\, J_K(f_1^n,f_1^n) \ge {c_{11}}_n\, I_{m_1,0}^{-1,0},  
\quad {c_{22}}_n\,J_K(f_2^n,f_2^n) \ge {c_{22}}_n \,I_{0,m_2}^{0,-1},
$$
so that
\begin{equation}\label{eq1}
I_{m_1,m_2}^{{c_{11}}_n,{c_{22}}_n} =\E_K^{{c_{11}}_n,{c_{22}}_n}(f_1^n,f_2^n) \ge {c_{11}}_n\,I_{m_1,0}^{-1,0} + {c_{22}}_n  \,I_{0,m_2}^{0,-1} + C.
\end{equation}
On the other hand, 
\begin{equation}\label{eq2}
I_{m_1,m_2}^{{c_{11}}_n,{c_{22}}_n} \le \E_K^{{c_{11}}_n,{c_{22}}_n}(\chi_{B^{m_1}},\chi_{B^{m_2}(r\,\nu)}) = 
{c_{11}}_n\,I_{m_1,0}^{-1,0} + {c_{22}}_n \,I_{0,m_2}^{0,-1} + C,
\end{equation}
which, togehter with \eqref{eq1}, yields 
$$
J_K(f_1^n,f_1^n) \to I_{m_1,0}^{-1,0}, \qquad J_K(f_2^n,f_2^n) \to I_{0,m_2}^{0,-1}.
$$ 
Therefore, by Theorem \ref{esist} applied to $I_{m_1,0}^{-1,0}$ and $I_{0,m_2}^{0,-1}$, there exist two sequences of translations $\{\tau_i^n\}$ (for $i=1,2$)  such that 
$$
f_1^n(\cdot - \tau_1^n) \to  \chi_{B^{m_1}}, \qquad f_2^n(\cdot - \tau_2^n) \to  \chi_{B^{m_2}}\qquad\text{strongly in }L^1.
$$
It remains to prove that $|\tau_1^n-\tau_2^n|\to r$ as $n\to\infty$. Set   
$$
\lambda_n :=\frac{|\tau_1^n-\tau_2^n|}{r}.
$$
Notice that $\liminf_{n\to\infty}\lambda_n\ge 1$ (otherwise, for $n$ large, $f_1^n$ and $f_2^n$ would be close in $L^1$ to characteristic functions of two intersecting balls, so that $(f_1^n,f_2^n)$ would not be admissible).
Up to a subsequence, we can assume that $\limsup_{n\to\infty}\lambda_n=\lim_{n\to\infty}\lambda_n=:\lambda$, with $\lambda\ge 1$. Then, set 
$$
\tilde f_1^n:=\chi_{B^{m_1}(\tau_1^n)},\quad \tilde f^n_2:=\chi_{B^{m_2}(\tau_2^n)};
$$ 
notice that $\|\tilde f_i^n- f_i^n\|_{L^1}\to 0$ as $n\to\infty$ for  $i=1,2$. Then, by the lower semicontinuity property of $J_K$ with respect to the strong $L^1$ convergence, we get 
$$
\liminf_n J_K(f_1^n,f_2^n)-J_K(\tilde f_1^n,\tilde f_2^n) \ge 0
%\quad\text{ as }n\to\infty.
$$
We conclude 
\begin{multline*}
I^{{c_{11}}_n,{c_{22}}_n}_{m_1,m_2}\ge \mathcal{E}_K^{{c_{11}}_n,{c_{22}}_n}(\tilde f_1^n,\tilde f_2^n)+\rho(n)\ge \E_K^{{c_{11}}_n,{c_{22}}_n}(\chi_{B^{m_1}},\chi_{B^{m_2}( r \nu)})+\rho(n)+\omega(\lambda_n),
\end{multline*}
where $\rho(n)\to 0$ as $n\to\infty$ and $\omega:[1,+\infty)\to [0,+\infty)$ is an increasing function vanishing at $1$.
By minimality it easily follows that $\lambda=1$ and hence the claim.
\end{proof}

\subsection{The weakly attractive case {${c_{11}} +  {c_{22}} > -2$}}
Here we will consider the  case ${c_{11}} +  {c_{22}} > -2$, and we will characterize the solution only for the purely weakly attractive case $0\ge {c_{11}}, \,  {c_{22}} > -1$  with
$({c_{11}}+1)m_1=({c_{22}}+1)m_2$. Moreover,  
 we will assume that $K$ is positive definite, according to definition \eqref{posdef}.
Notice that this implies that the functional 
$J_K(\ffi,\ffi)$
is strictly convex.

\begin{lemma}\label{fact}
Let $K$ be positive definite.
 For any $-1<c<1$ and for any $m>0$, the (unique up to a translation) minimizer of $ \mathcal{E}_K^{c,c}$ in $\mathcal{A}_{m,m}$ is given by the  pair $(f_1^0,f_2^0)=\big(\frac{1}{2}\chi_{B^{2m}}, \frac{1}{2}\chi_{B^{2m}}\big)$.

\end{lemma}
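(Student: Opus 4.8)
The plan is to diagonalize the quadratic energy by passing to the sum and difference of the two densities. Given $(f_1,f_2)\in\mathcal{A}_{m,m}$, set $s:=f_1+f_2$ and $d:=f_1-f_2$, so that $f_1=\tfrac{s+d}{2}$ and $f_2=\tfrac{s-d}{2}$. Using the bilinearity and symmetry of $J_K$ (the latter following from the radial symmetry of $K$), a direct expansion gives $J_K(f_1,f_1)+J_K(f_2,f_2)=\tfrac12\big(J_K(s,s)+J_K(d,d)\big)$ and $J_K(f_1,f_2)=\tfrac14\big(J_K(s,s)-J_K(d,d)\big)$, the cross terms cancelling. Substituting into \eqref{energy} with ${c_{11}}={c_{22}}=c$ yields the key identity
\begin{equation*}
\mathcal{E}_K^{c,c}(f_1,f_2)=\frac{c-1}{2}\,J_K(s,s)+\frac{c+1}{2}\,J_K(d,d),
\end{equation*}
which decouples the energy into a piece depending only on $s$ and a piece depending only on $d$.

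Next I would bound the two pieces separately. The admissibility constraints translate into $s\in L^1(\R^N;[0,1])$ with $\|s\|_{L^1}=2m$ and $d\in L^1(\R^N;\R)$ with $\int_{\R^N}d=0$. Since $-1<c<1$, the coefficient $\tfrac{c-1}{2}$ is negative while $\tfrac{c+1}{2}$ is positive. For the first piece, the Riesz inequality (Lemma \ref{riesz} with $f=g=s$) gives $J_K(s,s)\le J_K(\chi_{B^{2m}},\chi_{B^{2m}})$, and multiplying by the negative coefficient reverses the inequality. For the second piece, positive definiteness of $K$ gives $J_K(d,d)\ge0$, with equality if and only if $d=0$ a.e. Hence
\begin{equation*}
\mathcal{E}_K^{c,c}(f_1,f_2)\ge\frac{c-1}{2}\,J_K(\chi_{B^{2m}},\chi_{B^{2m}}),
\end{equation*}
and the right-hand side is exactly $\mathcal{E}_K^{c,c}\big(\tfrac12\chi_{B^{2m}},\tfrac12\chi_{B^{2m}}\big)$, since for this pair $s=\chi_{B^{2m}}$ and $d=0$. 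As $(f_1^0,f_2^0)$ is admissible ($f_1^0+f_2^0=\chi_{B^{2m}}\le1$ and $\int f_i^0=m$), this shows it is a minimizer.

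To establish uniqueness, I would suppose $(f_1,f_2)$ attains the minimum, so that both estimates above are equalities. Equality in $J_K(d,d)\ge0$ forces $d=0$ a.e., i.e. $f_1=f_2$. Equality in the Riesz chain $J_K(s,s)\le J_K(s^*,s^*)\le J_K(\chi_{B^{2m}},\chi_{B^{2m}})$ forces, by the equality cases in Lemma \ref{riesz}, both $s^*=\chi_{B^{2m}}$ and $s=s^*(\cdot-x_0)$ for some $x_0\in\R^N$, whence $s=\chi_{B^{2m}}(\cdot-x_0)$. Combining, $f_1=f_2=\tfrac12\chi_{B^{2m}}(\cdot-x_0)$, which is $(f_1^0,f_2^0)$ up to the translation $x_0$.

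The only genuinely delicate point is recognizing the sum/difference change of variables that simultaneously diagonalizes the three quadratic terms; once the identity is in hand the two resulting variational problems are classical and pull in compatible directions (the sum wants to concentrate on a ball, the difference wants to vanish), so there is no tension between minimizing the two pieces. The remaining work is purely bookkeeping on the equality cases of the Riesz inequality and of positive definiteness, both of which are supplied by the hypotheses (positive definiteness of $K$ and the strict bounds $-1<c<1$, which guarantee the two coefficients have strict and opposite signs).
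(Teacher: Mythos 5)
Your proof is correct and takes essentially the same route as the paper's: your identity $\mathcal{E}_K^{c,c}(f_1,f_2)=\tfrac{c-1}{2}J_K(s,s)+\tfrac{c+1}{2}J_K(d,d)$ is exactly the paper's convexity estimate $J_K(f_1,f_2)\le J_K\big(\tfrac{f_1+f_2}{2},\tfrac{f_1+f_2}{2}\big)$ in polarized form, since the gap in that inequality equals $\tfrac14 J_K(f_1-f_2,f_1-f_2)\ge 0$. Both arguments then conclude identically -- positive definiteness forces $f_1=f_2$, and the Riesz inequality identifies the ball -- with the minor cosmetic advantage on your side that applying Lemma \ref{riesz} directly to $s=f_1+f_2\in L^1(\R^N;[0,1])$ of mass $2m$ makes the equality-case bookkeeping (and the constraint $s\le 1$) match the lemma's hypotheses verbatim.
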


\begin{proof}
Let $(f_1,f_2)\in\mathcal{A}_{m,m}$.
We first notice that the convexity of the functional $J_K(f,f)$ immediately implies that 
\begin{equation}\label{primaequ}
\textstyle J_K(f_1,f_2)=2 J_K(\textstyle \frac{f_1+f_2}{2},\frac{f_1+f_2}{2})-
\frac{J_K(f_1,f_1)}{2}-\frac{J_K(f_2,f_2)}{2}\le J_K(\frac{f_1+f_2}{2},\frac{f_1+f_2}{2}).
\end{equation}
Moreover
$$
\textstyle \mathcal{E}_K^{c,c}(f_1,f_2)=c\, J_K(f_1+f_2,f_1+f_2)-2\,(1+c)\,J_K(f_1,f_2),
$$
which, together with \eqref{primaequ}, yields
\begin{multline*}
\textstyle
\mathcal{E}_K^{c,c}(f_1,f_2) \ge c\,J_K(\frac{f_1+f_2}{2}+\frac{f_1+f_2}{2},\frac{f_1+f_2}{2}+\frac{f_1+f_2}{2})  
\\      
\textstyle
-2(1+c)\,J_K(\frac{f_1+f_2}{2},\frac{f_1+f_2}{2})
= \mathcal{E}_K^{c,c}(\frac{f_1+f_2}{2},\frac{f_1+f_2}{2}),
\end{multline*}
where in the inequality we have also used  that $c+1>0$.
By  the strict convexity of $J_K(f,f)$, the inequality is strict whenever $f_1\neq f_2$. We deduce that   $f_1=f_2=\frac{f_1+f_2}{2}=:f$.
Since
$
\mathcal{E}_K^{c,c}(f,f)=2(c-1)\,J_K(f,f), 
$
 by Lemma \ref{riesz}, we conclude that $\mathcal{E}_K^{c,c}(f_1,f_2)$ attains its unique minimum when $f_1=f_2=\frac{1}{2}\chi_{B^{2m}}$.
\end{proof}

Let us introduce the coefficients $a_i$ (depending on ${c_{11}}$ and ${c_{22}}$) which represent the volume fractions of the two phases where they mix:
\begin{equation}\label{ai}
a_1:=:=\frac{{c_{22}}+1}{{c_{11}}+{c_{22}}+2}, \qquad a_2:= \frac{{c_{11}}+1}{{c_{11}}+{c_{22}}+2}. 
\end{equation}
Notice that, if $({c_{11}}+1)m_1=({c_{22}}+1)m_2$, then
$$
a_1= \frac{m_1}{m_1+m_2}, \qquad a_2=\frac{m_2}{m_1+m_2}.
$$
\begin{proposition}\label{propmass}
Let $-1<{c_{11}}, \, {c_{22}}\le 0$. If $({c_{11}}+1)m_1=({c_{22}}+1)m_2$, then the (unique up to a translation)
minimizer of $\Eab$ in $\Am$ is given by the pair
$$
(f_1,f_2)=(a_1 \chi_{B^{m_1+m_2}},a_2 \chi_{B^{m_1+m_2}}).
$$
\end{proposition}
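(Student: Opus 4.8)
The plan is to diagonalize the quadratic energy by the linear change of densities that separates the \emph{total density} from a \emph{transversal mode}. I would introduce
\[
h := f_1 + f_2, \qquad u := a_2 f_1 - a_1 f_2,
\]
and observe that, since $a_1 + a_2 = 1$, one has $f_1 = a_1 h + u$ and $f_2 = a_2 h - u$. Substituting these into $\Eab(f_1,f_2) = {c_{11}}J_K(f_1,f_1) + {c_{22}}J_K(f_2,f_2) - 2J_K(f_1,f_2)$ and expanding by bilinearity and symmetry of $J_K$, the coefficient of the cross term $J_K(h,u)$ turns out to be $2\big({c_{11}}a_1 - {c_{22}}a_2 - (a_2-a_1)\big)$, which vanishes \emph{exactly} because of the defining choice \eqref{ai} of $a_1,a_2$. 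Hence the energy decouples as
\[
\Eab(f_1,f_2) = \alpha\, J_K(h,h) + ({c_{11}}+{c_{22}}+2)\, J_K(u,u), \qquad \alpha := {c_{11}}a_1^2 + {c_{22}}a_2^2 - 2a_1 a_2 .
\]

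Next I would estimate the two terms separately. Since $-1 < {c_{11}},{c_{22}}\le 0$, the weights satisfy $a_1,a_2 > 0$, so $\alpha \le -2a_1a_2 < 0$, while ${c_{11}}+{c_{22}}+2 > 0$. For the second term, positive definiteness of $K$ gives $J_K(u,u)\ge 0$, with equality if and only if $u = 0$. For the first term, I would apply the Riesz inequality (Lemma \ref{riesz}) to $h\in L^1(\R^N;[0,1])$ with $\int_{\R^N} h(x)\ud x = m_1+m_2$, obtaining $J_K(h,h)\le J_K(\chi_{B^{m_1+m_2}},\chi_{B^{m_1+m_2}})$, with equality if and only if $h$ is (a translate of) $\chi_{B^{m_1+m_2}}$; since $\alpha<0$ this becomes a lower bound $\alpha J_K(h,h)\ge \alpha J_K(\chi_{B^{m_1+m_2}},\chi_{B^{m_1+m_2}})$. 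Adding the two estimates yields $\Eab(f_1,f_2)\ge \alpha\, J_K(\chi_{B^{m_1+m_2}},\chi_{B^{m_1+m_2}})$ for every admissible pair.

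It then remains to check that the candidate attains this bound and is the only one to do so. Here the mass constraint is used in an essential way: $({c_{11}}+1)m_1=({c_{22}}+1)m_2$ is precisely the identity that forces $a_1 = m_1/(m_1+m_2)$ and $a_2 = m_2/(m_1+m_2)$, so that $(a_1\chi_{B^{m_1+m_2}},a_2\chi_{B^{m_1+m_2}})\in\Am$; for this pair one has $h=\chi_{B^{m_1+m_2}}$ and $u=0$, hitting both optima simultaneously, so it is a minimizer. For uniqueness up to translation, any minimizer must saturate both inequalities: $J_K(u,u)=0$ forces $u=0$ by positive definiteness, whence $f_1 = a_1 h$ and $f_2 = a_2 h$, and maximality of $J_K(h,h)$ forces $h = \chi_{B^{m_1+m_2}(x_0)}$ by the rigidity in Riesz's inequality, giving back the stated pair centered at $x_0$.

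I expect the main difficulties to be twofold. First, the bookkeeping behind the cross-term cancellation, which is where the specific algebraic form of $a_1,a_2$ is pinned down; this is routine but must be done carefully. Second, and more delicate, the rigidity in the equality cases: one must invoke the sharp part of Lemma \ref{riesz} to conclude that $h$ is genuinely the indicator of a ball (not merely radially decreasing), and one must ensure that all the integrals are finite so that the decomposition and the equality analysis are legitimate — this last point is guaranteed because minimizers have compact support by Proposition \ref{compactmin}, so $J_K(h,h)$ and $J_K(u,u)$ are both finite.
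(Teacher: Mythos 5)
Your proof is correct, but it takes a genuinely different route from the paper's. You diagonalize the quadratic form in one shot, writing $f_1=a_1h+u$, $f_2=a_2h-u$ with $h=f_1+f_2$ and $u=a_2f_1-a_1f_2$, observing that the cross coefficient $2\bigl((c_{11}+1)a_1-(c_{22}+1)a_2\bigr)$ vanishes by the very definition \eqref{ai} of $a_1,a_2$, so that $\Eab(f_1,f_2)=\alpha\,J_K(h,h)+(c_{11}+c_{22}+2)\,J_K(u,u)$ with $\alpha<0$; you then conclude by the sharp Riesz inequality on the $h$-mode and positive definiteness on the $u$-mode. The paper instead uses a different linear substitution, $h_1=(1+\tfrac{c_{11}}{2})f_1-\tfrac{c_{22}}{2}f_2$, $h_2=-\tfrac{c_{11}}{2}f_1+(1+\tfrac{c_{22}}{2})f_2$, which maps $\Am$ into the symmetric class $\mathcal{A}_{m,m}$ with $m=\tfrac{m_1+m_2}{2}$ and turns $\Eab$ into a positive multiple of the symmetric energy $\mathcal{E}_K^{c,c}$ with $c=\tfrac{c_{11}c_{22}}{2-c_{11}c_{22}}$, and then invokes Lemma \ref{fact} (whose proof rests on convexity of $J_K$, i.e.\ positive definiteness applied to the difference of the densities, plus Riesz). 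The two arguments rely on exactly the same pillars --- positive definiteness to kill the transversal mode, Riesz rigidity to identify the total density as a ball --- but yours is more self-contained: it needs no preliminary lemma, no case split between $c_{11}=c_{22}$ and $c_{11}\neq c_{22}$, and it isolates transparently where the hypothesis $(c_{11}+1)m_1=(c_{22}+1)m_2$ enters (compatibility of the masses with $u=0$, i.e.\ admissibility of the candidate). What the paper's two-step reduction buys is Lemma \ref{fact} itself, which is reused elsewhere (e.g.\ in Remark \ref{nonex} and in the Conclusions). One caveat for your write-up: positive definiteness of $K$ is a standing assumption of this subsection of the paper, not visible in the bare statement, and it is what justifies both $J_K(u,u)\ge 0$ with its equality case and the finiteness of the terms in your decomposition; also, as you indicate, the equality analysis must run through the full chain $J_K(h,h)\le J_K(h^*,h^*)\le J_K(\chi_{B^{m_1+m_2}},\chi_{B^{m_1+m_2}})$ of Lemma \ref{riesz} to force $h=\chi_{B^{m_1+m_2}(x_0)}$ rather than merely a radial profile.
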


\vskip -0.5cm
\begin{figure}[h]
\centering
{\def\svgwidth{90pt}
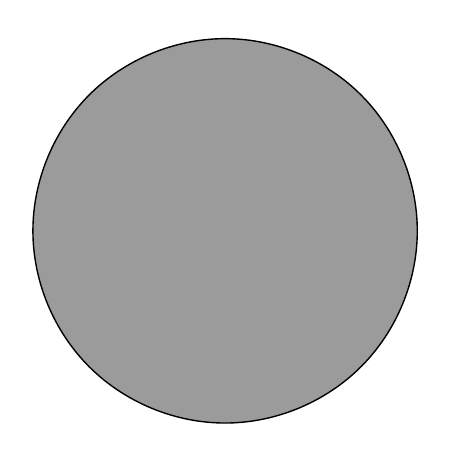}
\vskip -0.2cm
\caption{The phase $f_1$ is black and the phase $f_2$ is white.  Under the assumptions of Proposition \ref{propmass}, the minimzer is given by a ball where the two phases mix each other. The mixing is represented by the grey color.
}
\end{figure}

\begin{proof}
By Lemma \ref{fact} we get directly the claim in the case ${c_{11}}={c_{22}}$, since by assumption this implies $m_1=m_2$.

We now prove the result in the general case.
% ${c_{11}}\neq {c_{22}}$ (and hence $m_1\neq m_2$).  
For any $(f_1,f_2)\in\Am$, we set

\begin{equation}\label{formulah}
h_1:=(\textstyle 1+\frac{{c_{11}}}{2})f_1-\frac{{c_{22}}}{2}f_2,\quad h_2:=-\frac{{c_{11}}}{2}f_1+(1+\frac{{c_{22}}}{2})f_2.
\end{equation}
It is easy to see that $h_1,h_2\ge 0$, $h_1+h_2=f_1+f_2\le 1$ and,
by assumption,
%\begin{multline*}
%\int_{\R^N}h_1(x)\ud x=\big((\textstyle 1+\frac{{c_{11}}}{2})\frac{1+{c_{22}}}{1+{c_{11}}}-\frac{{c_{22}}}{2}\big)m_2=
%\big(\textstyle -\frac{{c_{11}}}{2}\frac{1+{c_{22}}}{1+{c_{11}}}+(1+\frac{{c_{22}}}{2})\big)m_2=\int_{\R^N}h_2(x)\ud x.
%\end{multline*}
$$
\int_{\R^N}h_1(x)\ud x =\int_{\R^N}h_2(x)\ud x =\frac{m_1 + m_2}{2}=:m.
$$
%Set $m:=\int_{\R^N}h_1(x)\ud x=\int_{\R^N}h_2(x)\ud x$. 
By straightforward computations it follows that, setting $c:=\frac{{c_{11}}{c_{22}}}{2-{c_{11}}{c_{22}}}$,
\begin{equation*}
\Eab(f_1,f_2)
=\frac{2-{c_{11}}{c_{22}}}{2+{c_{11}}+{c_{22}}}\,\mathcal{E}_K^{c,c}(h_1,h_2).
\end{equation*}
%=\frac{2-{c_{11}}{c_{22}}}{2+{c_{11}}+{c_{22}}}\Big[\frac{{c_{11}}{c_{22}}}{2-{c_{11}}%{c_{22}}}\,J_K(h_1,h_1)+c\,J_K(h_2,h_2)-2\,J_K(h_1,h_2)\Big]\\
Notice that, since $-1<{c_{11}},{c_{22}}<0$, we have that $0<c<1$ and $\frac{2-{c_{11}}{c_{22}}}{2+{c_{11}}+{c_{22}}}>0$; therefore, $(f_1,f_2)$ minimizes $\Eab$ (in $\Am$) if and only if $(h_1,h_2)$ minimizes $\mathcal{E}_K^{c,c}$ in $\mathcal{A}_{m,m}$.
By Lemma \ref{fact}, the unique minimizer of $\mathcal{E}_K^{c,c}$ in $\mathcal{A}_{m,m}$ is given by
$(h_1,h_2)=(\frac{1}{2}\chi_{B^{2m}},\frac{1}{2}\chi_{B^{2m}})$. Hence the claim for ${c_{11}}\neq{c_{22}}$ follows by \eqref{formulah}.
%Using that $2m=m_1+m_2$
%$$
%f^0_1=\frac{2 h^0_1+{{c_{22}}}(h^0_1+h^0_2)}{2+{c_{11}}+{c_{22}}}\text{ and } f^0_2=\frac{2 h^0_2+{{c_{11}}}(h^0_1+h^0_2)}{2+{c_{11}}+{c_{22}}},
%$$
%we get the claim in the general case.
%\begin{multline*}
%E^{\tilde a}_m(h_1,h_2)=
%\Bigg[\tilde a\int_{\R^N\times\R^N}h_1(x)h_1(y)K(x-y)\ud x\ud y\\
%+\tilde a\int_{\R^N\times\R^N}h_2(x)h_2(y) K(x-y)\ud x\ud y+2\int_{\R^N\times\R^N}h_1(x)h_2(y)K(x-y)\ud x\ud y\Bigg]
%\end{multline*}
\end{proof}

\begin{remark}
{\rm
Proposition \ref{propmass} establishes that, for very special coefficients ${c_{11}}$ and ${c_{22}}$ depending on the masses $m_1$, $m_2$, the minimizer is given by a homogenous density that mixes the two phases with specific volume fractions. The proof is based on the convexity of $J_K$. One may wonder whether, under this assumption, the result still holds for generic ${c_{11}}$ and ${c_{22}}$. We will see that this is not the case even for the Coulomb kernel (see Corollary \ref{quadr1d} and Theorem \ref{00}). 
}
%Moreover, also the expression of the solution for the coefficients satisfying ${c_{11}}+{c_{22}} +2 >0$ and either $1<{c_{22}} \le 0$ or
%$1<{c_{11}} \le 0$ is (for general kernels) open.}
\end{remark}

%\begin{proposition}

%\end{proposition}

\section{The Coulomb  kernel}\label{seccoul}
%In this section we focus on the case of Coulomb kernels, introduced in \eqref{colker}. 
Through this section we will assume that $K=K_{C_N}$ is the Coulomb kernel defined in \eqref{colker}.
We will provide the explicit form of the solutions for all the choices of the (nonpositive) parameters ${c_{11}},\, {c_{22}}$, 
except when they are both strictly less than $-1$ in which case we will only be able to say that $f_i$ are characteristic functions of sets.

%We recall that, for $N\ge 3$, $V_i(x)=f_i\ast K_{C_N}\to 0$ as $|x|\to +\infty$. 
  
\subsection{Consequences of the first variation}
We specialize the results of section \ref{subfv} to the case of Coulomb kernels. We recall that the sets $G_i$, $F_i$ and $S$ are defined in \eqref{defac}, \eqref{defs}.
\begin{proposition}\label{armonicofond}
Let $(f_1,f_2)$ be a minimizer of $\mathcal E ^{{c_{11}},{c_{22}}}_{K_{C_N}}$ in $\Am$.
The following facts hold true.
\begin{itemize}
\item[(i)] $({c_{11}}+ 1)f_1-({c_{22}} +1)f_2=0$ a.e. in $G_1\cap G_2$.
In particular,
if either $({c_{11}} + 1)({c_{22}} + 1)<0$, or  ${c_{11}}= -1 \ne {c_{22}}$, or  ${c_{22}}= -1 \ne {c_{11}}$, 
then  $|G_1\cap G_2|=0$.
\item[(ii)] if ${c_{11}}\neq 0$, then  $|G_1\setminus G_2|=0$, while if 
${c_{22}}\neq 0$, then $
|G_2\setminus G_1|=0$.
\item[(iii)] $G_1\cap G_2\subset S$.
\item[(iv)] If ${c_{11}}\neq -1$ or ${c_{22}}\neq -1$, then
\begin{equation}\label{zonamista}
f_1=a_1,\quad f_2=a_2 \quad\text{ a.e. in }G_1\cap G_2,
\end{equation}
where $a_i$ are defined in \eqref{ai}.
\end{itemize}
\end{proposition}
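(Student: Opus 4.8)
The plan is to exploit the defining feature of the Coulomb kernel, namely $-\Delta V_i=f_i$, together with the two ``constancy'' conclusions of the first variation already at our disposal: equation \eqref{sommaconst}, which gives $c_{ii}V_i-V_j=\gamma_i$ a.e. in $G_i\setminus S$, and equation \eqref{diffconst}, which gives $(c_{11}+1)V_1-(c_{22}+1)V_2=\gamma$ a.e. in $G_1\cap G_2$. The bridge from such constancy statements to pointwise relations between $f_1$ and $f_2$ is the following elementary fact: since $0\le f_i\le 1$ and each $f_i$ has compact support (Proposition \ref{compactmin}), Calder\'on--Zygmund theory gives $V_i\in W^{2,p}_{\loc}(\R^N)$ for every $p<\infty$; and if $w\in W^{2,p}_{\loc}$ is a.e. constant on a measurable set $E$, then $\Delta w=0$ a.e. on $E$. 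The latter follows by applying twice the classical fact that a Sobolev function has vanishing gradient a.e. on each of its level sets: first $\nabla w=0$ a.e. on $E$, so $E$ is contained (up to a null set) in $\{\partial_k w=0\}$ for every $k$, and then $\nabla(\partial_k w)=0$ a.e. there, whence $D^2w=0$ and in particular $\Delta w=0$ a.e. on $E$.

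Granting this, part (i) is immediate: applying $-\Delta$ to \eqref{diffconst} on $G_1\cap G_2$ yields $(c_{11}+1)f_1-(c_{22}+1)f_2=0$ a.e. there. Since $0<f_1,f_2<1$ on $G_1\cap G_2$, a sign inspection finishes the ``in particular'' clause: if $(c_{11}+1)(c_{22}+1)<0$ the two sides of $(c_{11}+1)f_1=(c_{22}+1)f_2$ have opposite signs unless the set is null, while if $c_{11}=-1\ne c_{22}$ the identity reads $(c_{22}+1)f_2=0$ with $f_2>0$, again forcing $|G_1\cap G_2|=0$ (and symmetrically for $c_{22}=-1\ne c_{11}$). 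For part (iii), observe that on $(G_1\cap G_2)\setminus S$ one may use \eqref{sommaconst} with $i=1$; applying $-\Delta$ gives $c_{11}f_1-f_2=0$ a.e. there, which is impossible since $c_{11}f_1\le 0<f_2$. Hence $|(G_1\cap G_2)\setminus S|=0$, i.e. $G_1\cap G_2\subset S$.

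Part (ii) follows along the same lines but on $G_1\setminus G_2$. On this set $0<f_1<1$ while $f_2\notin(0,1)$; the constraint $f_1+f_2\le 1$ rules out $f_2=1$, so $f_2=0$ a.e. and consequently $f_1+f_2<1$, i.e. $G_1\setminus G_2\subset G_1\setminus S$. Thus \eqref{sommaconst} with $i=1$ applies, and $-\Delta$ gives $c_{11}f_1-f_2=c_{11}f_1=0$ a.e. on $G_1\setminus G_2$. If $c_{11}\ne 0$ this forces $f_1=0$, contradicting $f_1>0$ on $G_1$, so $|G_1\setminus G_2|=0$; the statement for $c_{22}\ne 0$ is symmetric.

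Finally, part (iv) combines (i) and (iii): on $G_1\cap G_2$ we simultaneously have $f_1+f_2=1$ (by (iii)) and $(c_{11}+1)f_1=(c_{22}+1)f_2$ (by (i)). When $c_{11}+c_{22}+2\ne 0$, solving this $2\times 2$ linear system yields exactly $f_1=a_1$, $f_2=a_2$ with $a_i$ as in \eqref{ai}. The only remaining possibility compatible with the hypothesis ``$c_{11}\ne-1$ or $c_{22}\ne-1$'' is $c_{11}+c_{22}+2=0$ with $(c_{11},c_{22})\ne(-1,-1)$; but then $c_{11}+1$ and $c_{22}+1$ are nonzero and of opposite sign, so $(c_{11}+1)(c_{22}+1)<0$ and part (i) already gives $|G_1\cap G_2|=0$, making \eqref{zonamista} vacuous. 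The step I expect to require the most care is the level-set Laplacian fact---specifically, passing rigorously from ``a.e. constant on a merely measurable set'' to ``$\Delta w=0$ a.e. there,'' together with the verification that $V_i\in W^{2,p}_{\loc}$ so that this Sobolev-space reasoning is legitimate.
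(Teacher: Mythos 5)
Your proposal is correct and follows essentially the same route as the paper's own (very terse) proof: both rest on applying $-\Delta$ to the constancy relations \eqref{sommaconst} and \eqref{diffconst}, using $-\Delta V_i=f_i$, and then concluding by sign inspection and, for (iv), by solving the $2\times 2$ system coming from (i) and (iii). The only difference is that you spell out the technical justification the paper leaves implicit --- the $W^{2,p}_{\loc}$ regularity of the potentials and the fact that a Sobolev function constant a.e.\ on a measurable set has vanishing Laplacian a.e.\ there --- and you explicitly handle the degenerate case $c_{11}+c_{22}+2=0$ in (iv), both of which are welcome additions rather than deviations.
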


\begin{proof} 

\noindent {(i)} is a consequence of \eqref{diffconst}. To prove { (ii)} notice that $G_1\setminus G_2\subset G_1\setminus S$, which implies by 
\eqref{sommaconst} that ${c_{11}} f_1 = f_2$ in $G_1\setminus G_2$. Furthermore, in this region $f_2=0$, so that (since ${c_{11}} \neq 0$) also $f_1=0$. The case ${c_{22}}\neq 0$ is proved in the same way.

The proof of { (iii)} follows recalling that by \eqref{sommaconst} we have $0>{c_{11}} f_1-f_2=0$ in $(G_1\cap G_2)\setminus S$ and hence $|(G_1\cap G_2)\setminus S|=0$. The claim in { (iv)} follows by \eqref{diffconst} recalling that, in view of { (iii)}, $f_1+f_2=1$.
\end{proof}

\subsection{The strongly attractive case {${c_{11}} + {c_{22}} \le -2$}}
In  Theorem \ref{proptutto} we have characterized the minimizers for every ${c_{11}}, \, {c_{22}} $ such that ${c_{11}} +{c_{22}} \le -2$ and 
$\max \{{c_{11}}, {c_{22}}\} \ge -1$. Clearly such result applies also to Coulomb kernels. The (general $N$ dimensional) case ${c_{11}},{c_{22}}<-1$ was left open. In
the following proposition, we show that for Coulomb kernels the minimizers $f_i$ are characteristic functions of sets $E_i$ whose shape is unknown (see Remark \ref{cong} for some further comments in this direction).

\begin{proposition}\label{coroll1.5}
Let ${c_{11}}+{c_{22}}\le-2$ with $({c_{11}},{c_{22}})\neq (-1,-1)$. If $(f_1,f_2)$ is a minimizer of $\mathcal E^{{c_{11}},{c_{22}}}_{K_{C_N}}$ in $\Am$, then $f_1=\chi_{F_1}$ and $f_2=\chi_{F_2}$ for some $F_1$, $F_2\subset \R^N$.
\end{proposition}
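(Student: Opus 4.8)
The plan is to show that in the strongly attractive regime $c_{11}+c_{22}\le -2$ (with $(c_{11},c_{22})\ne(-1,-1)$), no genuine mixing can occur, so that both phases are forced to take only the values $0$ and $1$. The key structural input is Proposition \ref{armonicofond}, which analyzes the mixing region $G_1\cap G_2$ and the one-phase-only regions $G_1\setminus G_2$, $G_2\setminus G_1$. Recall that $F_i=\{f_i=1\}$ and $G_i=\{0<f_i<1\}$, so $f_i=\chi_{F_i}$ is equivalent to $|G_i|=0$. Thus the whole statement reduces to proving $|G_1|=|G_2|=0$.

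First I would dispose of the mixing region. By Proposition \ref{armonicofond}(iv), whenever $(c_{11},c_{22})\ne(-1,-1)$ we have $f_1=a_1$, $f_2=a_2$ a.e.\ in $G_1\cap G_2$, with $a_i$ from \eqref{ai}. The crucial observation is that in the strongly attractive case $c_{11}+c_{22}\le -2$ the denominator $c_{11}+c_{22}+2\le 0$, while the numerators $c_{22}+1$ and $c_{11}+1$ need not both be nonnegative; I would check that the constraint $0\le a_i\le 1$ together with $f_1+f_2=1$ on $G_1\cap G_2$ (from part (iii), $G_1\cap G_2\subset S$) is incompatible with $c_{11}+c_{22}+2\le 0$ unless $|G_1\cap G_2|=0$. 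Indeed $a_1+a_2=1$ always, but one verifies that $a_i\in(0,1)$ requires the two numerators to have the same sign as the denominator, i.e.\ forces $c_{ii}<-1$ for both $i$ together with strict inequality $c_{11}+c_{22}<-2$, and even then the degenerate value must be reexamined; the borderline cases $c_{ii}=-1$ are excluded by Proposition \ref{armonicofond}(i), which gives $|G_1\cap G_2|=0$ directly when exactly one coefficient equals $-1$. I expect this case bookkeeping to be the main obstacle: one must handle separately the subcases where $c_{ii}=-1$ for some $i$ (using part (i)) and where both $c_{ii}<-1$.

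Next I would handle the single-phase regions $G_i\setminus G_j$. By Proposition \ref{armonicofond}(ii), if $c_{11}\ne 0$ then $|G_1\setminus G_2|=0$, and if $c_{22}\ne 0$ then $|G_2\setminus G_1|=0$. Under $c_{11}+c_{22}\le -2$ both coefficients are strictly negative (neither can be zero, since one of them being zero would force the other to be $\le -2$, but then the first-variation argument still gives the relevant vanishing), so both regions are null. Combining $|G_1\setminus G_2|=|G_2\setminus G_1|=0$ with $|G_1\cap G_2|=0$ from the previous paragraph yields $|G_1|=|G_2|=0$. Hence $f_1$ and $f_2$ are characteristic functions of the sets $F_1$ and $F_2$, which is the claim; the exceptional point $(c_{11},c_{22})=(-1,-1)$ is precisely the degenerate case excluded in the hypothesis, where Proposition \ref{armonicofond}(iv) does not apply and genuine mixing persists.

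I would write the argument by first invoking part (ii) to kill the pure regions, then treating the mixing region via parts (i), (iii) and (iv), splitting into the subcase $\min\{c_{11},c_{22}\}=-1$ and the subcase $c_{11},c_{22}<-1$. The only real delicacy is confirming that the volume-fraction values $a_1,a_2$ cannot both lie in $(0,1)$ when $c_{11}+c_{22}+2\le 0$; once that sign analysis of \eqref{ai} is clean, the conclusion $|G_1\cap G_2|=0$ is immediate and the proof closes.
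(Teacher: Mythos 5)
Your reduction of the statement to $|G_1|=|G_2|=0$, and your treatment of the regions $G_1\setminus G_2$, $G_2\setminus G_1$ via Proposition \ref{armonicofond}(ii), match the paper's final step; but the core of your argument --- that a sign analysis of the fractions $a_i$ in \eqref{ai} rules out the mixing region --- breaks down in exactly the one hard case, namely $c_{11},c_{22}<-1$. There both numerators $c_{ii}+1$ and the denominator $c_{11}+c_{22}+2$ are strictly negative, so $a_1,a_2>0$, and since $a_1+a_2=1$ identically, both fractions lie in $(0,1)$: for instance $c_{11}=c_{22}=-2$ gives $a_1=a_2=\tfrac12$. Thus the stationarity conditions are perfectly consistent with a mixing region on which $f_1\equiv a_1$, $f_2\equiv a_2$, and no first-variation argument can exclude it: the instability of mixing in the strongly attractive regime is a genuinely second-order phenomenon. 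This is precisely why the paper invokes Lemma \ref{secondvar}: for any zero-mean $\varphi$ supported in $G_1\cap G_2$, minimality forces $(c_{11}+c_{22}+2)\,J_{K_{C_N}}(\varphi,\varphi)\ge0$, hence $J_{K_{C_N}}(\varphi,\varphi)\le0$ because $c_{11}+c_{22}+2<0$; the positive-definiteness properties of the Coulomb kernel (Remark \ref{posdef}) then force $\varphi=0$, and the arbitrariness of $\varphi$ gives $|G_1\cap G_2|=0$. (The paper disposes of the cases $\max\{c_{11},c_{22}\}\ge-1$, and of $N=1$, by quoting the explicit characterizations of Theorem \ref{proptutto} and Proposition \ref{oned}, and runs the second-variation argument only for $N\ge2$ and $c_{11},c_{22}<-1$.)

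A second, smaller gap: your parenthetical claim that when one coefficient vanishes (say $c_{11}=0$, which forces $c_{22}\le-2$) ``the first-variation argument still gives the relevant vanishing'' of $|G_1\setminus G_2|$ is unjustified. With $c_{11}=0$, equation \eqref{sommaconst} only says that $V_2$ is constant a.e.\ on $G_1\setminus S\supset G_1\setminus G_2$, which is compatible with a genuine region of intermediate density (the Coulomb potential of a spherical shell is constant inside the shell), so Proposition \ref{armonicofond}(ii) really does need $c_{11}\neq0$; this case is instead covered by the ``if and only if'' characterization of Theorem \ref{proptutto}(iii). Your remaining subcases --- exactly one $c_{ii}=-1$, or $(c_{11}+1)(c_{22}+1)<0$ --- are correctly handled by Proposition \ref{armonicofond}(i)--(ii) and would give a mildly more self-contained alternative to quoting Theorem \ref{proptutto}; but as written the proposal does not prove the proposition in the regime $c_{11},c_{22}<-1$, which is the only regime where the statement is not already contained in the results you cite.
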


\begin{proof}
By Theorem \ref{proptutto} and Proposition \ref{oned} the claim holds true in the one dimensional case and in the general $N$ dimensional case for $\max\{{c_{11}},{c_{22}}\}\ge -1$, so that  it is enough to prove the claim in the case $N\ge 2$ and ${c_{11}},{c_{22}}<-1$. Since  ${c_{11}}+{c_{22}}+2<0$, by applying Lemma \ref{secondvar} with $\ffi  \in L^1(\R^N;\R)$,   $\ffi=0$ a.e. in $\R^N\setminus (G_1\cap G_2)$ and $\int_{\R^N}\ffi  \ud x =0$, we get
\begin{equation}\label{minoridimenouno}
\int_{G_1\cap G_2}\int_{G_1\cap G_2}  K_{C_N}(x-y)  \, \ffi(x) \, 
\ffi(y) \ud x\ud y\le 0.
\end{equation}
By Remark \ref{posdef} 
%and by Proposition \ref{compactmin}, 
we deduce that the above inequality is actually an equality and that  $\ffi=0$  in $G_1\cap G_2$. By the arbitrariness of $\ffi$, it follows that $|G_1\cap G_2|=0$.
Finally, by Proposition \ref{armonicofond}(ii), we have that $|G_1\setminus G_2|=|G_2\setminus G_1|=0$, so we conclude that
$|G_1|=|G_2|=0$.

\end{proof}

\subsection{The weakly attractive case {${c_{11}} +  {c_{22}} > -2$}  (preliminary results)}

For any measurable set $E\subset\R^N$, we set $V_E:=\chi_{E}* K$.
\begin{lemma}\label{solounaomogenea}
Let $-1\le{c_{11}},{c_{22}}\le 0$ with ${c_{11}}\neq -1$ or ${c_{22}}\neq -1$. Then,
there exists a minimizer $(f_1,f_2)$ of $\mathcal E^{{c_{11}},{c_{22}}}_{K_{C_N}}$ in $\Am$, such that $|G_1\setminus G_2|=|G_2\setminus G_1|=0$
and either $|F_1|=0$ or $|F_2|=0$.

Moreover, any minimizer $(f_1,f_2)$ of $\mathcal E^{{c_{11}},{c_{22}}}_{K_{C_N}}$ in $\Am$ is such that either $|G_1\setminus G_2| + |F_1|=0$ or
$|G_2\setminus G_1| + |F_2|=0$. 
\end{lemma}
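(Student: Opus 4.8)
The plan is to prove the universal ``moreover'' dichotomy first, by potential theory, and then to bootstrap it to the existence statement with the concentration results already at hand. The natural device is the potential
$W:=(c_{11}+1)V_1-(c_{22}+1)V_2$, which by $-\Delta K_{C_N}=\delta_0$ satisfies $-\Delta W=(c_{11}+1)f_1-(c_{22}+1)f_2$ in $\R^N$. I carry out the argument in the genuinely weakly attractive regime $-1<c_{11},c_{22}\le 0$, where $(c_{ii}+1)>0$; at the endpoints $c_{11}=-1$ or $c_{22}=-1$ Proposition \ref{armonicofond}(i) forces $|G_1\cap G_2|=0$, so the phases cannot mix and the configuration is rigid, to be treated apart.

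For the dichotomy fix any minimizer $(f_1,f_2)$ and write $\supp f_i=\overline{G_i\cup F_i}$. The first variation in the form \eqref{ineq} says precisely that $\alpha:=\sup_{\supp f_1}W\le\inf_{\supp f_2}W=:\beta$, i.e. $W\le\alpha$ on $\supp f_1$ and $W\ge\beta$ on $\supp f_2$. Introduce the pure phases $P_1:=\{f_1>0,\ f_2=0\}$ and $P_2:=\{f_1=0,\ f_2>0\}$; up to null sets $P_1\supseteq(G_1\setminus G_2)\cup F_1$ and $P_2\supseteq(G_2\setminus G_1)\cup F_2$, since $f_2=0$ on $G_1\setminus G_2$. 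On $\{W<\alpha\}\subseteq\{W<\beta\}$ one has $f_2=0$, hence $-\Delta W=(c_{11}+1)f_1\ge0$ and $W$ is superharmonic there; symmetrically $W$ is subharmonic on $\{W>\beta\}$, where $f_1=0$. Since $f_i\in L^\infty$ with compact support (Proposition \ref{compactmin}), $W\in W^{2,p}_{\loc}$, so $\Delta W=0$ a.e. on every level set of $W$; as $-\Delta W=(c_{11}+1)f_1>0$ on $P_1$ and $<0$ on $P_2$, this gives $P_1\subseteq\{W<\alpha\}$ and $P_2\subseteq\{W>\beta\}$ up to null sets.

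The conclusion comes from the maximum principle together with the behaviour of $W$ at infinity. For $N\ge3$ one has $W\to0$ at infinity. If $\alpha<0$ then $\{W<\alpha\}$ is bounded, $W$ is superharmonic there and equals $\alpha$ on its boundary, so the minimum principle forces $\{W<\alpha\}=\emptyset$, whence $|P_1|=0$; symmetrically $\beta>0$ yields $|P_2|=0$. As $\alpha\le\beta$, the only remaining case is $\alpha=\beta=0$, and then the maximum principle on an exhaustion by balls (using $W\to0$) forces $W\equiv0$, so $(c_{11}+1)f_1=(c_{22}+1)f_2$ a.e.; since $(c_{ii}+1)>0$ this gives $P_1=P_2=\emptyset$. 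In every case at least one $P_i$ is null, i.e. $|G_i\setminus G_j|+|F_i|=0$ for some $i$, which is the ``moreover''. For $N=1,2$ the Coulomb potential does not decay: $W$ grows like $-c_N\big[(c_{11}+1)m_1-(c_{22}+1)m_2\big]\rho(|x|)$ with $c_N>0$ and $\rho(|x|)\to+\infty$, so the sign of $\int(-\Delta W)=(c_{11}+1)m_1-(c_{22}+1)m_2$ decides which of $\{W<\alpha\}$, $\{W>\beta\}$ is bounded; the balanced case $(c_{11}+1)m_1=(c_{22}+1)m_2$ is again disposed of through the conditional positive definiteness of $K_{C_1},K_{C_2}$ on zero-average densities (Remark \ref{posdef}), which forces $W\equiv0$.

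For the existence statement, apply the dichotomy to fix, say, $|P_1|=0$, so that $|G_1\setminus G_2|=0$, $|F_1|=0$ and $\supp f_1\subseteq\overline{G_1\cap G_2}$. It remains to arrange $|G_2\setminus G_1|=0$ as well. If $c_{22}\ne0$ this is automatic from Proposition \ref{armonicofond}(ii); if $c_{22}=0$, use the $c_{22}=0$ version of Corollary \ref{concentra} to replace $f_2$ on $G_2\setminus G_1\subseteq\{f_1=0\}$ by the characteristic function of a set of equal mass. This does not increase the energy, hence keeps the pair a minimizer, leaves $f_1$ (and thus $|F_1|=0$ and $|G_1\setminus G_2|=0$) untouched, and removes all fractional values of $f_2$ off $G_1\cap G_2$, giving $|G_2\setminus G_1|=0$; the symmetric bookkeeping covers the case $|P_2|=0$. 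I expect the main obstacle to be exactly this maximum-principle step: controlling $W$ at infinity (trivial only for $N\ge3$, genuinely dimension dependent for $N=1,2$), ruling out the borderline $\alpha=\beta=0$ cleanly, and justifying the level-set identity $\Delta W=0$ a.e.\ on $\{W=\textrm{const}\}$; this last point, combined with the strict sign $(c_{ii}+1)>0$, is precisely what excludes the degenerate endpoints $c_{ii}=-1$ and selects which pure phase survives.
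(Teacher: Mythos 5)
Your argument, in the regime $-1<c_{11},c_{22}\le 0$, is correct, and it reorganizes the paper's proof rather than reproducing it. The paper first normalizes an arbitrary minimizer (Proposition \ref{armonicofond}(ii) plus Corollary \ref{concentra}) so that \eqref{mult} holds, and only then does potential theory: by \eqref{zonamista} and the identity $(c_{11}+1)a_1=(c_{22}+1)a_2$ the mixing zone drops out of \eqref{ineq}, so the relevant potential is $U=(c_{11}+1)V_{F_1}-(c_{22}+1)V_{F_2}$, generated by the pure phases alone; sub/superharmonicity of $U$ off $\overline F_1$, $\overline F_2$, the behaviour at infinity, and \eqref{ineq1} show that $U$ is constant on one of the sets $F_i$, which contradicts $-\Delta U=(c_{11}+1)>0$ a.e.\ on $F_1$ (resp.\ $-\Delta U=-(c_{22}+1)<0$ a.e.\ on $F_2$) unless $|F_i|=0$; the ``moreover'' is then obtained by contradiction, renormalizing a hypothetical bad minimizer. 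You instead work with the full potential $W=(c_{11}+1)V_1-(c_{22}+1)V_2$ of an \emph{arbitrary} minimizer, which is legitimate because \eqref{ineq} localizes $\supp f_1$ and $\supp f_2$ in $\{W\le\alpha\}$ and $\{W\ge\beta\}$; the maximum principle on $\{W<\alpha\}$, $\{W>\beta\}$ together with the a.e.\ identity $\Delta W=0$ on level sets of a $W^{2,p}_{\loc}$ function then yields the dichotomy for every minimizer directly, and existence follows by one concentration step. Note that on a normalized minimizer your $W$ coincides with the paper's $U$, so the two proofs are close cousins; what yours buys is that the ``moreover'' part needs no contradiction/renormalization argument, at the price of the level-set lemma and the dimension-dependent discussion at infinity. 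Your balanced case for $N=1,2$ does close, but you should write it out: with $g:=(c_{11}+1)f_1-(c_{22}+1)f_2$, which is compactly supported (Proposition \ref{compactmin}) and has zero integral, one has $J_{K_{C_N}}(g,g)=\int_{\R^N}g(x)\,W(x)\ud x\le (c_{11}+1)\,m_1(\alpha-\beta)\le 0$ by the first variation, so Remark \ref{posdef} forces $g\equiv 0$, hence $|P_1|=|P_2|=0$.

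The one genuine defect is the endpoints: you restrict to $-1<c_{11},c_{22}\le0$ and promise to treat $c_{11}=-1$ or $c_{22}=-1$ ``apart'', but never do --- and in fact you cannot, because the statement is false there. For $(c_{11},c_{22})=(-1,0)$, Corollary \ref{triang} identifies the unique minimizer as $(\chi_{B^{m_1}},\chi_{B^{m_1+m_2}\setminus B^{m_1}})$, for which $|F_1|=m_1>0$, $|F_2|=m_2>0$ and $|G_1|=|G_2|=0$, so both the existence claim and the dichotomy fail. The hypothesis ``$c_{11}\neq-1$ or $c_{22}\neq-1$'' must be read as ``and'' (equivalently $-1<c_{11},c_{22}\le0$); the paper's own proof tacitly requires this too, since its final step derives the contradiction from ``$U$ constant on $F_2$'' via $-\Delta U=-(c_{22}+1)\neq0$ a.e.\ on $F_2$, which is vacuous when $c_{22}=-1$ (and indeed, by Newton's theorem, $U$ \emph{is} constant on the inner ball of the ball-plus-annulus minimizer). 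So your restriction is the correct reading of the lemma --- which in the paper is only ever invoked with $c_{11}=c_{22}=0$, where your proof is complete --- but you should state explicitly that the endpoint cases are excluded because the conclusion fails there, rather than deferring them.
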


\begin{proof}
%We first focus on the case $N\ge 3$.
Let $(f_1,f_2)$ be a minimizer of $\mathcal E^{{c_{11}},{c_{22}}}_{K_{C_N}}$ in $\Am$.
By Proposition \ref{armonicofond}(ii) and Corollary \ref{concentra} we can always assume
\begin{equation}\label{mult}
\{f_1\neq 0\}=(G_1\cap G_2)\cup F_1,\qquad \{f_2\neq 0\}=(G_1\cap G_2)\cup F_2\quad\text{a.e.},
\end{equation}
so that $|G_1\setminus G_2|=|G_2\setminus G_1|=0$. 

Now, let us prove that either $|F_1|=0$ or $|F_2|=0$. We first focus on the case $N\ge 3$. By \eqref{mult} and \eqref{zonamista} we have
\begin{equation}\label{zonamista2}
f_1=\textstyle a_1 \chi_{G_1\cap G_2}+\chi_{F_1},\quad f_2=a_2 \chi_{G_1\cap G_2}+\chi_{F_2}.
\end{equation}
It follows that 
$$
V_1=\textstyle a_1 V_{G_1\cap G_2}+V_{F_1},\quad V_2=a_2 V_{G_1\cap G_2}+V_{F_2}
$$
which together with \eqref{ineq} easily yields 
\begin{equation}\label{ineq1}
({c_{11}}+1)V_{F_1}(x_2)-({c_{22}}+1)V_{F_2}(x_2)\ge({c_{11}}+1)V_{F_1}(x_1)-({c_{22}}+1)V_{F_2}(x_1)
\end{equation}
for any $x_1\in \bar{F_1}$ and any $x_2\in \bar{F_2}$. 
Set $U(x):=({c_{11}}+1)V_{F_1}(x)-({c_{22}}+1)V_{F_2}(x)$. Then $U$ solves
\begin{equation}\label{equation}
\left\{\begin{array}{ll}
-\Delta U=({c_{11}}+1)\chi_{F_1}-({c_{22}}+1)\chi_{F_2}&\text{ in }\R^N\\
U(x)\to 0&\text{ if }|x|\to\infty.
\end{array}\right.
\end{equation}
So, $U$ is subharmonic  in $\R^N\setminus \overline{F}_1$ and hence either $U\le 0$ or $U$ reaches its maximum on $\overline{F}_1$. Analogously, since $U$ is superharmonic in  $\R^N\setminus \overline{F}_2$,  either $U\ge 0$ or $U$ reaches its minimum on $\overline{F}_2$. Now, if $U\equiv 0$, then $|F_1|=|F_2|=0$; otherwise, assume, for instance, that $U$ reaches its maximum on $\overline{F}_1$.
By \eqref{ineq1} and by \eqref{equation}, it follows that $U$ is constant in $F_2$,  and hence $|F_2|=0$.
Analogously, if $U$ reaches its minimum on $\overline F_2$, we get that $|F_1|=0$.

The proofs for the cases $N=1,2$ are analogous, the only care being that, for $N=2$, the boundary condition in  \eqref{equation} should be replaced either by $U(x)\to 0$ or $U(x)\to\pm \infty$, according with the sign of $({c_{11}}+1)|F_1|-({c_{22}}+1)|F_2|$. For $N=1$ a direct proof shows that  $U$ reaches its maximum on $\overline{F}_1$ and its minimum on $\overline{F_2}$.

We pass to the proof of the last claim of the lemma. Assume by contradiction that $G_1\setminus G_2 + |F_1|>0$ and
$|G_2\setminus G_1| + |F_2|>0$. By Proposition \ref{armonicofond}(ii) and Corollary \ref{concentra} we deduce that there exists a minimizer satisfying \eqref{mult} with 
both $F_1$ and $F_2$  with positive measure.  Following the lines of the proof of the first claim of the lemma, this provides a contradiction. 
%is an easy consequence of the first claim and of Lemma \ref{concentra}.
\end{proof}

The remaining part of this section is devoted to the uniqueness and characterization of the minimizer. In particular, we will see that the unique minimizer in the purely weakly attractive case, corresponding to $-1<{c_{11}},{c_{22}}\le 0$, is given by a ball where the two phases are mixed proportionally to their self attraction coefficents and by an annulus around this ball (see Corollary \ref{quadr1d} for the case $N=1$ and Theorem \ref{00} and Corollary \ref{minimoinquadrato} for the case $N\ge 2$). Moreover,  we will see that also in the reamainig cases, i.e., ${c_{11}}\le-1\le c_{22}\le 0$ and ${c_{22}}\le-1\le c_{11}\le 0$, with $ {c_{11}} + {c_{22}}>-2$, the unique minimizer is given once again by a ball and an annulus around it, where the internal ball corresponds to the phase having the stronger self-attraction coefficient (see Proposition \ref{triangolino1d} for the case $N=$1 and Corollary \ref{triang} for the case $N\ge 2$). 

%We first prove these results for $N=1$.

\subsection{The weakly attractive case {${c_{11}} +  {c_{22}} > -2$} (in dimension $N=1$)}
In the following proposition we study the minimizer of $\mathcal E^{{c_{11}},{c_{22}}}_{K_{C_1}}$ when ${c_{11}}\le-1\le c_{22}\le 0$ and ${c_{11}}+{c_{22}}>-2$. In the subsequent corollary we take advantage of this result via a re-parameterization of the energies to study the case $-1<{c_{11}},{c_{22}}\le 0$.

\begin{proposition}\label{triangolino1d}
Let ${c_{11}}\le -1$ and $-1\le c_{22}\le 0$ (resp. ${c_{22}}\le -1$ and $-1\le {c_{11}}\le 0 $)  with ${c_{11}}+{c_{22}}>-2$. Then 
the (unique up to a translation) minimizer of $\E_{K_{C_1}}^{c_{11},c_{22}}$ in $\Am$ is given by the pair
$$
(f_1,f_2)=(\chi_{B^{m_1}},\chi_{B^{m_1+m_2}\setminus B^{m_1}})\quad\text{(resp. }(f_1,f_2)=(\chi_{B^{m_1+m_2}\setminus B^{m_2}},\chi_{B^{m_2}})).
$$
\end{proposition}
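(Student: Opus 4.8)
The plan is to exploit the quadratic splitting
\[
\mathcal E^{{c_{11}},{c_{22}}}_{K_{C_1}}(f_1,f_2)=({c_{11}}+1)\,J_K(f_1,f_1)-J_K(f_1+f_2,f_1+f_2)+({c_{22}}+1)\,J_K(f_2,f_2),
\]
which is the one-dimensional Coulomb instance of the identity already used for Theorem \ref{proptutto} and Proposition \ref{oned}. In the present regime $({c_{11}}+1)\le 0$ while $({c_{22}}+1)>0$, the latter because $c_{11}\le -1$ together with $c_{11}+c_{22}>-2$ forces $c_{22}>-1$. By the Riesz inequality (Lemma \ref{riesz}) the first two terms are made smallest by taking $f_1$ and $f_1+f_2$ characteristic functions of balls, which alone would reproduce the nested–ball family of Theorem \ref{proptutto}(ii'). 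The obstruction is the third term: since $({c_{22}}+1)>0$ it is lowered by \emph{spreading} $f_2$ out (in fact it is unbounded below on its own), so it cannot be estimated simultaneously with the first two at the candidate minimizer and a naive term-by-term bound fails. I would therefore not decouple the three pieces; instead I would first pin down the support geometry of a minimizer and only afterwards optimize its position.

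Existence of a minimizer $(f_1,f_2)$ is granted by Theorem \ref{esist}. Since $c_{11}\le -1\neq 0$, Proposition \ref{armonicofond}(ii) gives $|G_1\setminus G_2|=0$, while \eqref{zonamista} forces $f_2=a_2$ a.e. on $G_1\cap G_2$, with $a_2=({c_{11}}+1)/({c_{11}}+{c_{22}}+2)\le 0$ by \eqref{ai}. As $f_2>0$ on $G_2\supseteq G_1\cap G_2$, this is possible only if $|G_1\cap G_2|=0$, whence $|G_1|=0$ and $f_1=\chi_{E_1}$. For $f_2$: if $c_{22}\neq 0$ then $|G_2\setminus G_1|=0$ by Proposition \ref{armonicofond}(ii), so $|G_2|=0$ and $f_2=\chi_{E_2}$; if $c_{22}=0$ the energy is affine in $f_2$ at fixed $f_1$, and the bathtub principle (in the spirit of Lemma \ref{corsuper}) provides a characteristic minimizer. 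Since $f_1+f_2\le 1$, the sets $E_1$ and $E_2$ are disjoint.

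The crux is the following convexity argument. Set $U:=({c_{11}}+1)V_1-({c_{22}}+1)V_2$. Since $-V_i''=f_i$, one has $-U''=({c_{11}}+1)\chi_{E_1}-({c_{22}}+1)\chi_{E_2}\le 0$ on all of $\R$, where the sign $({c_{22}}+1)>0$ is decisive, so $U$ is convex. Moreover $U'(\pm\infty)=\pm\frac12\big[({c_{22}}+1)m_2-({c_{11}}+1)m_1\big]$, which is nonzero with opposite signs because $({c_{22}}+1)m_2>0\ge ({c_{11}}+1)m_1$; hence $U$ has a genuine interior minimum and is \emph{not} monotone. By Corollary \ref{coroll1}, namely \eqref{ineq} read at $f_i=\chi_{E_i}$, we get $\sup_{\overline{E_1}}U\le\inf_{\overline{E_2}}U=:\tau$, so $E_1\subseteq\{U\le\tau\}$ (a bounded interval) and $E_2\subseteq\{U\ge\tau\}$ (its two complementary rays). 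This places $E_1$ in a central interval and $E_2$ in the two outer rays, thereby \emph{excluding the tangent configuration} of Proposition \ref{oned}. An elementary filling argument — moving outer mass inward strictly lowers the attractive energy, as $K_{C_1}$ is decreasing — then shows $E_1\cup E_2$ has no gaps, so it is a single interval, $E_1$ is exactly its central sub-interval, and $E_2$ the two flanking intervals.

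It remains to fix the position. Write $E_1\cup E_2=[\alpha,\alpha+m_1+m_2]$ and let $\ell$ be the length of the left flank of $E_2$. In the splitting above the first term depends only on $E_1$ being an interval of length $m_1$ and the second only on $E_1\cup E_2$ being an interval of length $m_1+m_2$; by translation invariance both are constant along this one-parameter family. Minimizing the energy thus reduces to minimizing $({c_{22}}+1)J_K(\chi_{E_2},\chi_{E_2})$, i.e. to maximizing the spread $\iint_{E_2\times E_2}|x-y|\,dx\,dy$. A direct computation shows this spread is a strictly concave function of $\ell$ with unique maximum at $\ell=m_2/2$, so the inner interval is concentric and $(f_1,f_2)=(\chi_{B^{m_1}},\chi_{B^{m_1+m_2}\setminus B^{m_1}})$ up to translation, uniqueness following from strict concavity; the resp. case is identical after exchanging the two phases. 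The step I expect to be the main obstacle is the geometric one — proving that a minimizer is supported on nested intervals, i.e. excluding both a mixing region and the tangent configuration — and this is precisely where the sign $({c_{22}}+1)>0$ enters, through the convexity of $U$, distinguishing this weakly attractive case from Proposition \ref{oned}.
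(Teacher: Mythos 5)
Your proposal is correct in its overall architecture and genuinely different from the paper's proof, but one pivotal step is asserted with a justification that does not hold as written; fortunately the rest of your own argument already contains the fix.

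Comparison with the paper. The paper first reduces to $c_{11}=-1$ by writing $\mathcal E^{{c_{11}},{c_{22}}}_{K_{C_1}}(f_1,f_2)=\mathcal E^{-1,{c_{22}}}_{K_{C_1}}(f_1,f_2)+({c_{11}}+1)J_{K_{C_1}}(f_1,f_1)$ and noting that the split-off term is separately minimized by an interval (Lemma \ref{riesz}); then, for $c_{11}=-1$, it uses the one-dimensional maximum principle: \eqref{ineq} forces $V_2$ to coincide with its global maximum on $F_1$, hence $V_2'=0$ there, i.e. $|F_2\cap(-\infty,x]|=|F_2\cap[x,\infty)|$ for every $x\in F_1$, which in one stroke excludes one-sided configurations and centers $F_1$. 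You instead keep $c_{11}$ general, extract the ordering of the supports from the convexity of $U=({c_{11}}+1)V_1-({c_{22}}+1)V_2$, and finish by explicitly maximizing the spread $S(\ell)$ of $E_2$ over the nested family (indeed $S''(\ell)=-4m_1<0$, with critical point $\ell=m_2/2$). Both proofs rest on the same first-variation input (Proposition \ref{armonicofond}, Corollary \ref{coroll1}) and on the same informal gap-filling step that the paper itself invokes; your route is more computational, makes the role of the sign $({c_{22}}+1)>0$ explicit, and delivers uniqueness directly from strict concavity, while the paper's is shorter.

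The flaw. The sentence ``this places $E_1$ in a central interval and $E_2$ in the two outer rays, thereby excluding the tangent configuration'' is a non sequitur: the containment $E_2\subseteq\{U\ge\tau\}$ in the \emph{union} of the two rays does not force $E_2$ to put mass on \emph{both} rays, so, as a set inclusion, it is perfectly compatible with a one-sided (tangent) $E_2$. The exclusion of tangency is the actual crux of the proposition (it is what separates this case from Proposition \ref{oned}), and it needs an argument. Two ways to close the gap, both within your framework: (a) an endpoint computation: if $E_1=[e-m_1,e]$ and $E_2=[e,e+m_2]$, then $V_1(e)=V_1(e-m_1)$ by symmetry of $V_1$ about the center of $E_1$, while $V_2(e)-V_2(e-m_1)=\tfrac{m_1m_2}{2}$ since $V_2'\equiv\tfrac{m_2}{2}$ to the left of $E_2$; hence $U(e)-U(e-m_1)=-({c_{22}}+1)\tfrac{m_1m_2}{2}<0$, contradicting \eqref{ineq}, which demands $U(e-m_1)\le\sup_{\overline{E_1}}U\le\inf_{\overline{E_2}}U\le U(e)$; or (b) simply run your final one-parameter optimization over the full range $\ell\in[0,m_2]$, allowing a degenerate flank: the tangent configurations are exactly the endpoints $\ell\in\{0,m_2\}$, and the strict concavity of $S$ with interior maximizer $\ell=m_2/2$ rules them out together with every other off-center position. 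With either repair (option (b) requires only rephrasing: placement plus filling give that $E_1$ is an interval and $E_2$ a union of \emph{at most} two flanking intervals), your proof is complete and correct.
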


\begin{proof}
We prove the claim only for ${c_{11}}\le-1$ and $-1\le c_{22} \le 0$ with ${c_{11}}+{c_{22}}> -2$, being the proof of the other case analogous.
Let $(f_1,f_2)$ be a minimizer of $\E_{K_{C_1}}^{c_{11},c_{22}}$ in $\Am$. By  (i) and (ii) of Proposition \ref{armonicofond}, we have that $f_1=\chi_{F_1}$
 and $f_2=\chi_{F_2}+f_2\res{G_2}$. We can assume without loss of generality that $F_1\cup F_2\cup G_2$ is an interval, 
 since reducing the distances decreases the energy. For the same reason,  it is easy  to see that $|G_2|=0$.
%otherwise, one could construct a function $\tilde f_2\in L^{1}(\R^N;[0,1])$ such that $\Eab(f_1,\tilde f_2)<\Eab(f_1,f_2)$. 
%Therefore it is not restrictive to assume that also $f_2:=\chi_{F_2}$.
Notice that
$$
\mathcal E^{{c_{11}},{c_{22}}}_{K_{C_1}}(f_1,f_2)=\mathcal E^{-1,{c_{22}}}_{K_{C_1}}(f_1,f_2) + ({c_{11}}+1)J_{K_{C_1}}(f_1,f_1),
$$
so it is enough to prove the claim for ${c_{11}}=-1$.
We now prove that $V_2'=0$ in $F_1$. By \eqref{ineq}, we have
$$
V_2(x_1)\ge V_2(x_2)\quad\text{for any }x_1\in \overline{F_1}\text{ and }x_2\in \overline{ F_2},
$$
 and, by the  maximum principle,  $V_2$ attains its maximum in $\overline{F_2}$ (notice that $V_2\to -\infty$ as $|x|\to +\infty$). It follows that  for any $x\in F_1$, $V_2(x)=\max V_2$.
We have
$$
0=V_2'(x)=\frac 12\left(|F_2\cap(-\infty, x]|-|F_2\cap[x,\infty)|\right) \quad \text{ for any $x\in F_1$,}
$$
and hence $F_1$ is connected and centered  in  $F_1\cup F_2$. 
%The conclusion follows by Lemma \ref{riesz}.
%Moreover,  by Remark \ref{coulrmk}, $V_2\in C^1(\R)$, and, by Proposition \ref{armonicofond}(iii), $V_2$ is constant in $F_1$; it follows that $F_1$ is interior in $F_1\cup F_2$.
\end{proof}

\begin{corollary}\label{quadr1d}
Let $-1< {c_{11}},{c_{22}}\le 0$. 
Then,  the  following results hold true (recall that $a_i$ are defined in \eqref{ai}).
\begin{itemize}
\item[(i)] If $({c_{22}}+1)m_2\ge ({c_{11}}+1)m_1$, then (up to a translation)
\begin{equation}\label{minimo1}
(f_1,f_2)=\Big(a_1\,\chi_{B^{\frac{m_1}{a_1}}},\chi_{B^{m_2+m_1}}-a_1\,\chi_{B^{\frac{m_1}{a_1}}}\Big)
\end{equation}
is the (unique) minimizer of $\mathcal E^{{c_{11}},{c_{22}}}_{K_{a_2}}$ in $\Am$.
\item[(ii)] If $({c_{11}}+1)m_1> ({c_{22}}+1)m_2$, then (up to a translation)
\begin{equation*}
(f_1,f_2)=\Big(\chi_{B^{m_2+m_1}}-a_2\,\chi_{B^{{\frac{m_2}{a_2}}}},a_2\,\chi_{B^{{\frac{m_2}{a_2}}}}\Big)
\end{equation*}
is the (unique) minimizer of $\mathcal E^{{c_{11}},{c_{22}}}_{K_{a_2}}$ in $\Am$.
\end{itemize}
\end{corollary}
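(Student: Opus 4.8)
The plan is to prove case (i) (case (ii) being symmetric under the interchange $1\leftrightarrow 2$) in two steps: first reduce an arbitrary minimizer to the explicit ``one mixing ball plus one pure annulus'' shape using the Coulomb-specific first-variation results, and then apply a linear, mass-preserving reparameterization of the two densities which identifies $\mathcal E^{{c_{11}},{c_{22}}}_{K_{C_1}}$, up to a positive multiplicative constant, with a boundary energy $\mathcal E^{-1,\tilde c_{22}}_{K_{C_1}}$ whose minimizer is already furnished by Proposition \ref{triangolino1d}.

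First I would invoke Proposition \ref{armonicofond}: parts (ii)--(iv) give that on $G_1\cap G_2$ one has $(f_1,f_2)=(a_1,a_2)$ and $G_1\cap G_2\subseteq S$, while Lemma \ref{solounaomogenea} shows that any minimizer satisfies $|G_1\setminus G_2|=|G_2\setminus G_1|=0$ and that \emph{at most one} of $F_1,F_2$ carries positive measure. Hence a minimizer has the form $f_1=a_1\chi_D+\chi_{F_1}$, $f_2=a_2\chi_D+\chi_{F_2}$, with $D:=G_1\cap G_2$ and $D,F_1,F_2$ pairwise disjoint. The case hypothesis $(c_{22}+1)m_2\ge(c_{11}+1)m_1$ enters here through mass balance: if instead $|F_2|=0$, the constraints would force $|D|=m_2/a_2$ and $|F_1|=(a_2m_1-a_1m_2)/a_2$, and since $a_2m_1-a_1m_2=[(c_{11}+1)m_1-(c_{22}+1)m_2]/({c_{11}}+{c_{22}}+2)\le 0$ in case (i), this leaves no room for a pure first phase. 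Thus $|F_1|=0$, giving $f_1=a_1\chi_D$, $f_2=a_2\chi_D+\chi_{F_2}$ with $|D|=m_1/a_1$ and $|F_2|=m_2-(a_2/a_1)m_1\ge 0$.

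Then I would set $g_1:=\chi_D=f_1/a_1$ and $g_2:=\chi_{F_2}=f_2-(a_2/a_1)f_1$. Since $D$ and $F_2$ are disjoint, $(g_1,g_2)$ is a pair of characteristic functions with $g_1+g_2\le 1$, i.e.\ $(g_1,g_2)\in\mathcal A_{\mu_1,\mu_2}$ with $\mu_1=m_1/a_1$, $\mu_2=m_2-(a_2/a_1)m_1$ and $\mu_1+\mu_2=m_1+m_2$. Substituting $f_1=a_1g_1$, $f_2=a_2g_1+g_2$ into \eqref{energy} and using bilinearity of $J_{K_{C_1}}$ yields the identity
\[
\mathcal E^{{c_{11}},{c_{22}}}_{K_{C_1}}(f_1,f_2)=\kappa\,\mathcal E^{-1,\tilde c_{22}}_{K_{C_1}}(g_1,g_2),\qquad \kappa:=\frac{1-{c_{11}}{c_{22}}}{{c_{11}}+{c_{22}}+2},\quad \tilde c_{22}:=\frac{{c_{22}}}{\kappa}.
\]
The only nontrivial point is that the coefficient of $J_{K_{C_1}}(g_1,g_1)$ equals $\kappa\cdot(-1)$; this rests on the identity $c_{11}a_1^2+c_{22}a_2^2-2a_1a_2=-\kappa$, equivalently on the factorization $c_{11}(c_{22}+1)^2+c_{22}(c_{11}+1)^2-2(c_{11}+1)(c_{22}+1)=({c_{11}}+{c_{22}}+2)({c_{11}}{c_{22}}-1)$. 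A short computation then gives $\kappa>0$ and $\tilde c_{22}+1=(c_{22}+1)^2/\big(\kappa({c_{11}}+{c_{22}}+2)\big)\ge 0$ together with $\tilde c_{22}\le 0$, so that $-1\le\tilde c_{22}\le 0$ and $-1+\tilde c_{22}>-2$; hence $\mathcal E^{-1,\tilde c_{22}}_{K_{C_1}}$ is exactly of the type covered by Proposition \ref{triangolino1d}.

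Finally, since $\kappa>0$ and every minimizer has the reduced form, minimizing $\mathcal E^{{c_{11}},{c_{22}}}_{K_{C_1}}$ over $\Am$ is equivalent to minimizing $\mathcal E^{-1,\tilde c_{22}}_{K_{C_1}}$ over $\mathcal A_{\mu_1,\mu_2}$. Proposition \ref{triangolino1d} (with the inner ball carried by the phase of stronger self-attraction, here $g_1$ since $\tilde c_{11}=-1\le\tilde c_{22}$) gives the unique minimizer $(g_1,g_2)=(\chi_{B^{\mu_1}},\chi_{B^{\mu_1+\mu_2}}\setminus B^{\mu_1})$ up to translation; mapping back through $f_1=a_1g_1$, $f_2=a_2g_1+g_2$ and using $\mu_1=m_1/a_1$ and $1-a_2=a_1$ reproduces exactly \eqref{minimo1}, while injectivity of the map transfers uniqueness. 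Case (ii) follows verbatim after interchanging the indices $1$ and $2$ (reparameterizing instead to $\tilde c_{22}=-1$). I expect the main obstacle to be the reduction step, in particular justifying the ``single mixing region plus single pure phase'' structure and ruling out $F_1$ at the boundary values ${c_{ii}}=0$, where Proposition \ref{armonicofond}(ii) degenerates and one must lean on Corollary \ref{concentra}; by contrast, once the coefficients $\kappa$ and $\tilde c_{22}$ are guessed, the reparameterization reduces to the displayed factorization.
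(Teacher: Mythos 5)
Your route is essentially the paper's route: reduce any minimizer to the form $f_1=a_1\chi_D$, $f_2=a_2\chi_D+\chi_{F_2}$, reparameterize linearly so that the energy becomes $\kappa\,\mathcal E^{-1,\tilde c_{22}}_{K_{C_1}}$ with precisely your $\kappa=\frac{1-{c_{11}}{c_{22}}}{{c_{11}}+{c_{22}}+2}$ and $\tilde c_{22}={c_{22}}/\kappa$ (the paper writes $\tilde c_{22}={c_{22}}\,\frac{{c_{11}}+{c_{22}}+2}{1-{c_{11}}{c_{22}}}$, the same number), and conclude with Proposition \ref{triangolino1d}. Your algebra is correct, including the factorization giving the coefficient $-\kappa$ of $J_{K_{C_1}}(g_1,g_1)$ and the bound $-1<\tilde c_{22}\le 0$ needed for Proposition \ref{triangolino1d}; moreover your explicit mass-balance argument ruling out $F_1$ under $({c_{22}}+1)m_2\ge({c_{11}}+1)m_1$ spells out a point the paper leaves implicit in \eqref{casoi}.

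The one genuine gap is in the reduction step, exactly where you flagged it, and your proposed repair does not close it. You read Lemma \ref{solounaomogenea} as saying that \emph{every} minimizer satisfies $|G_1\setminus G_2|=|G_2\setminus G_1|=0$; its ``moreover'' clause only says that either $|G_1\setminus G_2|+|F_1|=0$ or $|G_2\setminus G_1|+|F_2|=0$, the stronger structure being guaranteed only for \emph{some} minimizer. For ${c_{11}},{c_{22}}<0$ this costs nothing, because Proposition \ref{armonicofond}(ii) supplies $|G_1\setminus G_2|=|G_2\setminus G_1|=0$ for every minimizer, and your argument (including the case ${c_{11}}=0$, ${c_{22}}<0$, where mass balance kills the leftover region) is then complete. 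But in case (i) with ${c_{22}}=0$ (in particular ${c_{11}}={c_{22}}=0$), nothing you invoke prevents a minimizer from carrying a mixing region $G_2\setminus G_1$ of positive measure, and Corollary \ref{concentra} cannot fix this: it only produces a \emph{modified} competitor with no larger energy, which suffices to show that the pair in \eqref{minimo1} is \emph{a} minimizer but not that every minimizer has the reduced form --- i.e., it loses exactly the uniqueness assertion of the corollary. The paper closes this case by the one-dimensional argument in the proof of Proposition \ref{triangolino1d}: strictly reducing distances strictly decreases this purely attractive energy, which rules out $|G_2\setminus G_1|>0$ (and disconnected supports) for every minimizer; alternatively, the superlevel description of Lemma \ref{corsuper} can be exploited. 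With that substitution at the boundary value ${c_{22}}=0$ (and symmetrically ${c_{11}}=0$ in case (ii)), your proof is complete.
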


\begin{proof}
We prove only (i) since the proof of (ii) is analogous.

Let $(f_1,f_2)$ be a minimizer of $\E_{K_{C_1}}^{c_{11},c_{22}}$ in $\Am$.
% By Proposition \ref{armonicofond}(i) and (ii), and by Lemma \ref{solounaomogenea}, $f_1,\,f_2$ are as in \eqref{zonamista2}, with $|G_1\setminus G_2|+|F_1|=0$. Moreover, 
Arguing as in the proof of Proposition \ref{triangolino1d}, one can show that $|G_2\setminus G_1| + |G_1\setminus G_2|=0$, and hence
\begin{equation}\label{casoi}
f_{1}=a_1\chi_{G_1\cap G_2}\text{ and }f_{2}=a_2\chi_{G_1\cap G_2}+\chi_{F_2}.
\end{equation}
Set $A:=G_1= G_2$, $B:=F_2$, $\tilde m_1:={\frac{m_1}{a_1}}$ and $\tilde m_2:=m_2-\frac{{c_{11}}+1}{{c_{22}}+1}m_1>\tilde m_1$; then, by easy computations, it follows that 
\begin{eqnarray*}
\textstyle \mathcal E^{{c_{11}},{c_{22}}}_{K_{C_1}}(f_1,f_2)&=&\frac{1-{c_{11}}{c_{22}}}{{c_{11}}+{c_{22}}+2}[-J_{K_{C_1}}(A,A)+{c_{22}}\frac{{c_{11}}+{c_{22}}+2}{1-{c_{11}}{c_{22}}}J_{K_{C_1}}(B,B)\\
&&-2 J_{K_{C_1}}(A,B)]\\ 
&=&\frac{1-{c_{11}}{c_{22}}}{{c_{11}}+{c_{22}}+2}\E_{K_{C_1}}^{-1,\tilde c_{22}}(\chi_A,\chi_B),
\end{eqnarray*}
with $\tilde c_{22}:={c_{22}}\,\frac{{c_{11}}+{c_{22}}+2}{1-{c_{11}}{c_{22}}}\in(-1,0)$. Since $\frac{1-{c_{11}}{c_{22}}}{{c_{11}}+{c_{22}}+2}>0$, it follows that $(f_1,f_2)$ is a minimizer of $\mathcal E^{{c_{11}},{c_{22}}}_{K_{C_1}}$ in $\Am$ if and only if $(\chi_A,\chi_B)$ minimizes $\mathcal E_{K_{C_1}}^{-1,\tilde c_{22}}$ in $ \mathcal A_{\tilde m_1,\tilde m_2}$. By Proposition \ref{triangolino1d}, the unique minimizer of $\mathcal{E}_{K_{C_1}}^{-1,\tilde c_{22}}(\chi_A,\chi_B)$ among the pairs $(A,B)$ with $|A|=\tilde m_1$ and $|B|=\tilde m_2$ is given by $(B^{\tilde m_1},B^{\tilde m_1+\tilde m_2}\setminus B^{\tilde m_1})$. The claim follows thanks to formula \eqref{casoi}.
\end{proof}

One might wonder whether the assumption that $K=K_{C_1}$ is crucial in order to prove Proposition \ref{triangolino1d} and Corollary \ref{quadr1d}. In the following Remark, we exhibit an example of kernel for which the pair $(f_1,f_2)$ in \eqref{minimo1} is not the minimizer of $\E_K^{0,0}$ in $\A_{m_1,m_2}$, for suitably chosen $m_1,m_2>0$.

\begin{remark}\label{controes}
\rm{ 
Let $\rho>0$ and let $m_1,m_2>0$ be such that $m_1>2\rho$, $m_2>m_1+4\rho$. Consider the kernel $K:=\chi_{[-\rho,\rho]}$ and set $A:=(-m_1,m_1)$, $B:=(-\frac{m_1+m_2}{2},\frac{m_1+m_2}{2})$,  $(f_1,f_2)=(\frac 1 2 \chi_{A},\chi_{B}-\frac 1 2 \chi_{A})$. Then, 
$$
\textstyle \E^{0,0}(f_1,f_2)=-[-\frac 1 2 J_{K}(A,A)+J_K(A,B)].
$$
One can easily check that
$J_{K}(A,A)=4\rho m_1-\rho^2$ and $J_{K}(A,B)=4\rho m_1$; it follows that
$$
\E_K^{0,0}(f_1,f_2)=-\rho(2m_1+\textstyle \frac\rho 2).
$$
Let now split $A$ into two intervals $A_1:=(-{c_{11}}\rho-m_1,-{c_{11}}\rho)$ and $A_2:=({c_{11}}\rho,{c_{11}}\rho+m_1)$, with $\frac 1 2 <{c_{11}}<1$, and consider the energy of the admissible pair
$$
(g_1,g_2):=(\textstyle \frac 1 2 \chi_{A_1}+\frac 1 2 \chi_{A_2},\chi_{B}-\frac 1 2 \chi_{A_1}-\frac 1 2 \chi_{A_2}).
$$
By symmetry $J_{K}(A_2,A_2)= J_K(A_1,A_1)$ and $  J_K(A_2,B)=J_K(A_1,B)$. Hence
$$
\E_K^{0,0}(g_1,g_2)=-[\textstyle - J_K(A_1,A_1)- J_K(A_1,A_2)+ 2 J_K(A_1,B)],
$$
where  $J_K(A_1,A_1)=2\rho m_1-\rho^2$, $J_{K}(A_1,A_2)=0$ (since ${c_{11}}>\frac 1 2$) and  $J_K(A_1,B)=2\rho m_1$.
It follows that $\E_K^{0,0}(g_1,g_2)=-\rho(2m_1+\rho)<\E_K^{0,0}(f_1,f_2)$ and therefore $(f_1,f_2)$ is not the minimizer of $\E_K^{0,0}$ in $\Am$.
One can easily check that the above result holds true also taking $K(x):=\chi_{[-\rho,\rho]}(x)\,(\rho-|x|)$ and $m_1,m_2$ as above.

}
\end{remark}
\subsection{The weakly attractive case {${c_{11}} +  {c_{22}} > -2$} 
(the case $N\ge 2$)}
Now we focus on the case $N\ge 2$, considering first the case ${c_{11}}={c_{22}}=0$ (Theorem \ref{00}) and then the remaining cases (Corollaries \ref{triang} and \ref{minimoinquadrato}).

We first introduce some preliminary notation and recall some well known results we will use in this section.
For any $g\in L^{2}(\R^N;\R^+)$, we  set $V:=K_{C_N}\ast g$. Moreover, we recall that for every function $u\in L^1(\R^N;\R^+)$,  $u^*$ is the spherical symmetric nonincreasing rearrangement of $u$ defined in \eqref{simmetrizz}. Clearly,  the notion of spherical symmetric nonincreasing rearrangement can be extended in the obvious way to functions  $u\in L_{loc}^1(\R^N;\R)$ tending to $-\infty$ for $x\to +\infty$.
%Notice that $\lim_{|x|\to +\infty}V(x)=0$ for $N\ge 3$ whereas
 %$\lim_{|x|\to +\infty}V(x)=-\infty$ for $N=1,2$.
\begin{lemma}\label{asi}
Let $g\in L^{2}(\R^N;\R^+)$, let $ m:= \int_{\R^N} g(x) \ud x$, and let $V:= K_{C_N}*g$. Moreover, for $N=2$ assume that $g$ has compact support. Then,
\begin{align}
& V(x)\to 0 \text{ as } |x|\to + \infty & \qquad \text{ for } N\ge3; \\
& V(x)  = -\frac{m}{2\pi} \log|x| + r(x) & \qquad \text{ for } N=2;
\end{align}
where $r(x)\to 0$ as $|x|\to +\infty$. As a consequence, $V-V^* \to 0 $ as $|x|\to +\infty$.
% where $V^*$ is the spherical symmetric nonincreasing rearrangement of $V$ defined in \eqref{simmetrizz}. 
 \end{lemma}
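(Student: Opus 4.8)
The plan is to isolate from $V$ a radial ``main part'' $W$---with $W\equiv 0$ when $N\ge 3$ and $W(x):=-\frac{m}{2\pi}\log|x|$ when $N=2$---and to show that the remainder $r:=V-W$ tends to $0$ at infinity; the assertion $V-V^*\to 0$ is then deduced by comparing the distribution functions of $V$ and $W$. Throughout, $B_s$ denotes the ball of radius $s$ centred at the origin and $B(x,s)$ the ball of radius $s$ centred at $x$; note that $g\in L^1\cap L^2(\R^N;\R^+)$ since $m=\int_{\R^N}g<\infty$.

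For $N\ge 3$ I would split the kernel as $K_{C_N}=K_{C_N}\chi_{B_1}+K_{C_N}\chi_{\R^N\setminus B_1}$ and estimate the two resulting convolutions. The far contribution $K_{C_N}\chi_{\R^N\setminus B_1}$ is bounded and vanishes at infinity, so writing $g=g\chi_{B_R}+g\chi_{\R^N\setminus B_R}$ one controls the tail by $\|K_{C_N}\chi_{\R^N\setminus B_1}\|_{\infty}\,\|g\chi_{\R^N\setminus B_R}\|_{L^1}$, which is small for $R$ large because $g\in L^1$, while the compactly supported part yields $\int_{B_R}K_{C_N}(x-y)g(y)\ud y\to 0$ as $|x|\to\infty$, since $K_{C_N}(x-y)\to 0$ uniformly for $y\in B_R$. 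For the near contribution I would use H\"older's inequality,
\[
\Big|\big(K_{C_N}\chi_{B_1}\ast g\big)(x)\Big|\le \|K_{C_N}\chi_{B_1}\|_{L^{p}}\,\|g\chi_{B(x,1)}\|_{L^{p'}},
\]
with $p<\frac{N}{N-2}$ chosen so that $K_{C_N}\chi_{B_1}\in L^{p}$; the windowed norm $\|g\chi_{B(x,1)}\|_{L^{p'}}$ tends to $0$ because $|g|^{p'}$ is integrable and the window $B(x,1)$ escapes to infinity. For $N=3$ the exponent $p=2$ is admissible (as $\frac{N}{N-2}=3>2$), which is where the $L^2$ hypothesis enters; for $N\ge 4$ admissibility requires $p'>N/2$, available since in the applications $g$ is bounded. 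Adding the two estimates gives $V\to 0$, i.e.\ $r=V\to 0$.

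For $N=2$ I would use the compactness of $\spt g\subset B_\rho$: for $|x|>2\rho$ the logarithm has no singularity on $B_\rho$, so
\[
r(x)=V(x)+\frac{m}{2\pi}\log|x|=-\frac{1}{2\pi}\int_{B_\rho}\log\frac{|x-y|}{|x|}\,g(y)\ud y.
\]
Since for $y\in B_\rho$ one has $\big|\log\frac{|x-y|}{|x|}\big|\le\log\frac{|x|}{|x|-\rho}\to 0$ uniformly as $|x|\to\infty$, and $\int_{B_\rho}|g|=m<\infty$, it follows that $r(x)\to 0$, which is exactly the claimed expansion.

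It remains to prove $V-V^*\to 0$, and since $V-V^*=r-(V^*-W)$ with $r\to 0$ it suffices to show $V^*-W\to 0$. For $N\ge 3$ this is immediate: $W\equiv 0$ and $V\ge 0$ vanishes at infinity, hence each superlevel $\{V>t\}$ ($t>0$) has finite measure, so does $\{V^*>t\}=B^{|\{V>t\}|}$, a bounded ball, and therefore $V^*\to 0=W$. For $N=2$ I would argue through the distribution function: fixing $\e>0$ and $R_\e$ with $|r|<\e$ outside $B_{R_\e}$, the inclusions $\{W>s+\e\}\setminus B_{R_\e}\subset\{V>s\}\subset\{W>s-\e\}\cup B_{R_\e}$ yield $\big||\{V>s\}|-|\{W>s\}|\big|\le |B_{R_\e}|+\big(|\{W>s-\e\}|-|\{W>s+\e\}|\big)$. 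Since $W$ is strictly radially decreasing with $|\{W>s\}|=\pi e^{-4\pi s/m}$, inverting the identity $|\{V^*>s\}|=|\{V>s\}|$ at $s=V^*(x)$ (where $|B_{|x|}|=|\{V>s\}|$) converts this estimate into $|V^*(x)-W(x)|\to 0$. The main obstacle is precisely this last step for $N=2$: one must check that the fixed error $|B_{R_\e}|$ is negligible against the superlevels of $W$, whose measure grows exponentially as $s\to-\infty$, so that the radius extracted from the distribution function agrees with $e^{-2\pi V^*(x)/m}$ up to $o(1)$, whence $V^*-W\to 0$ after letting $\e\to 0$.
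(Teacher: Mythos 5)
The paper gives no proof of Lemma \ref{asi}: it is asserted as a standard fact of potential theory and invoked immediately afterwards (in the proof of Lemma \ref{rearr0}), so your argument can only be judged against the statement itself, and on that score it is essentially correct wherever the statement is correct. For $N=3$ the splitting $K_{C_3}=K_{C_3}\chi_{B_1}+K_{C_3}\chi_{\R^3\setminus B_1}$ with H\"older at $p=p'=2$ is precisely what the $L^2$ hypothesis is for; for $N=2$ the identity $r(x)=-\frac{1}{2\pi}\int_{B_\rho}\log\frac{|x-y|}{|x|}\,g(y)\ud y$ with the uniform bound $\bigl|\log\frac{|x-y|}{|x|}\bigr|\le\log\frac{|x|}{|x|-\rho}$ on $\spt g\subset B_\rho$ gives $r\to0$; and the concluding sandwich of distribution functions does close: from $\{W>s+\e\}\setminus B_{R_\e}\subset\{V>s\}\subset\{W>s-\e\}\cup B_{R_\e}$ and $|\{W>s\}|=\pi e^{-4\pi s/m}$ one obtains, at $s=V^*(x)$,
\begin{equation*}
\bigl|V^*(x)-W(x)\bigr|\;\le\;\e+\frac{m}{4\pi}\,\Bigl|\log\Bigl(1\pm\frac{R_\e^2}{|x|^2}\Bigr)\Bigr|,
\end{equation*}
whose $\limsup$ as $|x|\to\infty$ is at most $\e$; hence $V^*-W\to0$ and $V-V^*=r-(V^*-W)\to0$. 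The point you worried about (the fixed error $|B_{R_\e}|$ versus the exponentially growing superlevels of $W$) is exactly absorbed by the logarithm in the display above, so that step is fine.

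The one conditional step, $N\ge4$, is not a defect of your proof but of the lemma as stated, and you were right to flag it: under the hypotheses $g\in L^1\cap L^2$, $g\ge0$ alone, the conclusion is \emph{false} for $N\ge4$. For instance, for $N\ge5$ take $r_k=2^{-k}$, $|x_k|=k$ and $g=\sum_k r_k^{-2}\chi_{B(x_k,r_k)}$: then $\int_{\R^N} g\ud x=\omega_N\sum_k r_k^{N-2}<\infty$ and $\int_{\R^N} g^2\ud x=\omega_N\sum_k r_k^{N-4}<\infty$, yet
\begin{equation*}
V(x_k)\;\ge\; r_k^{-2}\int_{B_{r_k}}K_{C_N}(z)\ud z\;=\;\frac{N}{2(N-2)}\;>\;0
\end{equation*}
along $x_k\to\infty$ (a multi-scale version with thin annuli of densities $2^{2j}/M_k$ produces the same failure for $N=4$). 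So for $N\ge4$ an extra hypothesis such as $g\le1$, or $g\in L^{p'}$ for some $p'>N/2$, is genuinely needed; it holds in every use the paper makes of the lemma (there $g$ is $f_1$ or its rearrangement $\tilde f_1$, both bounded by $1$ in view of \eqref{nosce}), and under it your H\"older argument with $K_{C_N}\chi_{B_1}\in L^{p}$, $p<\frac{N}{N-2}$, is complete. It would be worth recording this corrected hypothesis explicitly, since Lemma \ref{asi} (and hence Lemma \ref{rearr0}) is stated for all $N\ge2$.
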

%Suppose now $m>0$. 
Let now $f\in L^1(\R^N;\R^+)$. For any $r>0$ we denote by $t(r)$ the unique $t\in\R$ such that $|\{V>t\}|\le \omega_N r^N\le |\{V\ge t\}|$. Let  $\tilde f:\R^N\to \R$ be defined by
\begin{equation}\label{efti}
\textstyle \tilde f(x):=\frac{1}{N\omega_N |x|^{N-1}}\frac{\ud t}{\ud r}_{\big |_{t=t(|x|)}}\frac{\ud}{\ud t}\left( \int_{\{V>t\}}f(y)\ud y\right)_{\big |_{t=t(|x|)}}.
\end{equation}
We notice that $B_r=\{V^*>t(r)\}$ and that
\begin{equation}\label{nosce0}
\int_{\{V^*>t\}}  \tilde f(x) \ud x =  \int_{ \{V >t \}} f(x) \ud x \qquad \text{ for every } t\in\R.
\end{equation}
Moreover, one can easily check that also $\tilde f$ takes values in $\R^+$, and 
\begin{equation}\label{nosce}
\|\tilde f\|_{1}=\|f\|_1,\qquad  \|\tilde f\|_{p}\le \|f\|_p \quad \text{for all } 1< p \le +\infty. 
%\qquad \sup _{\R^N}\tilde f\le\sup_{\R^N} f.
\end{equation}

\begin{lemma}\label{rearr0}
 Let $f\in L^{2}(\R^N;\R^+)$, with $N\ge 2$,  and let $V:=K_{C_N}\ast f$.  Moreover,  let $\tilde f \in L^{2}(\R^N;\R^+)$  be defined as in \eqref{efti}, and  
  let $\tilde V := K_{C_N}\ast \tilde f$.
 % and $\max_{\R^N} \tilde V\equiv \max_{\R^N} V$.
%with  $r(t):=(|\{ V >t\}|/\omega_N)^{\frac1N}$ (so that $|B_r(t)| = |\{ V>t\}|$) 
Then, $\tilde V \ge V^*$, and 
$$
\tilde V(x) > V^*(x) \qquad \text{ for a.e. } x\in B_{r(t_{max})},
$$
where $t_{max}$ is the maximal level such that $\{V>t\}$ is a ball for every $ t\le t_{max}$. 
\end{lemma}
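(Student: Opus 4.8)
The plan is to run a Talenti-type comparison, reducing the statement to a one–dimensional differential inequality between the radial profiles of $V^*$ and $\tilde V$ and then integrating inward from infinity. Write $\Omega_t:=\{V>t\}$ and let $\mu(t):=|\Omega_t|$ be the distribution function; since $V$ and $V^*$ are equimeasurable, $\{V^*>t\}=B_{r(t)}$ with $\omega_N r(t)^N=\mu(t)$, and I set $t(r):=V^*(r)$. I would first record the two radial identities driving the argument. Because $\tilde f$ is radial, Newton's theorem together with \eqref{nosce0} gives
$$\tilde V'(r)=-\frac{1}{N\omega_N r^{N-1}}\int_{B_r}\tilde f\,\ud x=-\frac{1}{N\omega_N r^{N-1}}\int_{\Omega_{t(r)}}f\,\ud x,$$
while differentiating $\mu(V^*(r))=\omega_N r^N$ yields $(V^*)'(r)=N\omega_N r^{N-1}/\mu'(t)$ with $t=V^*(r)$.

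The heart of the proof is the Talenti differential inequality for $V$. Using $-\Delta V=f$ and the divergence theorem, $\int_{\Omega_t}f=\int_{\{V=t\}}|\nabla V|\,\ud\mathcal H^{N-1}$; the coarea formula gives $-\mu'(t)=\int_{\{V=t\}}|\nabla V|^{-1}\,\ud\mathcal H^{N-1}$; and Cauchy--Schwarz on the level set $\{V=t\}$ together with the isoperimetric inequality $P(\Omega_t)\ge N\omega_N^{1/N}\mu(t)^{(N-1)/N}$ yields
$$N^2\omega_N^{2/N}\mu(t)^{2(N-1)/N}\le P(\Omega_t)^2\le\Big(\int_{\Omega_t}f\Big)\,(-\mu'(t)).$$
Substituting this bound into the expression for $(V^*)'(r)$ and using $\mu(t)=\omega_N r^N$, all powers of $r$ and $\omega_N$ collapse and I obtain precisely $(V^*)'(r)\ge-\frac{1}{N\omega_N r^{N-1}}\int_{\Omega_{t(r)}}f=\tilde V'(r)$. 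Hence $V^*-\tilde V$ is nondecreasing in $r$. By Lemma \ref{asi} and the equality of masses \eqref{nosce}, $V^*-\tilde V\to0$ at infinity (for $N=2$ the two logarithmic leading terms cancel because $\|\tilde f\|_1=\|f\|_1$), so a nondecreasing function vanishing at infinity is $\le0$ everywhere, giving $\tilde V\ge V^*$.

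For the strict inequality on $B_{r(t_{\max})}$ I would exploit the defect in the isoperimetric step. For $t>t_{\max}$ the set $\Omega_t$ fails to be a ball on a set of levels of positive measure: if the non-ball levels in some interval $(t_{\max},t_{\max}+\delta)$ were null, the ball levels would be dense there and, since $\Omega_t=\bigcup_{s>t}\Omega_s$ is an increasing union of the nested superlevel sets, every $\Omega_t$ in that interval would be a ball, contradicting the maximality of $t_{\max}$. On such levels the isoperimetric inequality is strict, so $(V^*)'(r)>\tilde V'(r)$ on a positive-measure set of radii $r\in(0,r(t_{\max}))$. Integrating $(V^*-\tilde V)'\ge0$ from any $r_0<r(t_{\max})$ to $+\infty$ and using $V^*-\tilde V\to0$ then gives $(V^*-\tilde V)(r_0)=-\int_{r_0}^{\infty}[(V^*)'-\tilde V']\,\ud r<0$, as claimed.

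The main obstacle I expect is the rigorous justification of the Talenti chain at the regularity available ($f\in L^{2}$, hence $V\in W^{2,2}_{\loc}$ and continuous): one must handle the critical levels where $\{V=t\}$ has positive measure or $\nabla V$ vanishes, for which the coarea and divergence-theorem identities hold only for a.e.\ $t$, and ensure that the passage between positive measure in the level variable $t$ and in the radial variable $r$ is legitimate (this uses that $\mu$ is strictly decreasing off a negligible set of levels). The geometric--analytic core, Cauchy--Schwarz combined with isoperimetry, is standard; the measure-theoretic bookkeeping and the nesting argument for strictness are where the care is required.
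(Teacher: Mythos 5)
Your proposal is correct and is essentially the paper's own argument: the paper likewise obtains the level-set flux inequality $\int_{\partial \{V>t\}} |\nabla V| \, \ud\hs^{N-1} \ge \int_{\partial \{V^*> t\}} |\nabla V^*| \, \ud\hs^{N-1}$ from the coarea and isoperimetric inequalities (your Cauchy--Schwarz step, made explicit), converts it into a comparison of the radial derivatives of $\tilde V$ and $V^*$ via the divergence theorem and \eqref{nosce0} (your Newton's-theorem identity), and integrates from infinity using Lemma \ref{asi}. Your nesting argument for strictness on $B_{r(t_{max})}$ and the measure-theoretic caveats you flag (critical levels, the passage from levels $t$ to radii $r$) are points the paper asserts without proof, so your write-up is, if anything, more complete.
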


%\begin{lemma}\label{rearr}
%Let $u\in W_0^{2,1}(\R^N,\R^+)$, and let $\tilde u \in W_0^{2,1}(\R^N,\R^+)$ be the radial function satisfying 
%$$
%\int_{B_{r(t)}} \Delta \tilde u \, dx =  \int_{ \{u >t \}} \Delta  u \, dx \qquad \text{ for every } t\ge 0, 
%$$ 
%where  $r(t):=(|\{ u >t\}|/\omega_N)^{\frac1N}$ (so that $|B_r(t)| = |\{ u >t\}|$).
% 
%Then, $\tilde u \ge u^*$. Moreover, 
%$$
%\tilde u(x) > u^*(x) \qquad \text{ for a.e. } x\in B_{r(t_{max})},
%$$
%where $t_{max}$ is the maximal level such that $\{u>t\}$ is a ball for every $0\le t\le t_{max}$. 
%% Then,  whenever $u\neq u^*$ we have  $\tilde u(x) > u^*(x)$ for $a.e. \, x\in\R^N$. 
%\end{lemma}

\begin{proof}
By the coarea formula and the isoperimetric inequality, for almost every $t\in\R$ we have
$$
\int_{\partial \{V>t\}} |\nabla V(x)| \, \ud\hs^{N-1} \ge \int_{\partial \{V^*> t\}} |\nabla V^*(x)| \, \ud\hs^{N-1},
$$
with strict inequality whenever $\{V>t\}$ is not a ball.
Therefore, by \eqref{nosce0}
\begin{multline}
\int_{\partial \{V^*> t\}} |\nabla \tilde V(x)| \, \ud\hs^{N-1} \geq   - \int_{\{V^* >t\}} \Delta  \tilde V(x)  \ud x
= - \int_{\{V >t\}} \Delta   V(x)  \ud x 
\\
= \int_{\partial \{V>t\}} |\nabla V(x)| \, \ud\hs^{N-1} \ge \int_{\partial \{V^*> t\}} |\nabla V^*(x)| \, \ud\hs^{N-1}
\end{multline}
with strict inequalities whenever $\{V>t\}$ is not a ball.
%Moreover, whenever $u\neq u^*$ there exists $t$ such that the last inequality is strict.  
Since $\tilde V - V^*$ is radial and  
 in view of Lemma \ref{asi}  it  vanishes at infinity, we have
\begin{multline}
\tilde V(r) - V^*(r) = \int_{r}^{+\infty} \frac{\ud}{\ud s} \left(V^*(s) - \tilde V(s)\right) \ud s =\\ 
\int_{r}^{+\infty} \frac{1}{N \omega_N s^{N-1}}  \ud s \int_{\partial B_s} - |\nabla V^*(x)| + |\nabla \tilde V(x)|\ud x .
\end{multline}
The claim follows since the integrand is nonnegative, and it is strictly positive in a subset of positive measure of $(r,+\infty)$, for all  $r <  r_{t_{max}}$.

%The last statement can be easily obtained, by exploiting the explicit formula of $\tilde f$.
% with $0<r<r_{t_{max}}$.
%It easily follows that $\tilde u - u^* \ge 0$, with strict inequality whenever $u$ is not radial.

\end{proof}
Lemma \ref{rearr0} establishes that we can rearrange the mass of $f_1$ in order to obtain a new radial charge configuration $\tilde f_1$, increasing the corresponding potential. Exploiting such a result, we deduce that the minimizer of  $\E_K^{0,0}$ has radial symmetry. This is done in the next theorem.

\begin{theorem}\label{00}
For $m_2\ge m_1$,  the (unique up to a translation) minimizer of $\E_{K_{C_N}}^{0,0}$ in $\Am$ is given by the pair $(f_1,f_2)$ where
$$
\textstyle f_1:= \frac12 \chi_{B^{2m_1}}, \qquad  f_2:= \chi_{B^{m_1+m_2}}-\frac12 \chi_{B^{2m_1}}.
$$
%where $r:= \left( \frac{2 m_1}{\omega_N}\right)^{\frac1N}$, $R:= \left(\frac{m_1+m_2}{\omega_N}\right)^{\frac1N}$.
\end{theorem}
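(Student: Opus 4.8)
The plan is to exploit the symmetry of the functional together with the rearrangement Lemma \ref{rearr0} to show that \emph{every} minimizer is radial, and then to read off its explicit shape from the first variation. Since $c_{11}=c_{22}=0$, the energy $\Eab(f_1,f_2)=-2J_K(f_1,f_2)=-2\int_{\R^N} f_2\,V_1\,\ud x$ (with $V_1:=K_{C_N}\ast f_1$) is symmetric under exchanging $f_1$ and $f_2$, so there is no loss of generality in assuming $m_2\ge m_1$. Minimizing $\Eab$ amounts to maximizing the cross interaction $\int f_2\,V_1$.

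The heart of the argument is a single rearrangement step applied to an arbitrary minimizer $(f_1,f_2)$. First I would apply Lemma \ref{rearr0} to $f:=f_1$, obtaining a radial charge $\tilde f_1$ whose potential $\tilde V_1:=K_{C_N}\ast\tilde f_1$ satisfies $\tilde V_1\ge V_1^*$, strictly on $B_{r(t_{max})}$. Next I would rearrange $f_2$ by the same geometric recipe \eqref{efti}, but relative to the superlevels of $V_1$ rather than of $K_{C_N}\ast f_2$; this produces a radial $\tilde f_2$ for which the mass identity \eqref{nosce0} reads $\int_{\{V_1^*>t\}}\tilde f_2=\int_{\{V_1>t\}}f_2$ for all $t$. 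The crucial observation is that, by construction, $\tilde f_1(x)$ and $\tilde f_2(x)$ are the averages of $f_1$ and $f_2$ over the same level shell $\{V_1\in(t,t+\ud t)\}$; hence $\tilde f_1+\tilde f_2$ is an average of $f_1+f_2\le 1$ and still satisfies the pointwise constraint, while $\|\tilde f_i\|_1=m_i$ by \eqref{nosce}. Thus $(\tilde f_1,\tilde f_2)\in\Am$.

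To compare energies I would use the layer-cake formula $\int f_2\,V_1=\int_0^{+\infty}\big(\int_{\{V_1>t\}}f_2\big)\,\ud t$, valid for $N\ge 3$ where $V_1>0$ (for $N=2$ one argues instead with $V_1-V_1^*$, which vanishes at infinity by Lemma \ref{asi}, since $V_1\to-\infty$). Using $\{\tilde V_1>t\}\supseteq\{V_1^*>t\}$ together with the mass identity, $\int_{\{\tilde V_1>t\}}\tilde f_2\ge\int_{\{V_1^*>t\}}\tilde f_2=\int_{\{V_1>t\}}f_2$, and integrating in $t$ yields $\int\tilde f_2\,\tilde V_1\ge\int f_2\,V_1$, i.e. $\Eab(\tilde f_1,\tilde f_2)\le\Eab(f_1,f_2)$. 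Since $(f_1,f_2)$ is a minimizer, equality must hold throughout, and the strict part of Lemma \ref{rearr0} forces the superlevels $\{V_1>t\}$ to be balls; hence $V_1$, and therefore $f_1$ and the whole configuration, are radial up to a translation.

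It then remains to identify the radial minimizer. Here I would invoke Proposition \ref{armonicofond}(iv), so that $f_1=f_2=\tfrac12$ on the mixing region $G_1\cap G_2$ (as $a_1=a_2=\tfrac12$), together with Lemma \ref{solounaomogenea} and the assumption $m_2\ge m_1$, which single out the larger phase $f_2$ as the one that also fills a pure region. A radial configuration of this type must be $f_1=\tfrac12\chi_{B^{2m_1}}$ and $f_2=\chi_{B^{m_1+m_2}}-\tfrac12\chi_{B^{2m_1}}$, the radii being fixed by $|B^{2m_1}|=2m_1$ and $|B^{m_1+m_2}|=m_1+m_2$; this also gives uniqueness up to translation. \textbf{The main obstacle} I anticipate is making the rearrangement step fully rigorous: one must handle the dimension-dependent sign and decay of the Coulomb potential (the case $N=2$ through $V_1-V_1^*$), treat carefully the superlevels $\{V_1>t\}$ of infinite measure as $t\downarrow 0$ when $N\ge 3$, and, most delicately, turn the equality case of the layer-cake estimate into genuine radial symmetry of $f_1$ on the support of $f_2$ — which is exactly what the strict inequality of Lemma \ref{rearr0} is designed to provide.
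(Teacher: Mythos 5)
Your overall strategy coincides with the paper's own proof: rearrange $f_1$ via Lemma \ref{rearr0}, compare energies along the level sets of $V_1$, use minimality to force equality, and then identify the radial minimizer via Proposition \ref{armonicofond}, Lemma \ref{solounaomogenea} and Lemma \ref{corsuper}. Your competitor is a nice variant, though: you rearrange $f_2$ by the recipe \eqref{efti} relative to the superlevels of $V_1$, so that admissibility of $(\tilde f_1,\tilde f_2)$ follows from linearity of \eqref{efti} and the $L^\infty$ bound in \eqref{nosce}, and the inequality $\E_{K_{C_N}}^{0,0}(\tilde f_1,\tilde f_2)\le\E_{K_{C_N}}^{0,0}(f_1,f_2)$ drops out of the layer-cake formula and \eqref{nosce0}. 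The paper instead takes $\tilde f_2=(1-\tilde f_1)\chi_{\{\tilde V_1>\tilde t\}}$ and must first prove that the threshold $\tilde t$ exists, which it does by a separate contradiction argument invoking Lemma \ref{corsuper} and Proposition \ref{solounaomogenea}; your construction avoids that step, which is a genuine simplification of the first half of the proof.

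There is, however, a gap in your rigidity step. Equality in your chain does \emph{not} by itself "force the superlevels $\{V_1>t\}$ to be balls": what it gives is $\int_{\R^N}\tilde f_2\,(\tilde V_1-V_1^*)\ud x=0$, hence $\tilde f_2=0$ a.e. on $\{\tilde V_1>V_1^*\}$. If some superlevel of $V_1$ were not a ball, Lemma \ref{rearr0} says this set contains $B_{r(t_{max})}$, so the equality only tells you that $\tilde f_2$ vanishes there, i.e. (by \eqref{nosce0}) that $f_2=0$ a.e. on $\{V_1>t_{max}\}$. This is not yet absurd: you must exclude a minimizer whose second phase avoids the region where $V_1$ is largest. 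Ruling this out needs exactly the structural lemmas you postpone to the end. Indeed, by Lemma \ref{corsuper} (the version with ${c_{22}}=0$, i.e. $f_2$ fills the superlevels of $V_1$), such a configuration forces $f_1=1$ a.e. on a top superlevel set of $V_1$, so $|F_1|>0$; then the last claim of Lemma \ref{solounaomogenea} gives $|G_2\setminus G_1|=|F_2|=0$, Proposition \ref{armonicofond}(iv) gives $f_2=\frac12\chi_{G_1\cap G_2}$, and counting masses yields $m_1\ge\frac12|G_1\cap G_2|+|F_1|=m_2+|F_1|>m_2$, contradicting $m_2\ge m_1$. (A degenerate case in which $f_2$ sits on a level set $\{V_1=\mathrm{const}\}$ of positive measure must also be excluded; there $f_1=-\Delta V_1=0$ a.e., so the supports are disjoint and the same lemmas give a contradiction.) Note that this is precisely where the hypothesis $m_2\ge m_1$ and the choice of rearranging the \emph{smaller} phase enter the symmetry argument; in your sketch they appear only in the final identification, so the passage from "equality" to "radial $f_1$" is incomplete as written.
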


\begin{proof}
Let $(f_1,f_2)$ be a minimizer of $\mathcal{E}_{K_{C_N}}^{0,0}$ in $\Am$. Let $V_1$ be the potential generated by $f_1$ and  let $\tilde f_1$ and $\tilde V_1$ be defined according to Lemma \ref{rearr0}. Notice that $0\le \tilde f_1 \le 1$, and that $\|\tilde f_1\|_{L^1(\R^N)} = m_1$. Let us observe that by standard regularity theory, $\tilde V_{1}$ attains a maximum. We denote it by $\tilde M_{1}$. We first show that there exists $\tilde t<\tilde M_{1}$ such that
\begin{equation}\label{ovvia}
\int_{\{\tilde V_1>\tilde t\}} (1-\tilde f_1(x)) \ud x = m_2.
\end{equation}
Suppose by contradiction that there does not exist $\tilde t$ such that \eqref{ovvia} holds true. Notice that $-\Delta \tilde V = \tilde f$ and that $\tilde f$ and $\tilde V$  are radially symmetric. Therefore, $\tilde V$ may have a flat region only in a ball centered at the origin, whereas it is strictly decreasing with respect to $|x|$ elsewhere. We deduce that   $\int_{\{\tilde V_1> t\}} (1-\tilde f_1(x)) \ud x > m_2$ for any $t< \tilde M_{1}$, and  in particular that $\int_{\{\tilde V_1=\tilde M_{1}\}} (1-\tilde f_1(x)) \ud x \ge m_2$. It follows that $|\{\tilde V_{1}=\tilde M_{1}\}|\ge m_{2}$ and, since $\tilde V_{1}$ is radially symmetric, $\{\tilde V_{1}=\tilde M_{1}\}$ is a ball centered at the origin containing  $B^{m_2}$. Set $\hat f_{2}:=\chi_{B^{m_{2}}}$; we have that $(\tilde f_{1},\hat f_{2})\in\Am$, and by Lemma \ref{corsuper}
$(\tilde f_{1},\hat f_{2})$ is a minimizer. 
% and 
%\begin{multline}\label{assurdo}
%{\mathcal E}^{0,0}_{K_{C_{N}}}(f_{1},f_{2})\le {\mathcal E}^{0,0}_{K_{C_{N}}}(\tilde f_{1},\hat f_{2})=-2\,m_{2}\,\tilde M_{1}=-2\int_{\R^{N}}f_{2}(x)\,\tilde M_{1}\ud x\\
%\leq -2\int_{\R^{N}}f_{2}(x)\,\max_{\R^N} V_{1}\ud x \leq -2\int_{\R^{N}}f_{2}(x)\, V_{1}(x)\ud x={\mathcal E}^{0,0}_{K_{C_{N}}}(f_{1},f_{2}),
%\end{multline}
%where in the last inequality we have used that $\max_{\R^N} V^{*}_{1}=\max_{\R^N} V_{1}$ and that, $\tilde M_{1}\geq \max_{\R^N} V^{*}_{1}$ by Lemma \ref{rearr0}.
%By \eqref{assurdo}, we get that $(\tilde f_{1},\hat f_{2})$ is actually a minimizer of ${\mathcal E}^{0,0}_{K_{C_{N}}}$ in $\Am$ and 
Notice that  
%the sets $\{0<\tilde f_1\le 1,\hat f_2=0\}=\{0<\tilde f_1\le 1\}$ and $\{\hat f_2=1\}$ have both positive measure. 
the supports of $\tilde f_1$ and $\hat f_2$ are disjoint, but this is in contradiction with Proposition \ref{solounaomogenea}.
We conclude  that  there exists $\tilde t$ satisfying \eqref{ovvia}. 
Set 
$$
\tilde f_2(x) := 
\begin{cases}
1- \tilde f_1(x) & \text{ for } x\in \{\tilde V_1>\tilde t\}
\\
0 & \text{ otherwise.}
\end{cases};
$$   
by construction $(\tilde f_1,\tilde f_2)\in  \A_{m_1,m_2}$ ($\int_{\R^N}\tilde f_2(x)\ud x=m_2$ by \eqref{ovvia}).

Let now $\hat t\le \tilde t$ be such that 
$$
\{ V^*_1>\hat t\}\subseteq \{\tilde V_1>\tilde t\}
\subseteq \{ V^*_1\ge \hat t\}\,. 
$$
This is possible since the superlevel set
 $\{\tilde V_1>\tilde t\}$ is a ball centered at the origin.
Let $A:=\{\tilde V_1>\tilde t\}\setminus \{ V^*_1>\hat t\}$. Since 
$A\subseteq \{V_1^* = \hat t \}$, we have 
$\tilde f_1=0$ a.e. on $A$, and hence 
$\tilde f_2=1$ a.e. on $A$. Moreover, by Corollary \ref{concentra} we can always assume that 
\begin{equation}\label{Ap}
\text{ supp } f_2=  \{ V_1\ge \hat t\} \cup A', \quad f_2 = 1-f_1 \text{ on } \{V_1>\hat t\},  \quad  f_2\equiv 1 \text{ on } A',
\end{equation}
for some  set $A' =   \{ V_1\ge \hat t\}$ with 
$|A'|=|A|$.
By the Coarea Formula and Lemma \ref{rearr0} we  have 
\begin{eqnarray}\nonumber
\E_{K_{C_N}}^{0,0} (\tilde f_1,\tilde f_2) 
&=& -2 \int_{\R^N} \tilde f_2(x) \tilde V_1(x)\ud x\\ \label{primadisug}
&\le&  -2 \int_{\R^N} \tilde f_2(x)  V^*_1(x)\ud x \\ \nonumber
&=&-2\,\hat t\, |A|
-2\int_{\hat t}^{+\infty} t\int_{\{V_1^*>t\}}(1-\tilde f_1(x))\ud x\ud t
\\ \nonumber
&=&-2\,\hat t\,|A|-2\int_{\hat t}^{+\infty} t\int_{\{V_1>t\}}(1- f_1(x))\ud x\ud t
\\ \nonumber
&=&-2\,\hat t\,|A|-2\int_{\{V_1>\hat t\}}(1-f_1(x))V_1(x)\ud x\\
\label{spec}
&= & -2\int_{A'} f_2(x) \, \hat t \ud x -2\int_{\{V_1>\hat t\}}(1-f_1(x))V_1(x)\ud x
\\
\label{esp}
&=& -2\int_{\R^N} f_2(x)V_1(x)\ud x
=\E_{K_{C_N}}^{0,0} (f_1,f_2),
\end{eqnarray}
where the  equality in \eqref{spec} follows from \eqref{Ap}. 
%Corollary \ref{concentra} and the fact that supp $f_2 = A'\cup \{V_1>\hat t\}$. 
%and the fact that
%\begin{eqnarray*}
%&&|A|+\int_{\{V_1>\hat t\}} (1-f_1(x))\ud x = 
%|A|+\int_{\{V^*_1>\hat t\}} (1-\tilde f_1(x))\ud x 
%\\
%&&
%=\int_{\{\tilde V_1>\tilde t\}} (1-\tilde f_1(x))\ud x = m_2\,.
%\end{eqnarray*}
By minimality, the inequality in   \eqref{primadisug} is actually an equality, and hence  $V_1^*\equiv \tilde V_1$. It follows that 
all the superlevels of $V_1$ are balls. 
By Proposition \ref{armonicofond}, $f_1 = \frac 12$ in $G_1\cap G_2$, whereas by Lemma \ref{solounaomogenea} $G_1\cup F_1 = G_1\cap G_2$, so that  
$f_1:= \frac12 \chi_{E}$ for some set $E$. Since all the 
superlevel sets of $V_1$ are balls, we conclude that, up to a translation, $f_1:= \frac12 \chi_{B^{2m_1}}$. By Corollary \ref{corsuper} we also deduce that  $f_2:= \chi_{B^{m_1+m_2}}-\frac12 \chi_{B^{2m_1}}$ and this concludes the proof.

%$\tilde f_2$ be defined such that 
%\begin{itemize}
%\item[(i)]$\int_{\R^N} \tilde f_2(x) \ud x = m_2$;
%\item[(ii)] $\tilde f_2(x)= 1- \tilde f_1(x)$ if  $\tilde V_1(x)> \tilde t$;
%\item[(iii)] $\tilde f_2(x) = 0$ if   $\tilde V_1(x) < \tilde t$.
%\end{itemize}
%Arguing as in the proof of Corollary \ref{corsuper} we deduce that $(\tilde f_1, \tilde f_2)$ is a solution 
\end{proof}

%\subsection{The weakly attractive case {${c_{11}} +  {c_{22}} > -2$}: The case $N\ge 2$}

\begin{corollary}\label{triang}
Let ${c_{11}}\le -1$ and $-1	\le c_{22}\le  0$ (resp. ${c_{22}}\le -1$ and $-1\le {c_{11}}\le 0$)  with ${c_{11}}+{c_{22}}>-2$. Then, the  
 (unique up to a translation) minimizer of $\mathcal E^{{c_{11}},{c_{22}}}_{K_{C_N}}$ in $\Am$ is given by the pair 
$$
(f_1,f_2)=(\chi_{B^{m_1}},\chi_{B^{m_1+m_2}\setminus {B^{m_1}}})\quad\text{(resp. }(f_1,f_2)=(\chi_{B^{m_1+m_2}\setminus{B^{m_2}}},\chi_{B^{m_2}}).
$$

\end{corollary}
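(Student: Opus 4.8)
The plan is to treat only the first scenario $c_{11}\le -1\le c_{22}\le 0$ with $c_{11}+c_{22}>-2$ (so $-1<c_{22}\le 0$), the other being obtained by exchanging the roles of the two phases. First I would reduce to the borderline coefficient $c_{11}=-1$, exactly as in Proposition \ref{triangolino1d}. Writing
\[
\mathcal E^{c_{11},c_{22}}_{K_{C_N}}(f_1,f_2)=\mathcal E^{-1,c_{22}}_{K_{C_N}}(f_1,f_2)+(c_{11}+1)\,J_{K_{C_N}}(f_1,f_1),
\]
and using $c_{11}+1\le 0$, Lemma \ref{riesz} gives that the extra summand is minimized, at fixed mass $m_1$, exactly by $f_1=\chi_{B^{m_1}}$. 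Hence, once I show that the minimizer at $c_{11}=-1$ has $f_1$ equal to a ball (and the total density equal to a ball), the same pair minimizes both summands simultaneously: for $c_{11}<-1$ every competitor satisfies $\mathcal E^{c_{11},c_{22}}_{K_{C_N}}\ge \min\mathcal E^{-1,c_{22}}_{K_{C_N}}+(c_{11}+1)\,J_{K_{C_N}}(\chi_{B^{m_1}},\chi_{B^{m_1}})$, with equality forcing both $f_1=\chi_{B^{m_1}}$ and optimality for $\mathcal E^{-1,c_{22}}_{K_{C_N}}$, which yields uniqueness as well.

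It then remains to analyze $c_{11}=-1$. I would use the decomposition
\[
\mathcal E^{-1,c_{22}}_{K_{C_N}}(f_1,f_2)=-J_{K_{C_N}}(\rho,\rho)+(c_{22}+1)\,J_{K_{C_N}}(f_2,f_2),\qquad \rho:=f_1+f_2,
\]
together with Proposition \ref{armonicofond}: since $c_{11}=-1\neq c_{22}$ and $c_{11}\neq 0$, one has $|G_1\cap G_2|=|G_1\setminus G_2|=0$, so $f_1=\chi_{F_1}$, and if moreover $c_{22}<0$ then also $|G_2\setminus G_1|=0$, giving $f_2=\chi_{F_2}$ with $F_1\cap F_2=\emptyset$. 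The two summands pull in opposite directions: by Lemma \ref{riesz} the first is minimized by $\rho=\chi_{B^{m_1+m_2}}$ (recall $\rho\le 1$), whereas the second, having nonnegative coefficient, is minimized by spreading $f_2$ so as to lower its Coulomb self-energy. The claim is that the compromise is the nested pair $f_1=\chi_{B^{m_1}}$, $f_2=\chi_{B^{m_1+m_2}\setminus B^{m_1}}$.

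To make this rigorous I would first prove that the total density is a ball and then solve the confined one-phase problem inside it, exploiting Coulomb potential theory as in Theorem \ref{00}. Inequality \eqref{ineq} at $c_{11}=-1$ reads $V_2(x_1)\ge V_2(x_2)$ for $x_1\in\overline{F_1}$ and $x_2\in\overline{F_2}$; since $V_2=K_{C_N}\ast f_2$ is superharmonic, harmonic off $\overline{F_2}$, and attains its global maximum on $\overline{F_2}$, this forces $V_2$ to be constant on $F_1$, hence (by real-analyticity of harmonic functions) constant on the whole bounded complementary component enclosing $F_1$. Combined with the stationarity relation $V_1+V_2=\mathrm{const}$ on $\partial F_1$ from \eqref{eqqu} and with Lemma \ref{solounaomogenea} (which forbids disjoint supports of the two phases), this pins down $\rho$ as a ball with the enclosed region a concentric ball. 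Once $\rho=\chi_{B^{m_1+m_2}}$ is known the first summand is constant, and I am left to minimize $(c_{22}+1)J_{K_{C_N}}(f_2,f_2)$ over $f_2\le\chi_{B^{m_1+m_2}}$ of mass $m_2$; the Euler–Lagrange condition forces $f_2\in\{0,1\}$ concentrated where the potential is lowest, i.e. the concentric outer annulus, whence $f_1=\chi_{B^{m_1}}$. The case $c_{22}=0$, in which $f_2$ may carry a mixing part $G_2$, is reduced to this one by Corollary \ref{concentra}, which purifies that phase without raising the energy.

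The main obstacle is precisely the competition in the two-term functional: the symmetric–decreasing rearrangement that lowers $-J_{K_{C_N}}(\rho,\rho)$ simultaneously raises $J_{K_{C_N}}(f_2,f_2)$, so no single rearrangement settles both terms, and one cannot bound the self-energy term from below by simply discarding it. The delicate point is therefore the rigidity step forcing concentric balls — either through the overdetermined boundary information $V_2\equiv\mathrm{const}$, $\nabla V_2\equiv 0$ on $F_1$ (a Newton/Serrin-type conclusion) or through a suitable adaptation of the Talenti-type rearrangement of Lemma \ref{rearr0} to the coupled functional — after which the confined self-energy minimization and the uniqueness of the minimizer follow by standard rearrangement arguments.
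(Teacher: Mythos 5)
Your reduction to the borderline case $c_{11}=-1$ (splitting off $(c_{11}+1)J_{K_{C_N}}(f_1,f_1)$, using Lemma \ref{riesz} and the fact that the claimed pair minimizes both summands simultaneously) is sound, and your potential-theoretic start for $c_{11}=-1$ — deducing from \eqref{ineq} and the maximum principle that $V_2$ is constant on $F_1$ — is exactly the paper's argument in dimension one (Proposition \ref{triangolino1d}). The genuine gap is the step you gloss as ``this pins down $\rho$ as a ball with the enclosed region a concentric ball.'' In dimension one this follows from the explicit formula $V_2'(x)=\frac12\left(|F_2\cap(-\infty,x]|-|F_2\cap[x,\infty)|\right)$; in dimension $N\ge 2$, the assertion ``if the Newtonian potential of $\chi_{F_2}$ is constant on the cavity, then $F_2$ is a spherical shell and the cavity a concentric ball'' is a nontrivial symmetry theorem for an overdetermined problem (a converse to Newton's theorem, of Dive/Serrin type) which you do not prove and which appears nowhere in the paper. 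You acknowledge this yourself, offering it only as an alternative to ``a suitable adaptation of Lemma \ref{rearr0} to the coupled functional,'' so the hardest step of the proof is missing. Note also that your final confined-problem step (minimizing $(c_{22}+1)J_{K_{C_N}}(f_2,f_2)$ inside the ball) presupposes that $\rho=\chi_{B^{m_1+m_2}}$ has already been established, so it cannot repair this hole.

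The paper dissolves the competition you identify as the main obstacle by a linear change of unknowns rather than by rigidity: setting $g_1:=f_1/2$ and $g_2:=f_1/2+f_2$, so that $(g_1,g_2)\in\mathcal A_{\tilde m_1,\tilde m_2}$ with $\tilde m_1=m_1/2$, $\tilde m_2=m_1/2+m_2$, a direct computation gives
\[
\mathcal E^{c_{11},c_{22}}_{K_{C_N}}(f_1,f_2)=4(c_{11}+1)J_{K_{C_N}}(g_1,g_1)+c_{22}\,J_{K_{C_N}}(g_1+g_2,g_1+g_2)+2(1+c_{22})\,\mathcal E^{0,0}_{K_{C_N}}(g_1,g_2).
\]
Since $4(c_{11}+1)\le 0$, $c_{22}\le 0$ and $2(1+c_{22})>0$ (note $c_{22}>-1$ is forced by $c_{11}\le-1$ and $c_{11}+c_{22}>-2$), all three addenda are minimized, under the constraints inherited from $\Am$ (namely $g_1\le\frac12$ and $g_1+g_2\le 1$), by the \emph{single} configuration $g_1=\frac12\chi_{B^{2\tilde m_1}}$, $g_1+g_2=\chi_{B^{\tilde m_1+\tilde m_2}}$: the first two by Lemma \ref{riesz}, the third by Theorem \ref{00}, whose uniqueness statement also yields uniqueness here. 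In these variables there is no competition between terms, and all the symmetrization work is already encapsulated in Theorem \ref{00} (i.e., in Lemma \ref{rearr0}); no overdetermined-problem symmetry result is needed. If you want to salvage your outline, the missing rigidity step is precisely what you would have to prove, and it is substantially harder than the substitution argument.
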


\vskip -0.2cm
\begin{figure}[h]
\centering
{\def\svgwidth{250pt}
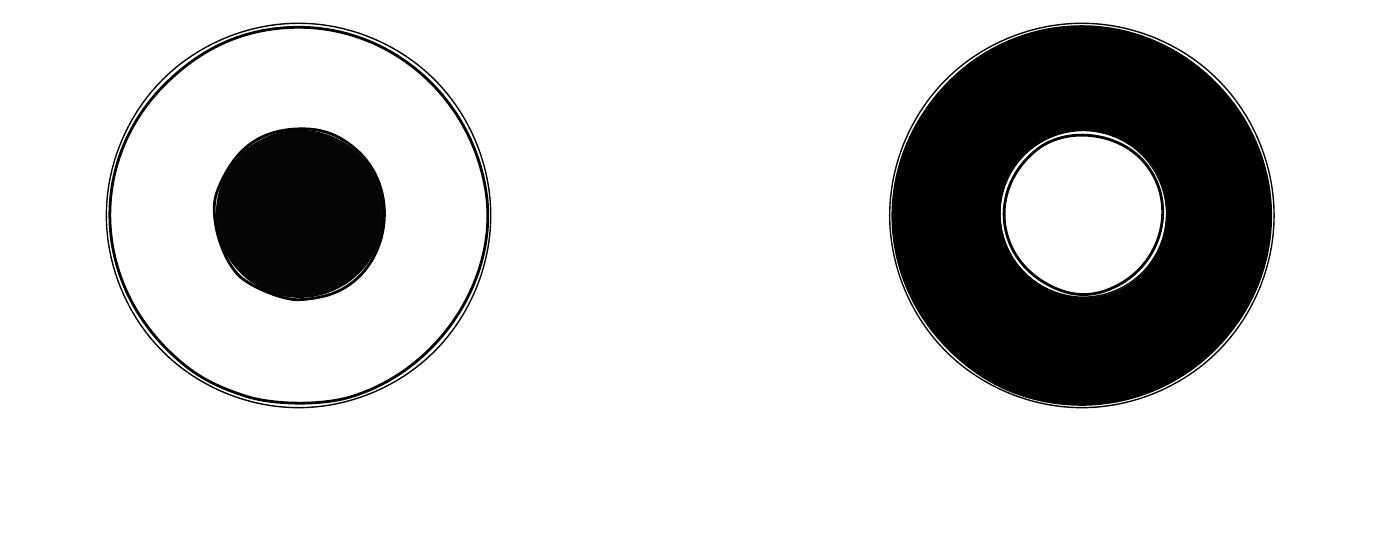}
\vskip -0.7cm
\caption{The phase $f_1$ is black and the phase $f_2$ is white. The minimizer in the case ${c_{11}}\le -1\le c_{22}\le  0$ is represented on the left, whereas on the right there is the minimizer in the case ${c_{22}}\le -1\le {c_{11}}\le 0$.}
\end{figure}

\begin{proof}
We prove the claim only for ${c_{11}}\le-1$ and $-1\le c_{22}\le  0$ with ${c_{11}}+{c_{22}}>-2$,  the proof of the other case being fully analogous.
Let $(f_1,f_2)$ be a minimizer of $\E_{K_{C_N}}^{c_{11},c_{22}}$ in $\Am$. 

Set $\tilde m_1:=\frac{m_1}{2}$, $\tilde m_2:=\frac{m_1}{2}+m_2>\tilde m_1$, 
\begin{equation}\label{sostituz}
g_1:=\frac{f_1}{2},\qquad g_2:=\frac{f_1}{2}+f_2.
\end{equation}
It is easy to see that $g_i\ge 0$,  $\int_{\R^N}g_i(x)\ud x=\tilde m_i$ (for $i=1,2$) and $g_1+g_2=f_1+f_2\le 1$, so that $(g_1,g_2)\in \mathcal A_{\tilde m_1,\tilde m_2}$. A straightforward computation yields 
\begin{eqnarray}\nonumber
\mathcal E^{{c_{11}},{c_{22}}}_{K_{C_N}}(f_1,f_2)&=&({c_{11}}+1) J_{K_{C_N}}(f_1,f_1)+\mathcal E_{K_{C_N}}^{-1,{c_{22}}}(f_1,f_2)\\ 
\nonumber
&=&({c_{11}}+1) J_{K_{C_N}}(f_1,f_1)
+{c_{22}} J_{K_{C_N}}(f_1+f_2,f_1+f_2)
\\ \nonumber
&&+(1+{c_{22}})(-J_{K_{C_N}}(f_1,f_1)-2J_{K_{C_N}}(f_1,f_2))\\ \label{ultimofunz} 
&=&4\,({c_{11}}+1)J_{K_{C_N}}(g_1,g_1)+{c_{22}} J_{K_{C_N}}(g_1+g_2,g_1+g_2)\\ \nonumber
&&+2(1+{c_{22}})\,
\mathcal{E}_{K_{C_N}}^{0,0}(\textstyle g_1,g_2),
\end{eqnarray}
and hence $(f_1,f_2)$ is a minimizer of $\mathcal E_{K_{C_N}}^{{c_{11}},c_{22}}$ in $\Am$ if and only if $(g_1,g_2)$  minimizes the 
energy 
\begin{equation}\label{newen}
4\,({c_{11}}+1)J_{K_{C_N}}(g_1,g_1)+{c_{22}} J_{K_{C_N}}(g_1+g_2,g_1+g_2)+2(1+{c_{22}})\,
\mathcal{E}_{K_{C_N}}^{0,0}(\textstyle g_1,g_2)
\end{equation}
in $ \mathcal A_{\tilde m_1,\tilde m_2}$.
By Theorem \ref{00}, 
 the third addendum in \eqref{newen} is minimized (in $\mathcal A_{\tilde m_1,\tilde m_2}$) if and only if 
 $$
 (g_1,g_2)=(\textstyle \frac 1 2\chi_{B^{2\tilde m_1}},\frac 1 2 \chi_{B^{2\tilde m_1}}+\chi_{B^{\tilde m_1+\tilde m_2}\setminus B^{2\tilde m_1}}).
 $$
We notice that such configuration minimizes also the first and the second addendum. The claim follows directly by \eqref{sostituz}.
 \end{proof}
 
Quantitative Riesz inequalities have been recently studied in  \cite[Theorem 1.5]{CM}.  For any measurable set $E\subset \R^N$ with finite measure, let $E^*:= B^{|E|}$ be  the ball centered at the origin such that 
$|E^*|=|E|$.
 From Corollary \ref{triang} with ${c_{11}}=-1$ and ${c_{22}}=0$
we immediately get the following improved Riesz inequality.

\begin{corollary}\label{improvedRiesz}
For any measurable sets $E_1\subseteq E_2\subset \R^N$  with finite measure, there holds
\begin{equation}\label{impri1}
J_{K_{C_N}}(E_1^*,E_2^*)-J_{K_{C_N}}(E_1,E_2)\ge \frac 12\left( 
J_{K_{C_N}}(E_1^*,E_1^*)-J_{K_{C_N}}(E_1,E_1)\right).
\end{equation}
Moreover, for any measurable sets $A_1\subseteq A_2\subset \R^N$  with finite measure, there holds
\begin{multline}\label{impri2}
J_{K_{C_N}}(A_2,A_2) - J_{K_{C_N}}(A_1,A_1) \le J_{K_{C_N}}(B^{|A_2|},B^{|A_2|}) 
\\
- J_{K_{C_N}}(B^{|A_2|}\setminus B^{|A_2| - |A_1| },B^{|A_2|}\setminus  B^{|A_2| - |A_1|}). 
\end{multline}

\end{corollary}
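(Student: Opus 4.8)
The plan is to read both inequalities off directly from the minimality of the configuration identified in Corollary \ref{triang}, specialized to $c_{11}=-1$, $c_{22}=0$ (in dimension one one invokes instead Proposition \ref{triangolino1d}, which gives the same minimizer). For this choice the energy \eqref{energy} reads
\[
\E_{K_{C_N}}^{-1,0}(f_1,f_2)=-J_{K_{C_N}}(f_1,f_1)-2\,J_{K_{C_N}}(f_1,f_2),
\]
and the (unique up to translation) minimizer in $\Am$ is $(\chi_{B^{m_1}},\chi_{B^{m_1+m_2}\setminus B^{m_1}})$. The whole argument amounts to inserting a well chosen admissible pair, simplifying the resulting energy into a difference of the $J_{K_{C_N}}$-terms appearing in the claim, and comparing with the value at the minimizer; the only nontrivial input is Corollary \ref{triang} itself.

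For \eqref{impri1} I would test with $(f_1,f_2):=(\chi_{E_1},\chi_{E_2\setminus E_1})$, which lies in $\Am$ with $m_1=|E_1|$ and $m_2=|E_2|-|E_1|$, since the two densities are characteristic functions of disjoint sets whose union is $E_2$. Using $J_{K_{C_N}}(E_1,E_2\setminus E_1)=J_{K_{C_N}}(E_1,E_2)-J_{K_{C_N}}(E_1,E_1)$, a one-line computation gives $\E_{K_{C_N}}^{-1,0}(\chi_{E_1},\chi_{E_2\setminus E_1})=J_{K_{C_N}}(E_1,E_1)-2\,J_{K_{C_N}}(E_1,E_2)$. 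Since $B^{m_1}=E_1^*$ and $B^{m_1+m_2}=E_2^*$, the identical computation at the minimizer yields $\E_{K_{C_N}}^{-1,0}(\chi_{E_1^*},\chi_{E_2^*\setminus E_1^*})=J_{K_{C_N}}(E_1^*,E_1^*)-2\,J_{K_{C_N}}(E_1^*,E_2^*)$. Minimality then reads
\[
J_{K_{C_N}}(E_1^*,E_1^*)-2\,J_{K_{C_N}}(E_1^*,E_2^*)\le J_{K_{C_N}}(E_1,E_1)-2\,J_{K_{C_N}}(E_1,E_2),
\]
and rearranging and dividing by $2$ is precisely \eqref{impri1}.

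For \eqref{impri2} I would instead test with $(f_1,f_2):=(\chi_{A_2\setminus A_1},\chi_{A_1})\in\Am$, now with $m_1=|A_2|-|A_1|$ and $m_2=|A_1|$. Expanding $J_{K_{C_N}}(A_2,A_2)$ along the disjoint decomposition $A_2=A_1\cup(A_2\setminus A_1)$ gives $J_{K_{C_N}}(A_2\setminus A_1,A_2\setminus A_1)+2\,J_{K_{C_N}}(A_1,A_2\setminus A_1)=J_{K_{C_N}}(A_2,A_2)-J_{K_{C_N}}(A_1,A_1)$, whence $\E_{K_{C_N}}^{-1,0}(\chi_{A_2\setminus A_1},\chi_{A_1})=-\bigl(J_{K_{C_N}}(A_2,A_2)-J_{K_{C_N}}(A_1,A_1)\bigr)$. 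The minimizer here is $(\chi_{B^{|A_2|-|A_1|}},\chi_{B^{|A_2|}\setminus B^{|A_2|-|A_1|}})$; viewing it as the admissible pair associated with the nested sets $B^{|A_2|}\setminus B^{|A_2|-|A_1|}\subset B^{|A_2|}$ (the inner ball $B^{|A_2|-|A_1|}$ being the set difference) and repeating the same expansion gives $\E_{K_{C_N}}^{-1,0}(\chi_{B^{|A_2|-|A_1|}},\chi_{B^{|A_2|}\setminus B^{|A_2|-|A_1|}})=-\bigl(J_{K_{C_N}}(B^{|A_2|},B^{|A_2|})-J_{K_{C_N}}(B^{|A_2|}\setminus B^{|A_2|-|A_1|},B^{|A_2|}\setminus B^{|A_2|-|A_1|})\bigr)$. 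Comparing the two energies via minimality and flipping the sign yields exactly \eqref{impri2}.

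The argument is essentially bookkeeping, so I do not expect a genuine obstacle: the one point requiring care is the admissibility constraint $f_1+f_2\le 1$, which is automatic here since in both cases the two test densities are characteristic functions of disjoint sets, and the degenerate masses $|E_1|\in\{0,|E_2|\}$ (resp. $|A_1|\in\{0,|A_2|\}$), where both inequalities hold trivially and must be treated separately. All the analytic content is contained in Corollary \ref{triang}.
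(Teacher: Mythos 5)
Your proof is correct and follows essentially the same route as the paper: both test the pair $(\chi_{E_1},\chi_{E_2\setminus E_1})$ in the energy $\E^{-1,0}_{K_{C_N}}$ and invoke the minimality of the ball-plus-annulus configuration from Corollary \ref{triang} (Proposition \ref{triangolino1d} for $N=1$). The only cosmetic difference is that the paper dispatches \eqref{impri2} by observing it is \eqref{impri1} with $E_1=A_2\setminus A_1$, $E_2=A_2$, whereas you redo the minimality comparison directly with the test pair $(\chi_{A_2\setminus A_1},\chi_{A_1})$ --- the same computation in different bookkeeping; your explicit handling of the degenerate masses $|E_1|\in\{0,|E_2|\}$ is a point of care the paper leaves implicit.
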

\begin{proof}
We only prove \eqref{impri1}, since \eqref{impri2} is indeed equivalent to \eqref{impri1} replacing $E_1$ with $A_2\setminus A_1$ and $E_2$ with $A_2$. 

Let $f_1:= \chi_{E_1}$, $f_2:=\chi_{E_2\setminus E_1}$. By Corollary \ref{triang} we have
\begin{multline*}
J_{K_{C_N}}(E_1,E_1) -2 J_{K_{C_N}}(E_1,E_2) = J_{K_{C_N}}(f_1,f_1) - 2J_{K_{C_N}}(f_1,f_1+f_2) 
\\
= \E^{-1,0}_{K_{C_N}}(f_1,f_2)
\ge \E^{-1,0}_{K_{C_N}}(\chi_{E_1^*},\chi_{E_2^*\setminus E_1^*}) = 
J_{K_{C_N}}(E_1^*,E_1^*)  -2 J_{K_{C_N}}(E_1^*,E_2^*).
\end{multline*}
\end{proof}
In the next corollary we will consider the case $-1< {c_{11}}\le 0$, $-1<{c_{22}}\le 0$, completing the analysis of the weakly attractive case for the Coulomb interaction kernel. 
Recall  the coefficients $a_i$  defined in \eqref{ai}.
\begin{corollary}\label{minimoinquadrato}
Let $-1< {c_{11}}\le 0$, $-1<{c_{22}}\le 0$.
%such that ${c_{11}}+{c_{22}}>-2$. 
The following results hold true.
\begin{itemize}
\item[(i)] If $({c_{22}}+1)m_2> ({c_{11}}+1)m_1$, then 
 the (unique up to a translation) minimizer of $\E^{{c_{11}},{c_{22}}}_{K_{C_N}}$ in $\Am$ is given by the pair 
\begin{equation*}
(f_1,f_2)=\Big(a_1\,\chi_{B^{\frac{m_1}{a_1}}},\chi_{B^{m_2+m_1}}-a_1\,\chi_{B^{\frac{m_1}{a_1}}}\Big).
\end{equation*}
\item[(ii)] If $({c_{11}}+1)m_1> ({c_{22}}+1)m_2$, then 
 the (unique up to a translation) minimizer of $\E^{{c_{11}},{c_{22}}}_{K_{C_N}}$ in $\Am$ is given by the pair 
\begin{equation*}
(f_1,f_2)=\Big(\chi_{B^{m_2+m_1}}-a_2\,\chi_{B^{\frac{m_2}{a_2}}},a_2\,\chi_{B^{\frac{m_2}{a_2}}}\Big).
\end{equation*}
\end{itemize}
\end{corollary}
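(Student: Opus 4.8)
The plan is to reproduce verbatim the argument of Corollary~\ref{quadr1d}, replacing only its one–dimensional input, Proposition~\ref{triangolino1d}, by its $N\ge 2$ counterpart, Corollary~\ref{triang} (which in turn rests on the rearrangement Theorem~\ref{00}). I prove only (i), since (ii) follows by interchanging the roles of the two phases and of $c_{11},c_{22}$. First I would fix a minimizer $(f_1,f_2)$ and extract its structure from the first–variation results of Section~\ref{seccoul}. By Proposition~\ref{armonicofond}(ii) together with Corollary~\ref{concentra} I may assume $|G_1\setminus G_2|=|G_2\setminus G_1|=0$, and by Lemma~\ref{solounaomogenea} at most one of $F_1,F_2$ has positive measure; the assumption $({c_{22}}+1)m_2>({c_{11}}+1)m_1$ will force $|F_1|=0$. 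Invoking Proposition~\ref{armonicofond}(iv) on the mixed region, I then obtain the normal form
\begin{equation*}
f_1=a_1\,\chi_{G_1\cap G_2},\qquad f_2=a_2\,\chi_{G_1\cap G_2}+\chi_{F_2}.
\end{equation*}

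Next I would perform the algebraic re-parameterization. Setting $A:=G_1\cap G_2$, $B:=F_2$, $\tilde m_1:=m_1/a_1$, $\tilde m_2:=m_2-\tfrac{{c_{11}}+1}{{c_{22}}+1}\,m_1$, and $\tilde c_{22}:={c_{22}}\,\tfrac{{c_{11}}+{c_{22}}+2}{1-{c_{11}}{c_{22}}}$, a direct computation identical to the one in Corollary~\ref{quadr1d} yields
\begin{equation*}
\mathcal E^{{c_{11}},{c_{22}}}_{K_{C_N}}(f_1,f_2)=\frac{1-{c_{11}}{c_{22}}}{{c_{11}}+{c_{22}}+2}\;\mathcal E^{-1,\tilde c_{22}}_{K_{C_N}}(\chi_A,\chi_B).
\end{equation*}
Here one checks that the multiplicative factor is strictly positive and that $\tilde c_{22}\in(-1,0)$, while the case-(i) hypothesis $({c_{22}}+1)m_2>({c_{11}}+1)m_1$ is exactly what guarantees $\tilde m_2>0$ (so that the pure phase $B$ is genuinely present). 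Consequently $(f_1,f_2)$ minimizes $\mathcal E^{{c_{11}},{c_{22}}}_{K_{C_N}}$ in $\Am$ if and only if $(\chi_A,\chi_B)$ minimizes $\mathcal E^{-1,\tilde c_{22}}_{K_{C_N}}$ among pairs of sets with $|A|=\tilde m_1$, $|B|=\tilde m_2$.

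I would then apply Corollary~\ref{triang} with coefficients $(-1,\tilde c_{22})$, which satisfy $-1\le -1$, $-1\le\tilde c_{22}\le 0$ and $-1+\tilde c_{22}>-2$; it gives the (unique up to translation) reduced minimizer $A=B^{\tilde m_1}$ and $B=B^{\tilde m_1+\tilde m_2}\setminus B^{\tilde m_1}$. Substituting back through $f_1=a_1\chi_A$, $f_2=a_2\chi_A+\chi_B$, and using $a_1+a_2=1$ to check that $f_1+f_2=\chi_{B^{m_1+m_2}}$ and $\int_{\R^N}f_1\,\ud x=a_1\,|B^{m_1/a_1}|=m_1$, I recover the asserted pair $\big(a_1\chi_{B^{m_1/a_1}},\,\chi_{B^{m_1+m_2}}-a_1\chi_{B^{m_1/a_1}}\big)$, with uniqueness inherited from Corollary~\ref{triang}.

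The substantive analytic content, namely the radial symmetry of the mixed ball, has already been absorbed into Theorem~\ref{00} and hence into Corollary~\ref{triang}; given these, the present statement is a pure change of variables. The only point deserving care is the bookkeeping of the reduction: verifying the energy identity above, confirming the sign of the prefactor and the range of $\tilde c_{22}$, and matching the threshold $({c_{22}}+1)m_2>({c_{11}}+1)m_1$ with the positivity $\tilde m_2>0$. This is where I expect whatever mild friction there is to lie, but it is entirely elementary algebra.
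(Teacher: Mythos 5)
Your reduction is, in its main case, exactly the paper's own argument: for $c_{22}<0$ the paper literally says to argue as in Corollary \ref{quadr1d}, applying Corollary \ref{triang} in place of Proposition \ref{triangolino1d}, and your normal form, the energy identity with prefactor $\frac{1-c_{11}c_{22}}{c_{11}+c_{22}+2}$, the checks $\tilde c_{22}\in(-1,0]$ and $\tilde m_1+\tilde m_2=m_1+m_2$, and the matching of the threshold with $\tilde m_2>0$ are all correct there. (A small remark: when $c_{11}=0$ and $c_{22}<0$, Proposition \ref{armonicofond}(ii) does not control $G_1\setminus G_2$ either, but Lemma \ref{solounaomogenea} combined with the mass hypothesis excludes the alternative $|G_2\setminus G_1|+|F_2|=0$, so every minimizer still has the normal form without invoking Corollary \ref{concentra}.)

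The genuine gap is the boundary case $c_{22}=0$, which your hypothesis $-1<c_{22}\le 0$ allows. There Proposition \ref{armonicofond}(ii) gives no information on $G_2\setminus G_1$: phase $2$ has no self-interaction, so a priori a minimizer may have $f_2$ genuinely mixed outside the support of $f_1$, and the normal form $f_2=a_2\chi_{G_1\cap G_2}+\chi_{F_2}$ fails. Your fix, ``by Corollary \ref{concentra} I may assume $|G_2\setminus G_1|=0$'', replaces the given minimizer by \emph{another} minimizer with that structure; this identifies the minimal energy and exhibits one minimizer, but it cannot yield the uniqueness assertion, which requires characterizing \emph{every} minimizer. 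This is precisely why the paper splits its proof in two: for $c_{22}=0$ it uses the affine substitution $g_1=\frac{c_{11}+2}{2}f_1$, $g_2=-\frac{c_{11}}{2}f_1+f_2$, which turns the energy of an \emph{arbitrary} admissible pair (no structural information needed) into $\frac{2}{c_{11}+2}\,\E^{0,0}_{K_{C_N}}(g_1,g_2)$, and then invokes Theorem \ref{00}, so that uniqueness transfers to all minimizers at once. If you want to keep your route, the case $c_{22}=0$ can be patched: after concentration you know $f_1=a_1\chi_{B^{m_1/a_1}}$ up to translation; since the corresponding radial potential $V_1$ is strictly decreasing in $|x|$, one has $|\{V_1=t\}|=0$ for every $t$, and Lemma \ref{corsuper} (in its $c_{22}=0$ version) then determines $f_2$ uniquely as $1-f_1$ on the relevant superlevel set of $V_1$. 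As written, however, this case is missing from your proof.
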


%\vskip -0.3cm
\begin{figure}[h]
\centering
{\def\svgwidth{250pt}
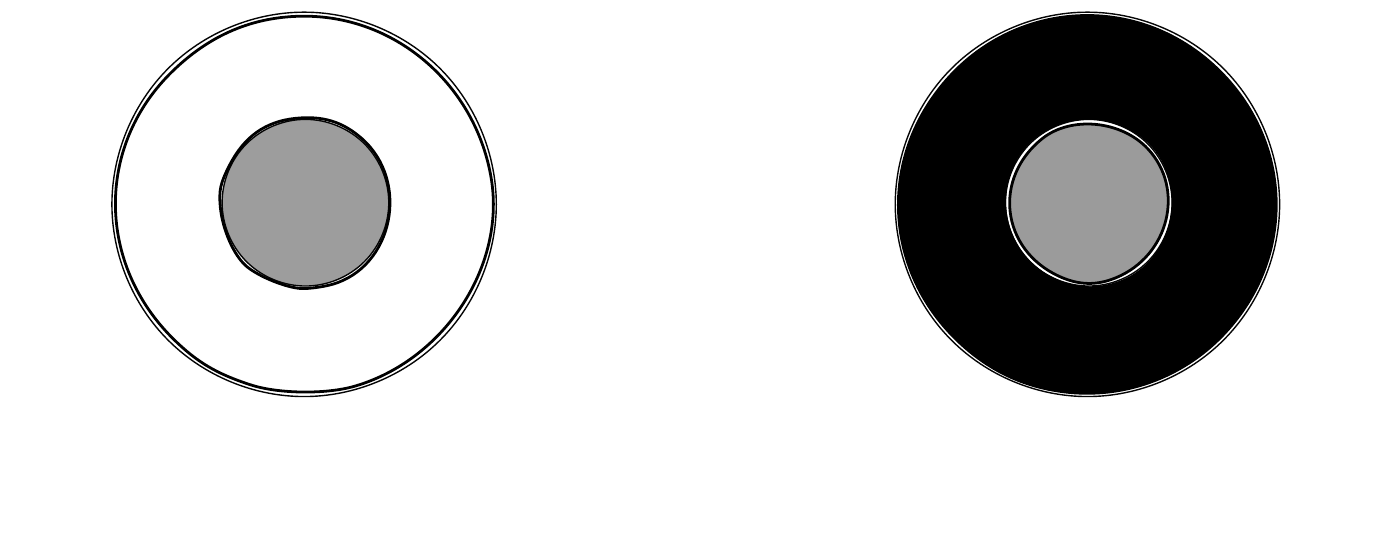}
\vskip -0.2cm
\caption{The phase $f_1$ is black and the phase $f_2$ is white. The mixing of the two phases is represented by the grey color.
The  cartoon on the left represents the unique minimizer in (i). In this case, the two phases mix each other in the inner ball, and the remainig mass of $f_2$ is arranged in an annulus around such ball. In the case (ii), the minimizer has the same form, but replacing $f_2$ (white) with $f_1$ (black).}
\end{figure}

\begin{proof}
We prove only (i) since the proof of (ii) is analogous. Let $(f_1,f_2)$ be a minimizer of $\E^{{c_{11}},{c_{22}}}_{K_{C_N}}$ in $\Am$.
We first notice that, in the case ${c_{22}}<0$, by  i) and ii) of Proposition \ref{armonicofond} we have $f_1 = a_1 \chi_{A}$, $f_2 = a_2 \chi_{A} + \chi_{B}$ for some measurable sets $A, \, B\subset \R^N$. Then, one can argue as in the proof of Corollary \ref{quadr1d} (applying Corollary \ref{triang} instead of Proposition \ref{triangolino1d}). The details are left to the reader. 

It remains to prove the claim for ${c_{22}}=0$. In this case set $\tilde m_1:=\frac{{c_{11}}+2}{2}m_1$ and $\tilde m_2:=-\frac {c_{11}} 2 m_1+m_2$. By assumption $\tilde m_2>\tilde m_1$. Set moreover
\begin{equation}\label{formulag}
g_1:=\frac{{c_{11}}+2}{2}f_1,\qquad g_2:=-\frac{{c_{11}}}{2}f_1+f_2.
\end{equation}
It is easy to see that $g_i\ge 0$,  $\int_{\R^N}g_i(x)\ud x=\tilde m_i$ (for $i=1,2$) and $g_1+g_2=f_1+f_2\le 1$, so that $(g_1,g_2)\in \mathcal A_{\tilde m_1,\tilde m_2}$. Moreover, a straightforward computation yields 
$$
\mathcal E_{K_{C_N}}^{{c_{11}},0}(f_1,f_2)=\frac{2}{{c_{11}}+2}\,\mathcal E_{K_{C_N}}^{0,0}(g_1,g_2),
$$
and hence $(f_1,f_2)$ is a minimizer of $\mathcal E_{K_{C_N}}^{{c_{11}},0}$ in $\Am$ if and only if $(g_1,g_2)$ is a minimizer of $\mathcal E_{K_{C_N}}^{0,0}$ in $ \mathcal A_{\tilde m_1,\tilde m_2}$. By Theorem \ref{00}, the unique (up to a translation) minimizer of $\mathcal E_{K_{C_N}}^{0,0}$ in $ \mathcal A_{\tilde m_1,\tilde m_2}$ is given by $(g_1,g_2)=(\frac 1 2 \chi_{B^{2\tilde m_1}},\frac 1 2 \chi_{B^{2\tilde m_1}}+\chi_{B^{\tilde m_1+\tilde m_2}\setminus B^{\tilde m_1}})$. This, together with \eqref{formulag},  concludes the proof. 
\end{proof}
%%%%%%%%%%%%%%%%%%%%%%%%%%%%%%%%%%%%%%%%%%%%%%%%%%%%%%%%%

\section*{Conclusions and perspectives}

We have studied existence and qualitative properties of minimizers of the energy  
$$
\E_{K}^{c_{11},c_{22}}(f_1,f_2)=c_{11}\,J_{K}(f_1,f_1)+c_{22}\,J_{K}(f_2,f_2)-2 J_{K}(f_1,f_2),
$$
in the class of densities $(f_1,f_2)\in L^{1}(\R^N;[0,1])\times  L^{1}(\R^N;[0,1])$ with fixed masses $m_1,m_2$ and satisfying the constraint $f_1+f_2\le 1$.
We have focused on  the attractive case   $c_{11},c_{22}\le 0$  (the checkerboard region in Figure \ref{figconc}), and proved the existence of a minimizer in this case 
for all the values of masses $m_1,m_2$ (see Theorem \ref{esist}). 
Moreover, for  $0<c_{11}=c_{22}\le 1$, $m_1=m_2$ and $K$  positive definite (the dashed segment in Figure \ref{figconc}), we have proved 
that there exists a minimizer (see Lemma \ref{fact} and Remark \ref{nonex}). 
Finally, for $c_{11},c_{22}\ge 1$ with $\max\{c_{11},c_{22}\}>1$ (grey region in the Figure \ref{figconc}),  the energy $\E_{K}^{c_{11},c_{22}}$ does not admit a minimizer for any pair of values $m_1$ and $m_2$ (see Remark \ref{nonex}). 

\begin{figure}[here]
\centering
{\def\svgwidth{150pt}
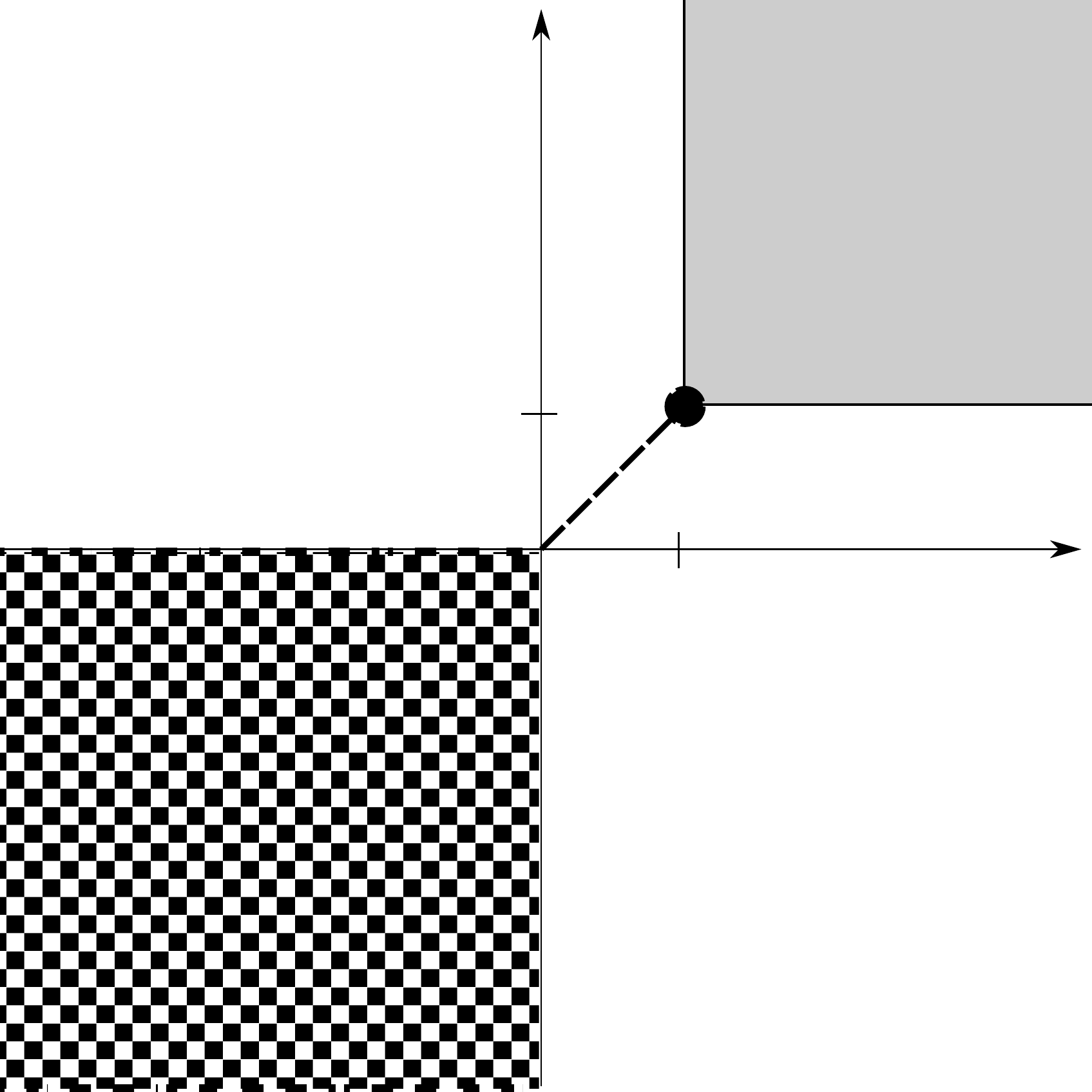}
\caption{Existence/Non existence regions of parameters $c_{11},\, c_{22}$}\label{figconc}
%\caption{The phase $f_1$ is the black one, whereas the phase $f_2$ is white. The grey region represents the mixing of the two  phases. The gradational shaded ball in the central box represents the extremely degenerate character of minimizers for $c_{11}=c_{22}=-1$: in this case, indeed, the minimizers are all the configurations $(f_1,f_2)$ with $f_1+f_2$ equal to the characteristic of the ball with mass $m_1+m_2$.
%}
\end{figure}

A natural question arising from these (partial) results is whether existence of minimizers can be proven in the remaining cases. A general existence result, i.e., independent of the masses, seems to be false if at least one of the coefficients is strictly positive. Indeed,   the corresponding phase would loose some of its (if too large)  mass.  In this case, existence results depending on the masses seems to be an interesting issue. 
% for   , would not be available in the cases $c_{11}\cdot c_{22}<0$ and $c_{11},c_{22}>0$ with $\max\{c_{11},c_{22}\}>1$. 
%For $0<c_{11},c_{22}<1$, existence seems to be a more delicate issue.

A relevant aspect of our analysis is that, for the Coulomb interaction kernel, we have found the explicit shape of minimizers for all choices of negative coefficients, expect when they are both strictly less than $-1$ (see Figure \ref{Figcul}). In this case, we can still say that $f_i$ are characteristic functions of two pairwise disjoint sets. But their specific shape is unknown, and could be analyzed  using numerical methods. 
 \begin{figure}[here]
\centering
{\def\svgwidth{300pt}
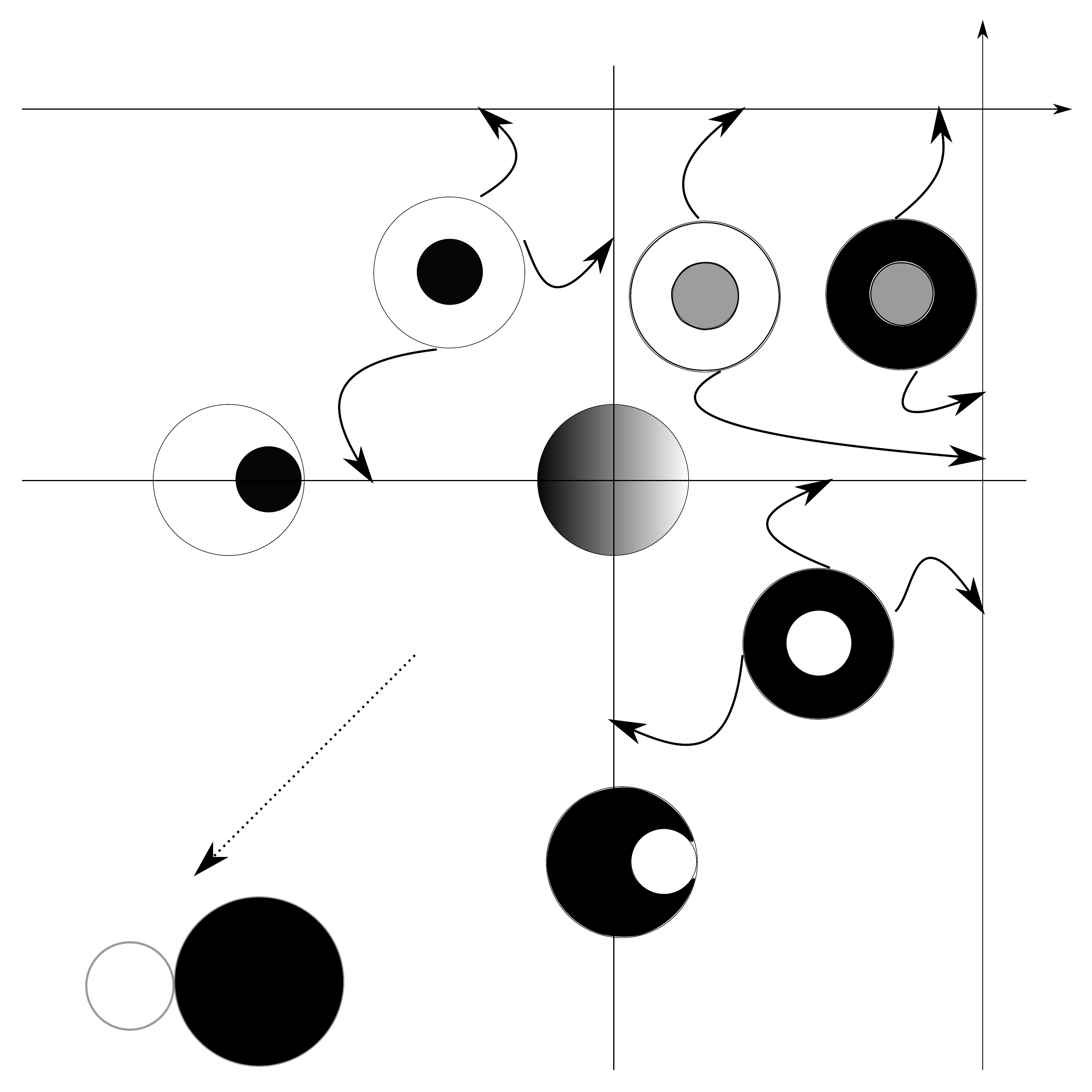}
\caption{Minimizers for Coulomb interactions}\label{Figcul}
%\caption{The phase $f_1$ is the black one, whereas the phase $f_2$ is white. The grey region represents the mixing of the two  phases. The gradational shaded ball in the central box represents the extremely degenerate character of minimizers for $c_{11}=c_{22}=-1$: in this case, indeed, the minimizers are all the configurations $(f_1,f_2)$ with $f_1+f_2$ equal to the characteristic of the ball with mass $m_1+m_2$.
%}
\end{figure}

For general kernels  our analysis is far from being complete. 
Nevertheless, there are many possible generalizations we would like to comment on. 
  
First of all, 
one may study the minimum problem above for some specific kernels that are very used in the context of population dynamics (see for instance \cite{CDFLS,DF} and the references therein) such as,  Gaussian, Morse or power law kernels, or suitable combinations of these ones. Moreover, one might remove the assumption that the cross and self interaction kernels $K_{ij}$ are all multiples of a given $K$. Actually, it would be interesting also to understand whether the improved Riesz inequality established in Corollary \ref{improvedRiesz} holds true for more general kernels.  We notice that this Corollary is equivalent to Theorem \ref{00} once one knows that there is not coexistence of two homogeneous phases, i.e., when $f_i$ are  as in Lemma \ref{solounaomogenea}.

Another interesting direction is the extension of the model to the case of $n$ species, i.e., considering minimizers of functionals of the type

$$
\E_K(f_1,\ldots,f_n):=\sum_{i,j=1}^{n}J_{K_{ij}}(f_i,f_j)
$$
%(or)
%\textcolor{red}{ lascerei le interazioni a due fasi alla volta o no?
%$$
%\E_K(f_1,\ldots,f_n):=\sum_{i}J_{K_{ii}}(f_i,f_i)+\sum_{i_1\neq i_2}J_{K_{ij}}(f_{i_1},f_{i_2})+\ldots+J_{K_{1\, \ldots \, n}}(f_1,\ldots,f_n),
%$$}
under the constraint $\sum_{i=1}^{n}f_i\le 1$ and $\int_{\R^N} f_{i}(x)\ud x=m_{i}$ for $i=1,2,\dots,n$. We believe that some of the techniques developed here could be slightly modified in order to prove existence and some qualitative properties  of the minimizers. 
As already mentioned, the explicit shape of minimizers might require a specific  analysis and could be subject of numerical investigation.

Finally, we point out that our analysis focuses only on the global minimizers of the functional $\Eab$.
Notice that  ground states play a crucial role in the long time asymptotics of nonlinear aggregation-diffusion models. Nevertheless,  the analysis of stationary  states (rather than minimizers) would provide a  better understanding of such problems. In this respect, an interesting analysis would concern   the dynamics of two phases governed  by 
the   energy proposed in this paper. A suitable notion of Wasserstein gradient flow could be considered, in the spirit of \cite{CDFLS,MRSV}.

\end{document}